\documentclass[11pt,leqno]{amsart}
\usepackage{graphicx}
\usepackage{geometry}
\usepackage{indentfirst,csquotes}
\usepackage{setspace}
\usepackage{caption}
\usepackage{subcaption}
\usepackage{relsize}

\usepackage{amssymb,amsthm,amsmath}
\usepackage{xcolor,paralist,hyperref,titlesec,fancyhdr,etoolbox}
\definecolor{mycitecolor}{rgb}{0.0,0.55,0.55}
\hypersetup{
  colorlinks=true,
  linkcolor=red,
  citecolor=mycitecolor,
  urlcolor=red!50!black
}

\newtheorem{theorem}{Theorem}[]

\newtheorem{lemma}[theorem]{Lemma}
\theoremstyle{remark}
\newtheorem{remark}[theorem]{Remark}

\DeclareOldFontCommand{\cal}{\normalfont\calshape}{\mathcal}

\makeatletter
\long\def\@makecaption#1#2{
  \vskip\abovecaptionskip
  \begingroup
  \small
  \setbox\@tempboxa\hbox{\normalfont\bfseries #1. \normalfont #2}
  \ifdim \wd\@tempboxa >\hsize
    \normalfont\bfseries #1. \normalfont #2\par
  \else
    \hbox to\hsize{\hfil\box\@tempboxa\hfil}
  \fi
  \endgroup
  \vskip\belowcaptionskip
}
\makeatother

\makeatletter
\providecommand{\@secnumpunct}{.\quad}
\makeatother

\renewcommand{\footnoterule}{
  \kern -3pt
  \hrule width \textwidth height 0.4pt
  \kern 2.6pt
}

\titleformat{\section}
  {\normalfont\sffamily\bfseries\large}
  {\thesection.}
  {0.5em}
  {}
\titleformat{\subsection}[runin]
  {\normalfont\sffamily\bfseries\normalsize}
  {\thesubsection.}
  {0.5em}
  {}
  [.]
\titlespacing*{\subsection}
  {0pt}
  {1.0ex plus .3ex minus .2ex}
  {0.8em}
  
\makeatletter
\newcommand{\mykeywords}[1]{\gdef\@mykeywords{#1}}
\gdef\@mykeywords{}
\newcommand{\myabstract}[1]{\gdef\@myabstract{#1}}
\gdef\@myabstract{}

\def\@maketitle{%
  \normalfont\normalsize
  \begin{center}
    {\sffamily\bfseries\large
    \@title\par}
    \vskip 20pt
    {\sffamily\normalsize
    \authors\par}
    \vskip 15pt
    {\color{gray}\rule{\textwidth}{3pt}\par}
  \end{center}
  \ifx\@myabstract\@empty
  \else
    \par\vspace{0.5em}
    \noindent{\sffamily\bfseries Abstract.} \@myabstract\par
    \vspace{0.5em}
  \fi
  \ifx\@mykeywords\@empty
  \else
    \par\vspace{0.5em}
    \noindent{\sffamily\bfseries Keywords.} \@mykeywords\par
  \fi
}
\makeatother

\makeatother

\usepackage{lipsum}

\usepackage{amsmath,amsfonts,bm}

\def\eqref#1{(\ref{#1})}

\def\1{\bm{1}}

\def\vv{{\bm{v}}}

\DeclareMathAlphabet{\mathsfit}{\encodingdefault}{\sfdefault}{m}{sl}
\SetMathAlphabet{\mathsfit}{bold}{\encodingdefault}{\sfdefault}{bx}{n}

\newcommand{\x}{\mathbf{x}}
\newcommand{\e}{\mathbf{e}}
\newcommand{\y}{\mathbf{y}}
\newcommand{\z}{\mathbf{z}}
\newcommand{\uu}{\mathbf{u}}
\newcommand{\bfv}{\mathbf{v}}

\newcommand{\p}{\mathbf{p}}

\newcommand{\I}{\mathbf{I}}
\newcommand{\X}{\mathbf{X}}
\newcommand{\Y}{\mathbf{Y}}
\newcommand{\Z}{\mathbf{Z}}
\newcommand{\A}{\mathbf{A}}
\newcommand{\U}{\mathbf{U}}
\newcommand{\V}{\mathbf{V}}
\newcommand{\C}{\mathbf{C}}
\newcommand{\Q}{\mathbf{Q}}

\newcommand{\B}{\mathbf{B}}
\newcommand{\M}{\mathbf{M}}

\newcommand{\D}{\mathbf{D}}
\newcommand{\G}{\mathbf{G}}

\newcommand{\PP}{\mathbf{P}}
\newcommand{\bfS}{\mathbf{S}}
\newcommand{\E}{\mathbf{E}}

\newcommand{\bfO}{\mathbf{O}}
\newcommand{\bfPhi}{\mathbf{\Phi}}
\newcommand{\bfPsi}{\mathbf{\Psi}}
\newcommand{\bfSigma}{\mathbf{\Sigma}}
\newcommand{\bfOmega}{\mathbf{\Omega}}
\newcommand{\bfTheta}{\mathbf{\Theta}}
\newcommand{\bfLambda}{\mathbf{\Lambda}}

\newcommand{\bfXi}{\mathbf{\Xi}}

\newcommand{\rank}{\mathsf{rank}}

\newcommand{\tr}{\mathsf{Tr}}
\newcommand{\fro}{\mathsf{F}}
\newcommand{\st}{\mathsf{St}}
\newcommand{\gr}{\mathsf{Gr}}

\newcommand{\diag}{\mathsf{diag}}

\newcommand{\Span}{\mathsf{span}}
\newcommand{\range}{\mathsf{range}}

\DeclareMathOperator*{\argmin}{arg\,min}

\usepackage{algorithm}
\usepackage{algpseudocode}
\usepackage{booktabs}
\usepackage{pifont}
\usepackage{subcaption}
\usepackage{wrapfig}
\usepackage{xr}

\algrenewcommand\algorithmiccomment[1]{\hfill\textit{// #1}}

\begin{document}
\pagestyle{plain}
{\fontfamily{lmr}\selectfont\title{Direction–Magnitude Decomposition for Low-Rank Matrix Optimization:\\Faster Convergence and Saddle-to-saddle Dynamics}}

\author[Y. Wei, L. Zhang, B. Li, N. He]{Yudong Wei\textsuperscript{\dag} \quad Liang Zhang\textsuperscript{\ddag} \quad Bingcong Li\textsuperscript{\ddag} \quad Niao He\textsuperscript{\ddag}}

\thanks{\textsuperscript{\dag} Department of Mathematics, ETH Zurich, Zurich, Switzerland (\texttt{yudwei@ethz.ch}).}

\thanks{\textsuperscript{\ddag} Department of Computer Science, ETH Zurich, Zurich, Switzerland (\texttt{liang.zhang@inf.ethz.ch}, \\\texttt{bingcong.li@inf.ethz.ch}, \texttt{niao.he@inf.ethz.ch}).}

\myabstract{\small
    Low-rank matrix optimization is often carried out via the Burer–Monteiro (BM) formulation, but choosing the factorization rank $r$ is delicate and can substantially slow optimization. We propose a unified framework, termed direction–magnitude decomposition (DMD), that decomposes the optimization variable to improve optimization efficiency even when the target rank is unknown. We develop two DMD-based approaches and establish their theoretical advantages on the canonical problem of matrix factorization. The first, overparameterized DMD, uses a rank $r$ larger than necessary and enjoys faster convergence as $r$ increases. The second, recursive DMD, is motivated by the incremental eigenpair learning, or saddle-to-saddle, behavior of overparameterized DMD. It achieves lower memory and computational costs, complementing overparameterized DMD. Both approaches are exponentially faster than gradient descent applied to the BM formulation. Numerical experiments on matrix factorization, sensing, and completion corroborate our theoretical findings and demonstrate the practical effectiveness of DMD.}

\mykeywords{\small low-rank matrix optimization, Riemannian optimization, saddle-to-saddle dynamics, \\direction-magnitude decomposition}

\maketitle

\begingroup
\renewcommand{\thefootnote}{\fnsymbol{footnote}}
\footnotetext[2]{Department of Mathematics, ETH Zurich, Zurich, Switzerland (\texttt{yudwei@ethz.ch}).}

\footnotetext[3]{Department of Computer Science, ETH Zurich, Zurich, Switzerland (\texttt{liang.zhang@inf.ethz.ch},\\
\texttt{bingcong.li@inf.ethz.ch}, \texttt{niao.he@inf.ethz.ch}).}
\endgroup

\section{Introduction}
Low-rank matrix optimization has attracted significant interest in machine learning, signal processing, and computer vision, with applications including collaborative filtering \cite{schafer2007collaborative, he2017neural}, graph learning \cite{xia2021graph, wang2021graph}, imaging science \cite{horstmeyer2015solving, barrett2013foundations}, and fine-tuning foundation models \cite{hu2022lora,li2026low}. In many such problems, the decision variable is a positive semidefinite (PSD) matrix $\A \in \mathbb{S}_+^{m}$ whose global optimum is known to be low rank. To exploit this low-rank structure, a widely adopted approach is the Burer–Monteiro (BM) formulation \cite{burer2003nonlinear}, which parameterizes $\A$ as $\A = \Y\Y^\top$ with a factor matrix $\Y \in \mathbb{R}^{m \times r}$ and $r < m$.
As a canonical example, this work focuses on the matrix factorization problem \cite{chi2019nonconvex, zhuo2024computational, li2019landscape, lyu2020online}:
\begin{align}\label{prob_bm}
    \min_{\Y \in \mathbb{R}^{m \times r}} \; \frac{1}{4} \| \Y \Y^\top - \A \|_\fro^2.
\end{align}
Although simple in form, \eqref{prob_bm} is a fundamental problem
that underlies many practical settings, including matrix sensing \cite{li2018algorithmic, zhong2015efficient, tong2021accelerating, shen2026escaping} and matrix completion \cite{candes2012exact, keshavan2010matrix, ongie2021tensor, eriksson2012high}. 

When the rank of the global minimizer $r_A:=\rank(\A)$ is known, one could choose $r=r_A$ for parameterizing $\Y$.
In practice, however, this rank is unknown a priori, and the most common approach is to adopt an overparameterized formulation with $r>r_A$ to ensure sufficient expressiveness. However, overparameterized BM formulation can be less efficient from an optimization standpoint. Recent results show that overparameterization can significantly hinder convergence and can even lead to exponential slowdowns \cite{xiong2024how}. The technical reason for this is that the variable $\Y$ jointly encodes both direction and magnitude information, which can interact unfavorably during optimization. 

To address this inefficiency, we develop a unified framework, termed direction–magnitude decomposition (DMD), that provides two approaches for improving optimization efficiency when the true rank $r_A$ is unknown. DMD remains memory efficient as BM formulation, but achieves faster convergence by decomposing the variable $\Y$ into direction and magnitude components and treating them separately.

The first approach, \textit{overparameterized DMD}, parameterizes both magnitude and direction components with additional parameters, i.e., $r>r_A$. Unlike overparameterized BM, however, our approach turns overparameterization into an advantage of optimization, i.e., larger $r$ leads to faster convergence.
We further establish several additional merits of DMD. First, it yields an exponentially faster convergence rate than gradient descent (GD) applied to the classical BM formulation. Second, by introducing a surrogate loss for the directional variable, we improve the $\kappa$-dependence of the iteration complexity from ${\cal O}(\kappa^4)$ to ${\cal O}(\kappa^2)$, where $\kappa$ is the condition number of $\A$. A detailed comparison with existing algorithms is given in Table \ref{tab:comparison}.

\begin{table}[t]
\caption{\small Comparison with existing algorithms for low-rank matrix factorization. ``EP", ``OP" and ``RE" stand for exactparameterized, overparameterized and recursive, respectively.\protect\footnotemark}
\vspace{1em}
\centering
\small
\renewcommand{\arraystretch}{1.7}
\begin{tabular*}{\textwidth}{@{\extracolsep{\fill}} c c c c c}
\toprule
\textbf{Formulation}  & \textbf{Algorithm} & \textbf{Iteration Complexity} & \textbf{Faster with OP}\\
\midrule
\eqref{prob_bm} 
& GD (EP)\cite{ye2021global}
&$\mathcal{O}\big(m^2r_A^4\kappa^4\!+\!m^2r_A^4\kappa^4\log(\frac{1}{\varepsilon\kappa})\big)$ & \ding{55}\\

\eqref{prob_bm}
& GD (OP)\cite{xiong2024how}
& $\Omega\big(\frac{\kappa^2}{\log(m r_A^2)\, \varepsilon}\big)$ & \ding{55}\\

\eqref{prob_dmd}
& RGD\cite{lion2025polar} 
& $\mathcal{O}\big(
\frac{m^4 r^3 r_A \kappa^4}{(r - r_A)^4}\!+\!
\frac{m^3 r^3 \kappa^4}{(r - r_A)^4}
\log(\frac{1}{\varepsilon})
\big)$ & \ding{51}\\

\midrule

\eqref{prob_dmd}
& OP DMD
& $\mathcal{O}\big(\frac{m^2r^2r_A^2\kappa^2}{(r-r_A)^4}\!+\!\frac{mrr_A\kappa^2}{(r-r_A)^2}\log(\frac{1}{\varepsilon})\big)$ & \ding{51}\\

\eqref{prob_dmd}
& RE DMD
& $\mathcal{O}\big(\kappa(r_A\log(m)+r_A^2)+r_A\kappa\log(\frac{1}{\varepsilon})\big)$ & --\\

\bottomrule
\end{tabular*}
\label{tab:comparison}
\end{table}
\footnotetext{OP DMD and RE DMD focus on symmetric objective matrices, whereas other algorithms in the comparison are developed for asymmetric objective matrices. RE DMD requires a separated spectrum assumption; see Section \ref{sec:comparison_op_re} for details.}

The second approach, termed \textit{recursive DMD}, is motivated by a detailed characterization of the optimization trajectory of overparameterized DMD. Specifically, we show that the trajectory passes sequentially through a series of saddle points, exhibiting a saddle-to-saddle behavior in which each saddle is associated with an eigenpair\footnote{An eigenpair $(\lambda,\uu)$ of a matrix $\A\in\mathbb{R}^{m\times m}$ consists of $\lambda\in\mathbb{R}$ and a nonzero $\uu\in\mathbb{R}^m$ such that $\A\uu=\lambda\uu$. In this work, we assume $\|\uu\|=1$ and refer to $(\lambda,\uu)$ as the leading eigenpair when $\lambda$ is the largest eigenvalue.} of the target matrix. 
This sequential eigen-learning perspective naturally suggests an alternative to overparameterization: one may use a rank-1 DMD parameterization to recursively extract leading eigenpairs until the full matrix is recovered. We observe a tradeoff between overparameterized and recursive DMD: while overparameterization is more generally applicable, recursive DMD can be more efficient when the eigengaps of $\A$ are favorable, requiring less memory and computation.

In summary, DMD offers a memory-efficient framework for low-rank matrix optimization. It does not require prior knowledge of $r_A$ and enjoys favorable optimization properties. We establish theoretical guarantees for matrix factorization and further demonstrate its broader applicability through numerical experiments on matrix sensing and matrix completion. Our main contributions are as follows.

\begin{enumerate}
    \item[\ding{118}] \textbf{Overparameterized DMD.} When parameterizing the direction and magnitude variables with overparameterization, we prove that DMD yields several improvements on iteration complexity. The dependence of the optimality error $\varepsilon$ is improved to ${\cal O}(\log (1/\varepsilon))$ compared with ${\cal O}(1/\varepsilon)$ in BM formulation. Compared to \cite{lion2025polar}, our complexity improves the $\kappa$-dependence to ${\cal O}(\kappa^2)$ rather than ${\cal O}(\kappa^4)$. Moreover, the complexity also inversely decreases as the level of overparameterization $r$ increases. A more detailed comparison on the iteration complexities can be found in Table \ref{tab:comparison}.

\item[\ding{118}] \textbf{Saddle-to-saddle dynamics.} Overparameterized DMD exhibits saddle-to-saddle dynamics, successively entering and escaping neighborhoods of $r_A$ saddle points before globally converging. Each transition from one saddle point to the next can be interpreted as learning a leading eigenpair of the objective matrix. 

\item[\ding{118}] \textbf{Recursive DMD.} 
Motivated by the saddle-to-saddle dynamics, we show that overparameterization is not always necessary to handle an unknown rank $r_A$. As an alternative, recursive DMD extracts one leading eigenpair of the objective matrix at each step, and progressively recovers the low-rank factorization. It enjoys lower memory and computational cost per iteration, while serving as an efficient complement to overparameterized DMD under favorable spectral conditions.

\item[\ding{118}] \textbf{Empirical validation.} We conduct extensive numerical experiments on representative low-rank matrix optimization problems, including matrix factorization, sensing, and completion. The results validate our theoretical findings, demonstrating the practical effectiveness of both overparameterized DMD and recursive DMD.
\end{enumerate}
\vspace{1em}

\textbf{Notational conventions.} Bold uppercase (lowercase) letters denote matrices (column vectors); $\tr(\cdot)$, $(\cdot)^\top$ and $\|\cdot\|_\fro$ refer to the trace, transpose and Frobenius norm of a matrix; $\|\cdot\|$ denotes the spectral ($\ell_2$) norm for matrices (vectors); $\sigma_i(\cdot)$ denotes the $i$-th largest singular value of a matrix and $\lambda_i(\cdot)$ denotes the $i$-th largest eigenvalue of a matrix. Moreover, $\mathbb{S}^m$ and $\mathbb{S}_+^m$ denote symmetric and positive semidefinite (PSD) matrices of size $m \times m$, respectively.

\section{Related work}
\label{sec:related_works}

\textbf{Low-rank matrix optimization.} Low-rank matrix optimization has attracted significant attention due to its broad applications ranging from phase retrieval to collaborative filtering \cite{candes2013phaselift,duchi2020conic, schafer2007collaborative,Srebro2010collaborative, udell2019big}. As a canonical problem, matrix factorization is popular in theoretical studies because of the intricate geometry of the associated loss landscapes, which contain numerous saddle points and can fail to satisfy global smoothness or a global Polyak–Łojasiewicz (PL) condition \cite{ge2017no,ma2023over}. 
In the exactparameterized setting, \cite{ye2021global} establishes linear convergence of GD applied to the BM formulation. However, the recent work \cite{xiong2024how} has shown that overparameterization may lead to an exponential slowdown in convergence. \cite{tong2021accelerating, xu2023power} introduce scaled update schemes with quasi-Newton characteristics and obtain linear convergence with mild $\kappa$-dependence in iteration complexity. In contrast, DMD is a purely first-order method that still achieves linear convergence.
The decomposition of direction and magnitude is partially inspired from \cite{li2024crucial}, which shows prior knowledge for the directions in BM formulation can induce exponentially faster convergence. 
Closely related to our setting, \cite{lion2025polar} considers a factorized formulation. Compared with their approach, our method adopts a fully decomposed manner and achieves an improved dependence on $\kappa$. Beyond matrix factorization, a large body of work has studied closely related low-rank matrix recovery problems, including matrix sensing and completion, which share the same goal of recovering a low-rank matrix but differ in their observation models, from full observations to linear measurements and partial observations \cite{tong2021accelerating,charisopoulos2021low,ding2024flat, tan2011rank, tropp2017practical, zhao2020matrix, shen2026escaping, stoger2021small}. For matrix sensing, prior work \cite{tan2011rank} studies finite-field low-rank recovery from random linear measurements and characterizes the information-theoretic limits for exact recovery. The convergence of GD under  small random initialization is studied in \cite{stoger2021small}.
For matrix completion, another line of work \cite{zhao2020matrix} formulates data recovery from a probabilistic perspective, using low-rank Gaussian copula models to provide both imputations and uncertainty estimates for missing entries. In this paper, we extend both overparameterized DMD and recursive DMD to matrix sensing and completion, and demonstrate their effectiveness through numerical experiments.

\textbf{Other parameterizations for low-rank optimization.} Several recent works have also begun to rethink the classical BM formulation. It is shown in \cite{ma2023over} that, for rank-one matrix sensing, a tensorized lift of the classical BM formulation enjoys a more favorable optimization landscape than vanilla BM. \cite{levin2025effect} shows that different parameterizations can lead to different strict saddles, while \cite{wei2025benefits} shows that parameterizations with the same expressiveness can induce significantly different optimization behavior. Our work provides further evidence that parameterization and optimization should be co-designed.
 
\textbf{Saddle-to-saddle dynamics.}
Saddle-to-saddle dynamics aligned with an incremental learning paradigm\footnote{Also known as deflation; see, e.g., \cite{ge2021understanding,anandkumar2014tensor,seddik2023optimizing, zhang2025saddle}.} has been observed for GD applied to the BM formulation \cite{li2020towards, jiang2023algorithmic, cao2022dynamics}. As shown in \cite{jiang2023algorithmic}, GD sequentially recovers the principal components of the objective matrix when started from a sufficiently small random initialization. Each stage of this process corresponds to the trajectory moving between saddle points associated with different ranks, giving rise to a characteristic saddle-to-saddle dynamics. Beyond matrix factorization, similar phenomena have also been observed in matrix sensing. The work \cite{jin2023understanding} establishes a detailed characterization of incremental learning dynamics for GD and demonstrates that the sequential recovery of spectral components persists even in underparameterized settings. In this paper, we characterize the saddle-to-saddle dynamics of overparameterized DMD. Owing to the explicit decomposition into direction and magnitude components, our analysis admits a more transparent geometric interpretation. More importantly, we show that the sequential learning of eigenpairs in DMD does not fundamentally rely on overparameterization. In particular, a rank-one DMD parameterization suffices to recover the leading eigenpair. 
This observation naturally leads to a recursive DMD approach for reducing both memory and computational complexity while preserving the favorable convergence behavior.

\textbf{Riemannian optimization.} Riemannian optimization is naturally connected to DMD for learning the direction variable, which is constrained to lie on the Stiefel manifolds \cite{tagare2011notes, lee2025equivariant, gao2021riemannian, chen2021decentralized}. By exploiting the underlying geometric structure, Riemannian optimization extends gradient-based methods to optimization problems with smooth manifold constraints. A well-established theoretical and algorithmic framework has been developed for such problems, including retraction-based updates, convergence guarantees, and efficient implementations; see, e.g., \cite{absil2008optimization,smith2014optimization,mishra2012riemannian,boumal2023introduction, hou2020analysis, park2026adaptive, li2026convergence}. The work \cite{li2026convergence} analyzes block majorization-minimization methods for constrained block-Riemannian optimization and establishes asymptotic convergence to stationary points for nonsmooth nonconvex objectives. Another recent work \cite{park2026adaptive} develops an adaptive Riemannian gradient method, which adopts a line-search-free adaptive stepsize rule and establishes a non-ergodic sublinear rate under local geodesic smoothness. In this work, we adopt a surrogate loss for the direction variable. The update performs a Riemannian gradient step on the surrogate objective, followed by a retraction back to the Stiefel manifold, and is provably faster than directly relying on the original objective.

\section{Direction-magnitude decomposition}
\label{sec:problem_reformulation}
Overparameterization, i.e., choosing $r>r_A$, can substantially slow down GD on \eqref{prob_bm}. In particular, it was shown in \cite{xiong2024how} that the reconstruction error $\|\Y_t\Y_t^\top-\A\|_\fro$ of GD cannot decay faster than $\Omega(1/t)$, where $t$ denotes the iteration number. This sublinear rate is \textit{exponentially} slower than the linear one achieved in the exactparameterized regime, where the true rank $r_A$ is known and $r=r_A$ is employed \cite{ye2021global}.

\subsection{DMD reformulation}
To mitigate the slowdown in convergence caused by overparameterization, we introduce a direction–magnitude decomposition framework induced by the polar decomposition; see Section 9.4.3 of \cite{golub2013matrix} for more details. Specifically, for any matrix $\Y\in\mathbb{R}^{m\times r},m\geq r$, its polar decomposition can be written as
\[
    \Y = \X \tilde{\bfTheta},
    \qquad \X\in\st(m,r),\,
    \tilde{\bfTheta}\in \mathbb{S}_{+}^r,
\]
where $\st(m,r):=\{\X\in\mathbb{R}^{m\times r}|\X^\top\X=\I_r\}$ denotes the Stiefel manifold and $\tilde{\bfTheta}$ is a PSD matrix. Geometrically, $\X$ is an orthonormal basis spanning an $r$-dimensional subspace, thereby encoding the direction component of $\Y$, while $\tilde{\bfTheta}$ captures its magnitude information. Substituting this factorization into \eqref{prob_bm}, we obtain
\begin{align}
    \min\limits_{\X\in\st(m,r),\tilde{\bfTheta}\in\mathbb{S}_+^r}\frac{1}{4}\|\X\tilde{\bfTheta}\tilde{\bfTheta}^\top\X^\top-\A\|_\fro^2.
\end{align}

This problem can be further simplified by absorbing the product $\tilde{\bfTheta}\tilde{\bfTheta}^\top$ into a single matrix $\bfTheta\in\mathbb{S}_+^r$ and relaxing the PSD constraint on $\bfTheta$ to only symmetry, i.e., $\bfTheta\in\mathbb{S}^r$. Notably, in the overparameterized regime, this relaxation preserves the same global optimum as the original formulation, while largely improving computational efficiency by avoiding costly operations, such as SVDs or matrix exponentials needed for optimization over PSD cones \cite{vandenberghe1996semidefinite, todd2001semidefinite, yurtsever2021scalable}.

With this direction-magnitude decomposition and the aforementioned simplification, we arrive at the following optimization problem:
\begin{align}\label{prob_dmd}
    \min\limits_{\X\in\st(m,r),\bfTheta\in\mathbb{S}^r}f(\X,\bfTheta):=\frac{1}{4}\|\X\bfTheta\X^\top-\A\|_\fro^2.
\end{align}
Similar reformulations have appeared in \cite{levin2025effect, wei2025benefits}. The former tackles local geometry around stationary points, while the latter establishes faster convergence rates for overparameterized matrix sensing problems. Our work differs by solving \eqref{prob_dmd} in a fully direction–magnitude decomposed manner, as detailed in the next subsection.

\subsection{DMD optimization}
The key idea behind our algorithm is a separation of optimality in \eqref{prob_dmd}. In particular, the optimal $\X$ identifies the eigenspace of $\A$ regardless of whether $\bfTheta$ is optimal. Noticing that this eigenspace can be learned from a closely related problem, principal component analysis (PCA) \cite{abdi2010principal, balzano2018streaming, greenacre2022principal}, we use PCA as a surrogate objective for learning $\X$.

Recall the Ky Fan's characterization of PCA \cite{fan1949theorem}. For a PSD matrix $\A\in\mathbb{S}_+^{m}$, the leading $r$-dimensional eigenspace can be obtained by minimizing $\frac{1}{4}\|\X\X^\top-\A\|_\fro^2$ over $\X\in\st(m,r)$. To handle the manifold constraint on the direction variable $\X$, we optimize it with Riemannian gradient descent. Denote  $\hat{\G}_t=(\X_t\X_t^\top-\A)\X_t$ as the Euclidean gradient (w.r.t. $\X_t$) of the PCA objective, the Riemannian gradient is thus
\begin{align}\label{rgd_X}
    \G_t:=(\I_m-\X_t\X_t^\top)\hat{\G}_t+\frac{1}{2}\X_t(\X_t^\top\hat{\G}_t-\hat{\G}_t^\top\X_t).
\end{align}
It can be seen that \eqref{rgd_X} is independent of $\bfTheta$. In other words, the optimization of the direction is fully decoupled from that of the magnitude. To ensure feasibility after each update, we further apply a polar retraction\footnote{Let $\X\in\st(m,r)$ and a point in its tangent space $\G\in {\cal T}_\X\st(m,r)$. The polar retraction for $\X + \G$ is given by $\mathcal{R}_\X(\G) = (\X + \G)(\I_r +  \G^\top \G)^{-1/2}$.}, leading to the following update rule:
\begin{align}\label{update_X}
    \X_{t+1}=(\X_t-\eta\G_t)(\I_r+\eta^2\G_t^\top\G_t)^{-1/2},
\end{align}
where $\eta>0$ denotes the stepsize.

For the magnitude variable $\bfTheta$, we employ GD on the original objective \eqref{prob_dmd} with stepsize $\mu>0$, given by
\begin{align}\label{update_theta}
    \bfTheta_t=(1-\frac{\mu}{2})\bfTheta_{t-1}+\frac{\mu}{2}\X_t^\top\A\X_t.
\end{align}
This update preserves the symmetry of $\bfTheta_t,t\geq0$ throughout the iterations.

In summary, the step-by-step procedure for solving \eqref{prob_dmd} is summarized in Algorithm~\ref{alg_DMD}. For convenience, we refer to this algorithm as overparameterized DMD.

\begin{algorithm}[t]
\caption{Overparameterized DMD for solving \eqref{prob_dmd}}\label{alg_DMD}
\begin{algorithmic}[1]
\State \textbf{Input:} Initial point $\X_0\in\st(m,r)$, stepsizes $\eta, \mu$, and number of iterations $T$
\For{$t = 0, 1, \ldots, T$}
    \State Update the magnitude variable $\bfTheta_t$ via \eqref{update_theta}
    \State Obtain the surrogate Riemannian gradient $\G_t$ via \eqref{rgd_X}
    \State Update the direction variable $\X_{t+1}$ via \eqref{update_X}
\EndFor
\State \textbf{Output:} $\X_T$, $\bfTheta_T$
\end{algorithmic}
\end{algorithm}

\section{Benefits of overparameterized DMD}
\label{sec:benefits_DMD}
We consider solving \eqref{prob_dmd} under random initialization, where the initial direction variable $\X_0$ is drawn uniformly random from the Stiefel manifold $\st(m,r)$. In practice, this can be generated as $\X_0=\Z_0(\Z_0^\top\Z_0)^{-1/2}$, where the entries of $\Z_0\in\mathbb{R}^{m\times r}$ are i.i.d. Gaussian random variables $\mathcal{N}(0,1)$ \cite{chikuse2012statistics}. 

Recall that the rank of $\A$ is denoted as $r_A$. Let the compact SVD of $\A$ be $\A=\U\bfSigma\U^\top$, where $\U\in\mathbb{R}^{m\times r_A}$ and $\bfSigma\in\mathbb{S}_+^{r_A}$. Without loss of generality, we assume that $\sigma_1(\bfSigma)=1$ and $\sigma_{r_A}(\bfSigma)=1/\kappa$ with $\kappa>1$ denoting the condition number.

\begin{theorem}\label{theorem_DMD}
    Consider solving the matrix factorization problem \eqref{prob_dmd} initialized with a random $\X_0 \in \st(m,r),r>r_A$.
    Suppose that $r_A \le \frac{m}{2}$. Algorithm \ref{alg_DMD} using stepsizes $\eta=\mathcal{O}(\frac{(r-r_A)^2}{mrr_A\kappa})$ and $\mu=2$ generates a sequence $\{\X_t,\bfTheta_t\}_{t=0}^{\infty}$. With high probability over the initialization, this sequence satisfies that for any $\varepsilon\in(0,1)$, we have $f(\X_{t_\varepsilon},\bfTheta_{t_\varepsilon})\leq\varepsilon$ after at most $t_\varepsilon=\mathcal{O}\big(\frac{m^2r^2r_A^2\kappa^2}{(r-r_A)^4}+\frac{mrr_A\kappa^2}{(r-r_A)^2}\log(\frac{1}{\varepsilon})\big)$ iterations.
\end{theorem}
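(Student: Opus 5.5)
With $\mu=2$ the magnitude update \eqref{update_theta} reads $\bfTheta_t=\X_t^\top\A\X_t$. This is the exact minimizer of $f(\X_t,\cdot)$ over $\mathbb{S}^r$. Substituting it gives
\[
f(\X_t,\bfTheta_t)=\tfrac14\|\A-\PP_t\A\PP_t\|_\fro^2, \qquad \PP_t:=\X_t\X_t^\top .
\]
Since $\A=\U\bfSigma\U^\top$, we have $\|\A-\PP\A\PP\|_\fro^2\le c\,\|(\I-\PP)\U\|_\fro^2$ for an absolute constant $c$ (recall $\|\bfSigma\|=1$). The whole proof therefore reduces to showing that the direction iterates $\X_t$, which are generated independently of $\bfTheta$, capture the column space of $\U$. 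Concretely, I would track
\[
\bfS_t:=\U^\top\X_t\in\mathbb{R}^{r_A\times r},\qquad \text{with target}\quad \sigma_{r_A}(\bfS_t)\to1,
\]
and show that $\delta_t:=\|(\I-\PP_t)\U\|_\fro^2=r_A-\|\bfS_t\|_\fro^2$ is at most $\varepsilon$ (up to constants).

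\textbf{Step 1: initialization.} The matrix $\X_0$ is Haar distributed on $\st(m,r)$ with $r>r_A$. By standard smallest-singular-value bounds for rectangular Gaussian matrices (Rudelson--Vershynin type) applied to $\U^\top\Z_0\in\mathbb{R}^{r_A\times r}$, together with $\|\Z_0\|\lesssim\sqrt m$ (using $r_A\le m/2$), with high probability
\[
\sigma_{r_A}(\bfS_0)\gtrsim\frac{r-r_A}{\sqrt{m r}} .
\]
This is where the factors $(r-r_A)$ enter, and where overparameterization helps.

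\textbf{Step 2: one-step recursion.} For the surrogate PCA objective I would expand the update \eqref{update_X}. One has $\X^\top\hat\G=\bfI-\X^\top\A\X$ symmetric up to the $\X^\top\X$ term, so $\G_t=-(\I-\PP_t)\A\X_t$ (the skew part vanishes). The retraction factor is $(\I+\eta^2\G_t^\top\G_t)^{-1/2}=\I-O(\eta^2)$. This gives
\[
\bfS_{t+1}=\bigl(\bfS_t+\eta\,\bfSigma\,\bfS_t-\eta\,\bfS_t\bfS_t^\top\bfSigma\bfS_t\bigr)\bigl(\I+O(\eta^2)\bigr).
\]
From this I would derive a lower bound for the smallest eigenvalue of $\bfS_{t+1}\bfS_{t+1}^\top$ of the form
\[
\lambda_{\min}(\bfS_{t+1}\bfS_{t+1}^\top)\ge\lambda_{\min}(\bfS_t\bfS_t^\top)\Bigl(1+\tfrac{2\eta}{\kappa}\bigl(1-\lambda_{\min}(\bfS_t\bfS_t^\top)\bigr)-O(\eta^2)\Bigr).
\]
The stepsize restriction $\eta\lesssim(r-r_A)^2/(m r r_A\kappa)$ is chosen so that the $O(\eta^2)$ retraction and cross terms are dominated by the linear gain even when $\lambda_{\min}$ is as small as in Step 1.

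\textbf{Step 3: two phases.} In Phase I, $\lambda_{\min}(\bfS_t\bfS_t^\top)$ grows geometrically at rate $1+\eta/\kappa$ from $(r-r_A)^2/(mr)$ up to $1/2$. A logistic-type argument gives a phase length of roughly
\[
\frac{\kappa}{\eta}\cdot\frac{mr}{(r-r_A)^2}\;\lesssim\;\frac{m^2r^2r_A\kappa^2}{(r-r_A)^4}.
\]
The extra $r_A$ factor appears because the Frobenius-type control is needed to keep the top directions from degrading. In Phase II, once $\lambda_{\min}\ge1/2$, one obtains local contraction
\[
\delta_{t+1}\le\Bigl(1-\frac{c\,\eta}{\kappa}\Bigr)\delta_t ,
\]
so the phase takes $\frac{\kappa}{\eta}\log(1/\varepsilon)=O\bigl(\frac{mrr_A\kappa^2}{(r-r_A)^2}\log\frac1\varepsilon\bigr)$ steps. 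Adding the two phases and using the reduction above yields $t_\varepsilon$.

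\textbf{Main obstacle.} The hardest step is Phase I. The recursion for $\bfS_t$ is not diagonal: larger eigen-directions, aligned quickly at rate $\eta$, couple with the slow direction at rate $\eta/\kappa$ through the term $\bfS_t\bfS_t^\top\bfSigma\bfS_t$. One must also show that the smallest singular value never decreases despite the retraction's second-order loss. My first attempt would be to work with $\bfS_t\bfS_t^\top$ and show that it is monotone in Loewner order. This is done by writing
\[
\bfS_{t+1}\bfS_{t+1}^\top\succeq(\I+\eta\bfSigma)\,\bfS_t\bfS_t^\top\,(\I+\eta\bfSigma)-2\eta\,\bfS_t\bfS_t^\top\bfSigma\,\bfS_t\bfS_t^\top-O(\eta^2)
\]
and applying Weyl's inequality to each of the three terms. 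If that fails, the fallback is to control the potential $\log\det(\bfS_t\bfS_t^\top)$ instead, which naturally picks up the $r_A$ factor in the rate.
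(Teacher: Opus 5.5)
\textbf{Verdict.} Your reduction to $\delta_t$, the initialization bound, and the exact recursion $\bfS_{t+1}=\B_t\bfS_t(\I+\eta^2\G_t^\top\G_t)^{-1/2}$ all agree with the paper. Here $\B_t:=\I+\eta(\I-\M_t)\bfSigma$, $\M_t:=\bfS_t\bfS_t^\top$, and your $\bfS_t$ is the paper's $\bfPhi_t$. There is a genuine gap at Step~2: it carries the whole argument, it is only asserted, and the route you sketch for it fails.

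\textbf{Why your Step~2 route fails.} To first order, $\M_t$ moves by $\eta\mathbf{H}_t$ with $\mathbf{H}_t=(\I-\M_t)\bfSigma\M_t+\M_t\bfSigma(\I-\M_t)$.
\begin{itemize}
\item In an eigenbasis of $\M_t$ with eigenvalues $m_i$, the entries of $\mathbf{H}_t$ are $s_{ij}(m_i+m_j-2m_im_j)$, where $s_{ij}$ are the entries of $\bfSigma$ in that basis.
\item The diagonal entries $2s_{ii}m_i(1-m_i)$ are small, but the entry coupling an unlearned direction ($m_i\approx 0$) to a learned one ($m_j\approx 1$) is about $s_{ij}$.
\item Hence $\mathbf{H}_t$ is indefinite, and $\M_t$ is not monotone in Loewner order.
\item Termwise Weyl fails for the same reason. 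It leaves the term $-2\eta\lambda_{\max}(\M_t\bfSigma\M_t)\le-\frac{2\eta}{\kappa}\lambda_{\max}(\M_t)^2$, while the gain it certifies from $(\I+\eta\bfSigma)\M_t(\I+\eta\bfSigma)$ is only about $\frac{2\eta}{\kappa}\lambda_{\min}(\M_t)$.
\item So once some direction is nearly aligned while $\lambda_{\min}(\M_t)$ is still small, the bound drops below $\lambda_{\min}(\M_t)$. You get neither growth nor monotonicity, because Weyl ignores that the negative term lives on directions where $\M_t$ itself is large.
\end{itemize}

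\textbf{The phase bookkeeping is also not derived.}
\begin{itemize}
\item A geometric rate $1+\eta/\kappa$ would give a Phase~I length of $\frac{\kappa}{\eta}\log\frac{mr}{(r-r_A)^2}$, not $\frac{\kappa}{\eta}\cdot\frac{mr}{(r-r_A)^2}$.
\item The Phase~II contraction of the trace quantity $\delta_t$ does not follow from a $\lambda_{\min}$ recursion unless you show the retraction loss is proportional to $\delta_t$.
\end{itemize}

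\textbf{How the paper avoids this.} It never pushes $\lambda_{\min}$ through the recursion.
\begin{itemize}
\item Progress is measured by $\tr(\M_t)$. After diagonalizing $\bfPhi_t$, only the diagonal entries of $\mathbf{H}_t$ survive the trace. This gives $\tr(\M_{t+1})-\tr(\M_t)\ge\frac{2\eta}{\kappa}(1-\eta^2\beta_t)\tr((\I-\M_t)\M_t)-\eta^2\beta_t\tr(\M_t)$, where $\beta_t:=\|\I-\M_t\|\ge\|\G_t^\top\G_t\|$.
\item Monotonicity of $\sigma_{r_A}(\bfPhi_t)$ comes separately from the complement. Since $\U_\perp^\top\G_t=\bfPsi_t\bfTheta_t$, one has $\bfPsi_{t+1}=\bfPsi_t(\I-\eta\bfTheta_t)(\I+\eta^2\G_t^\top\G_t)^{-1/2}$, a product with contractions, so $\bfPsi_{t+1}\bfPsi_{t+1}^\top\preceq\bfPsi_t\bfPsi_t^\top$. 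Then use $\sigma_{r_A}^2(\bfPhi)=1-\sigma_{r+1-r_A}^2(\bfPsi)$ (Lemmas~\ref{lemma.singular_ascending} and~\ref{lemma.apdx.sin-cos}).
\item Consequently $\tr((\I-\M_t)\M_t)\ge\sigma_{r_A}^2(\bfPhi_0)\tr(\I-\M_t)$, which gives a fixed additive gain per step in Phase~I.
\item That additive count produces the $\frac{m^2r^2r_A^2\kappa^2}{(r-r_A)^4}$ term. Requiring the retraction loss $\eta^2 r_A$ to stay below this gain produces the factor $\frac{(r-r_A)^2}{mr}$ in $\eta$.
\end{itemize}

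\textbf{Your $\log\det$ fallback closes the gap and should be your main line.}
\begin{itemize}
\item Exactly, $\det\M_{t+1}\ge(1-\eta^2\beta_t)^{r_A}\det(\B_t)^2\det\M_t$, with no cross terms.
\item By Sylvester's identity, $\log\det\B_t\ge\eta(1-\frac{\eta}{2})\tr((\I-\M_t)\bfSigma)\ge\frac{\eta(1-\eta/2)}{\kappa}\delta_t$.
\item With $\beta_t\le\delta_t$ and $\eta\le\frac{1}{4r_A\kappa}$, the quantity $L_t:=-\log\det\M_t$ satisfies $L_{t+1}\le L_t-\frac{\eta}{\kappa}\delta_t$.
\item Combined with $\delta_t\le L_t$, $L_0\le r_A\log\frac{c_1mr}{(r-r_A)^2}$, and $\delta_t\ge\min\{\frac12,\frac{L_t}{2}\}$, this gives both phases with a bound no worse than the theorem's.
\end{itemize}

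Your Step~2 inequality is in fact true, but proving it needs a potential such as $\Y_t:=\M_t^{-1}-\I$, not Weyl. That $\Y_{t+1}$ is bounded above in Loewner order by $\Y_t-\eta(\bfSigma\Y_t+\Y_t\bfSigma)$ plus a term of norm $O(\eta^2\|\Y_t\|)$.
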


\textbf{Proof sketch:}
The proof of Theorem \ref{theorem_DMD} is based on tracking the evolution of the subspace spanned by the direction variable $\X_t$. Since the target matrix has column space $\mathsf{span}(\U)$, convergence of the direction component amounts to $\mathsf{span}(\U)\subseteq \mathsf{span}(\X_t)$, or equivalently, to the vanishing of all principal angles between these two subspaces. We measure this alignment through
$\bfPhi_t:=\U^\top\X_t$, whose singular values equal the cosines of the principal angles \cite{bjorck1973numerical}. In particular, $\tr(\bfPhi_t\bfPhi_t^\top)\to r_A$ indicates that the two subspaces become fully aligned. The proof proceeds in two stages. In the first stage, starting from random initialization, we show that $\tr(\bfPhi_t\bfPhi_t^\top)$ increases monotonically from near zero to a constant-level alignment, say $r_A-0.5$. This establishes that the iterates escape saddle regions in polynomial time. In the second stage, once $\tr(\bfPhi_t\bfPhi_t^\top)>r_A-0.5$, we prove a contraction for the residual alignment error $r_A-\tr(\bfPhi_t\bfPhi_t^\top)=\tr(\I_{r_A}-\bfPhi_t\bfPhi_t^\top)$, which then decays linearly to zero. Once the correct subspace is identified, the corresponding magnitude component is automatically recovered. Combining the subspace alignment and magnitude recovery yields geometric decay of the reconstruction error and hence $\lim_{t\to\infty}\|\X_t\bfTheta_t\X_t^\top-\A\|_\fro=0$.

\begin{remark}[Discussion of Theorem \ref{theorem_DMD}]
    Comparing with other iteration complexity bounds in Table \ref{tab:comparison},
    Theorem \ref{theorem_DMD} highlights several advantages of overparameterized DMD in efficient optimization on matrix factorization.

    \emph{(i) Linear convergence.} Theorem \ref{theorem_DMD} shows that overparameterized DMD converges to the ground-truth matrix $\A$ at a linear rate. In contrast, based on the BM formulation, the convergence behavior of randomly initialized GD on \eqref{prob_bm} is weaker, where at best a sublinear convergence rate can be achieved under overparameterization \cite{xiong2024how}. 

    \emph{(ii) Improved $\kappa$-dependence.} In terms of the dependence on the condition number $\kappa$, overparameterized DMD improves the scaling from $\mathcal{O}(\kappa^4)$, which arises for GD in the exactparameterized setting \cite{ye2021global} and for Riemannian gradient descent (RGD) in the overparameterized setting \cite{lion2025polar}, to $\mathcal{O}(\kappa^2)$. This reduced $\kappa$-dependence indicates that the proposed algorithm is particularly suitable for ill-conditioned problems. Moreover, since $r-r_A<r<m$, our iteration complexity bound strictly improves upon those of both GD and RGD.
    
    \emph{(iii) Faster with overparameterization.} Because the additional parameters inevitably induce computation and memory overheads, it is natural to ask whether a higher level of overparameterization, i.e., a larger $r$, can also bring commensurate optimization gains. It can be seen from Table \ref{tab:comparison} that GD does not benefit from increasing $r$, whereas overparameterized DMD can effectively leverage it. Setting $r = p r_A$ for some $p > 1$, one can rewrite the iteration complexity as $\mathcal{O}\big(\frac{m^2 p^2 \kappa^2 }{(p-1)^4}+\frac{mp\kappa^2}{(p-1)^2}\log(\frac{1}{\varepsilon})\big)$, which decreases polynomially with $p$. To quantitatively understand the merits of overparameterization, we consider two cases.
    In the mildly overparameterized regime, where $r = r_A + c$ for some constant $c ={\mathcal O}(1)$, the convergence complexity reads $\mathcal{O}\big(m^2r_A^4\kappa^2+mr_A^2\kappa^2\log(\frac{1}{\varepsilon})\big)$. When the level of overparameterization increases to $r = c r_A $, the bound improves to $\mathcal{O}\big(m^2\kappa^2+m\kappa^2\log(\frac{1}{\varepsilon})\big)$. 
    Through comparison, it is readily seen that a larger $r$ yields up to a factor of ${\mathcal O}(r_A^2)$ reduction in iteration complexity.
\end{remark}

Lastly, Theorem \ref{theorem_DMD} implies that at most $\mathcal{O}(\frac{m^2 r^2 r_A^2 \kappa^2}{(r-r_A)^4})$ iterations are sufficient to escape all possible saddle points. A detailed characterization of the saddle structure and the escaping dynamics of overparameterized DMD is presented in the next section.

\section{The saddle-to-saddle dynamics of overparameterized DMD}\label{sec:saddle_phase}

In this section, we take a closer look at the saddle escape of overparameterized DMD for matrix factorization. As shown in Figure \ref{fig:saddle} and explained in detail below, overparameterized DMD traverses a sequence of saddles before reaching a global optimum. The saddle-to-saddle behavior is known for GD on BM formulation \eqref{prob_bm} \cite{li2020towards,jin2023understanding} and we now characterize this behavior for overparameterized DMD on \eqref{prob_dmd}. To this end, we first identify a family of saddle points of $f(\X,\bfTheta)$.

Let $\A_\rho:=\argmin\limits_{\rank(\hat{\A})\leq \rho}\, \|\hat{\A}-\A\|_\fro^2$ be the best rank-$\rho$ approximation of $\A$ and $\A_0=\bm{0}$. The saddle points are characterized as follows.

\begin{lemma}\label{lemma_saddle_points}
    For a given $\rho \in \{0, 1, \ldots, r_A -1 \}$, a point $(\X, \bfTheta)$ is a saddle point of $f(\X,\bfTheta)$ if $\bfTheta=\X^\top\A\X$ and $\X\bfTheta\X^\top = \A_\rho$.
\end{lemma}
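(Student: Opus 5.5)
The plan has two parts. First, show that $(\X,\bfTheta)$ is a first-order stationary point of $f$ on $\st(m,r)\times\mathbb{S}^r$. Second, exhibit feasible perturbations of arbitrarily small size along which $f$ strictly decreases, so the point is not a local minimum. That $f$ also strictly increases along some directions is immediate, e.g. $\bfTheta\mapsto\bfTheta+t\I_r$ with $t>0$ small increases $f$ at a stationary point by order $t^2$, so the point is not a local maximum either.

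\emph{Stationarity.} The partial gradient in $\bfTheta$ is $\X^\top(\X\bfTheta\X^\top-\A)\X=\bfTheta-\X^\top\A\X=0$ by the first assumption. For $\X$, the Euclidean gradient is $(\X\bfTheta\X^\top-\A)\X\bfTheta=(\A_\rho-\A)\X\bfTheta$.
\begin{itemize}
\item Since $\X^\top\X=\I_r$, we have $\bfTheta=\X^\top\A_\rho\X$.
\item Since $\range(\A_\rho)=\range(\X\bfTheta\X^\top)\subseteq\range(\X)$, we get $\X\X^\top\A_\rho=\A_\rho$, and therefore $\X\bfTheta=\X\X^\top\A_\rho\X=\A_\rho\X$.
\item Hence the Euclidean gradient equals $(\A_\rho-\A)\A_\rho\X$. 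This vanishes because $\A_\rho$ is a spectral truncation of $\A$, so $\A\A_\rho=\A_\rho^2$, even with tied eigenvalues, for the chosen truncation.
\end{itemize}
The Euclidean gradient is zero, so its tangent projection, the Riemannian gradient, is zero as well.

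\emph{Descent direction.} Since $\rank(\bfTheta)\le\rho<r_A<r$, pick a unit $\z\in\mathbb{R}^r$ with $\bfTheta\z=0$, and set $\x=\X\z$. Then $\A_\rho\x=\X\bfTheta\z=0$. Next pick a unit vector $\y$ with $\y\perp\range(\A_\rho)$, $\y^\top\A\y>0$, and $\y$ obtained from $\x$ by a small rotation.
\begin{itemize}
\item \emph{Case 1: $\range(\A)\subseteq\range(\X)$.} Take the eigenvector $\uu$ of $\A$ for $\lambda_{\rho+1}(\A)>0$, orthogonal to $\range(\A_\rho)$. It lies in $\range(\X)$, so perturb only the magnitude: $\bfTheta_t=\bfTheta+t\X^\top\uu\uu^\top\X$ gives $\X\bfTheta_t\X^\top=\A_\rho+t\uu\uu^\top$, and
\[
\|\A_\rho+t\uu\uu^\top-\A\|_\fro^2=\|\A_\rho-\A\|_\fro^2-2t\lambda_{\rho+1}+t^2,
\]
which decreases for small $t>0$.
\item \emph{Case 2: $\range(\A)\not\subseteq\range(\X)$.} Then $\A$ does not vanish on $\range(\X)^\perp$, so there is a unit $\tilde{\uu}\perp\range(\X)$ with $\tilde{\uu}^\top\A\tilde{\uu}>0$. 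Use the curve
\[
\X(s)=\X+(\cos s-1)\x\z^\top+\sin s\,\tilde{\uu}\z^\top,
\]
which stays on $\st(m,r)$. It is a rotation of the column direction $\z$ only, with $\X(s)\z=\y_s:=\cos s\,\x+\sin s\,\tilde{\uu}$. Because $\bfTheta\z=0$, we have $\X(s)\bfTheta\X(s)^\top=\A_\rho$ unchanged. Setting $\bfTheta(t)=\bfTheta+t\z\z^\top$ gives $\X(s)\bfTheta(t)\X(s)^\top=\A_\rho+t\y_s\y_s^\top$. Since $\y_s\perp\range(\A_\rho)$, the loss changes by $-2t\,\y_s^\top\A\y_s+t^2$, where
\[
\y_s^\top\A\y_s=\cos^2\!s\,\x^\top\A\x+2\sin s\cos s\,\x^\top\A\tilde{\uu}+\sin^2\!s\,\tilde{\uu}^\top\A\tilde{\uu}.
\]
If $\x^\top\A\x>0$, take $s=0$. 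Otherwise $\A\x=0$ by PSD-ness, and $\y_s^\top\A\y_s=\sin^2\!s\,\tilde{\uu}^\top\A\tilde{\uu}>0$ for any small $s\neq0$. Then choose $0<t\ll\sin^2 s$.
\end{itemize}
In both cases we obtain points arbitrarily close to $(\X,\bfTheta)$ with strictly smaller $f$.

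The main obstacle is this second step: $\X$ is constrained to the Stiefel manifold, and the missing eigendirection of $\A$ may not lie in $\range(\X)$. Handling this requires the joint rotation-plus-magnitude perturbation built on the kernel direction $\z$ of $\bfTheta$, together with the positivity argument for $\y_s^\top\A\y_s$ at small $s$. The stationarity step is routine once one notes $\X\X^\top\A_\rho=\A_\rho$.
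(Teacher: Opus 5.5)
Your proof is correct. It is organized differently from the paper's, although the descent step rests on the same idea.

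\textbf{How the paper proceeds.} It first derives a normal form for every such point. Writing $\A_\rho=\U_1\bfSigma_1\U_1^\top$, it shows $\U_1=\X\Q$ and completes $\Q$ to an orthogonal $\tilde{\Q}=[\Q,\PP]$. It then uses $\bfTheta=\X^\top\A\X$ to prove that the remaining columns $\V_1=\X\PP$ are orthogonal to all of $\range(\A)$. Hence $\X=[\U_1,\V_1]\tilde{\Q}^\top$ and $\bfTheta=\tilde{\Q}\,\diag(\lambda_1,\ldots,\lambda_\rho,0,\ldots,0)\tilde{\Q}^\top$. The paper then:
\begin{itemize}
\item checks stationarity at the canonical representative using the surrogate PCA direction $\G$ and the $\bfTheta$-gradient;
\item builds an ascent perturbation (replace $\lambda_1$ by $\lambda_1-\nu$);
\item builds a descent perturbation (rotate $\bfv_1$ toward $\uu_{\rho+1}$ and give it a small magnitude $\nu_0$);
\item transports both perturbations back through $\tilde{\Q}$, and handles $\rho=0$ separately.
\end{itemize}

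\textbf{How your route differs.} You bypass the normal form entirely. The identities $\X\X^\top\A_\rho=\A_\rho$, $\A_\rho\X=\X\bfTheta$ and $\A\A_\rho=\A_\rho^2$ show that the Euclidean gradient of $f$ itself vanishes. This certifies stationarity of $f$ more directly than the paper's surrogate check. Your kernel vector $\z$ of $\bfTheta$ plays the role of the paper's $\bfv_1$-column, and $\tilde{\uu}$ plays the role of $\uu_{\rho+1}$. So the escape mechanism is the same rotation-plus-small-magnitude idea.

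\textbf{What each buys.} Your argument is shorter, treats $\rho=0$ and $\rho\geq1$ uniformly, and is insensitive to eigenvalue ties. The paper's argument gives an explicit description of the saddle set: $\range(\X)$ is $\range(\A_\rho)$ plus a subspace orthogonal to $\range(\A)$. It also names the escape direction $\uu_{\rho+1}$ explicitly.

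\textbf{Two vacuous branches.} That description shows two of your branches never occur.
\begin{itemize}
\item If $\range(\A)\subseteq\range(\X)$, then $\X\bfTheta\X^\top=\X\X^\top\A\X\X^\top=\A\neq\A_\rho$, so Case 1 never occurs.
\item $\x^\top\A\x=\z^\top\X^\top\A\X\z=\z^\top\bfTheta\z=0$ always, so only the $s\neq0$ subcase of Case 2 is ever used.
\end{itemize}
These branches are harmless but can be dropped.
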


Lemma \ref{lemma_saddle_points} characterizes a family of saddle points of $f(\X,\bfTheta)$ that are closely related to the best rank-$\rho$ approximations of $\A$. Building on this observation, we will show that along the optimization trajectory, overparameterized DMD successively learns $\A_\rho$ for increasing $\rho$, and ultimately converges to the ground-truth matrix $\A$.

We consider for simplicity a flow ($\eta \rightarrow 0$) variant of overparameterized DMD for solving problem \eqref{prob_dmd}, initialized at a random point $\X(0)\in\st(m,r)$ with $r>r_A$:
\begin{align}
	\dot{\X}(t) &= (\I_m-\X(t)\X(t)^\top)\A\X(t), \label{dynamics_cont}\\
	\bfTheta(t) &= \X(t)^\top\A\X(t).\nonumber
\end{align}

The saddle-to-saddle dynamics is shown in the following theorem.

\begin{theorem}\label{theorem_saddle_to_saddle}
    Consider the flow dynamics \eqref{dynamics_cont}, and suppose that $r_A\leq\frac{m}{2},\,\frac{\sigma_i(\A)}{\sigma_{i+1}(\A)}\geq10,\,i=1,\ldots,r_A-1$, and $r\geq c_rr_A$ for some constant $c_r>1$. For any $\sqrt{\frac{r}{m}}\leq\delta\leq\frac{1}{c_\delta\sqrt{\kappa}}$ with constant $c_\delta\geq\frac{6\sqrt{c_1}}{1-\frac{1}{c_r}}>1$, there exist times $0 = T_0 < T_1 < \cdots < T_{r_A-1}$, such that $\|\X(T_\rho)\bfTheta(T_\rho)\X(T_\rho)^\top - \A_\rho\|_\fro^2\le 5\delta,\,\rho=0,1,\ldots,r_A-1$ holds with high probability over the initialization. Moreover, $T_\rho$ can be upper bounded with 
    \begin{align*}
    T_\rho \leq\sum_{j=1}^\rho \frac{4}{\sigma_{j}} \log(\frac{m}{r}),  ~~~\text{for}~\rho = 1,\ldots,r_A-1.
    \end{align*}
\end{theorem}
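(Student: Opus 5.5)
Since the flow \eqref{dynamics_cont} is the Oja (subspace) flow for $\A$, I will solve it in closed form and read off the saddle times from the solution. Writing $\Z(t)=e^{t\A}\X(0)$, one checks that $\X(t)=\Z(t)(\Z(t)^\top\Z(t))^{-1/2}$ satisfies \eqref{dynamics_cont}; indeed $\mathsf{span}(\X(t))=e^{t\A}\mathsf{span}(\X(0))$, and the orthonormalization only rotates within the span. Because $\bfTheta(t)=\X(t)^\top\A\X(t)$, the reconstruction is $\X\bfTheta\X^\top=\PP_t\A\PP_t$ with $\PP_t:=\X(t)\X(t)^\top$. Everything therefore reduces to tracking the projector $\PP_t$ onto $e^{t\A}\mathsf{span}(\X(0))$.

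For the saddle target, let $\U_\rho$ be the top-$\rho$ eigenvectors and $\bfSigma_\rho$ the corresponding eigenvalues, so that $\A_\rho=\U_\rho\bfSigma_\rho\U_\rho^\top$. I will show that at time $T_\rho$ two things hold. First, $\PP_t$ nearly contains $\mathsf{span}(\U_\rho)$, i.e. $\|(\I-\PP_t)\U_\rho\|$ is small. Second, $\PP_t$ is nearly orthogonal to the remaining eigenvectors $\uu_{\rho+1},\dots,\uu_{r_A}$, i.e. $\|\PP_t\U_{>\rho}\|$ is small. Given both, a direct expansion
\[
\PP\A\PP-\A_\rho=\PP\U_\rho\bfSigma_\rho\U_\rho^\top\PP-\A_\rho+\PP\U_{>\rho}\bfSigma_{>\rho}\U_{>\rho}^\top\PP
\]
bounds the Frobenius error by a constant times these two quantities, weighted by $\sigma_1=1$. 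To make these quantitative, decompose $\X(0)$ in the eigenbasis. Write $\bfPhi_0^{(j)}=\uu_j^\top\X(0)\in\mathbb{R}^{1\times r}$ for $j\le r_A$, and write $\bfPsi_0=\U_\perp^\top\X(0)$ for the kernel directions. Then
\[
e^{t\A}\X(0)=\sum_{j\le r_A} e^{t\sigma_j}\uu_j\bfPhi_0^{(j)}+\U_\perp\bfPsi_0 .
\]
Random initialization is used through standard concentration for the uniform Stiefel distribution, which is where $r_A\le m/2$ and $r\ge c_r r_A$ enter. With high probability, every $r\times r$ Gram-type quantity is well conditioned: $\bfPsi_0^\top\bfPsi_0\approx \I_r\cdot\frac{m-r_A}{m}$, each row $\bfPhi_0^{(j)}$ has norm of order $\sqrt{r/m}$, and the rows $\bfPhi_0^{(1)},\dots,\bfPhi_0^{(r_A)}$ are well separated in the sense that their Gram matrix has singular values of order $r/m$. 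This last property uses $r\ge c_r r_A$, with $1-1/c_r$ appearing in $c_\delta$.

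With the expansion in hand, I compute the projection of $\mathsf{span}(e^{t\A}\X(0))$ directly. The top-$\rho$ directions are amplified by $e^{t\sigma_j}\sqrt{r/m}$ against a background of size $O(1)$ coming from $\bfPsi_0$. The subspace captures $\uu_j$ to accuracy $\delta$ once $e^{t\sigma_j}\sqrt{r/m}\gtrsim 1/\delta$. Since $\delta\ge\sqrt{r/m}$, the choice $t=\frac{4}{\sigma_j}\log(m/r)$ is enough, and summing these over $j$ gives the stated form of $T_\rho$. I would simply choose $T_\rho$ as this sum, or more tightly as $\max_{j\le\rho}$. Meanwhile $\uu_{\rho+1}$ must not yet be captured: its amplification is $e^{T_\rho\sigma_{\rho+1}}\sqrt{r/m}$. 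Because $\sigma_{j}/\sigma_{\rho+1}\ge 10$, this is at most $(m/r)^{0.4\cdot(\text{const})}\sqrt{r/m}$, and I need that to be $\lesssim\delta$. This is exactly where the ratio-$10$ gap is used, and where $\delta\le 1/(c_\delta\sqrt\kappa)$ calibrates the weighting by the $\sigma_j$'s.

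The main obstacle is the second, non-capture requirement together with the interaction of the top and lower components through the $r\times r$ normalization $(\Z^\top\Z)^{-1/2}$. I would handle this by an explicit Schur-complement computation. First, orthogonally rotate in $\mathbb{R}^r$ so that $\X(0)$'s coefficient block for $\U_\rho$ occupies the first $\rho$ columns. Then show that the remaining $r-\rho$ columns, after removing their $\U_\rho$ components, have $\uu_{\rho+1}$-coefficients of size at most $e^{t\sigma_{\rho+1}}\sqrt{r/m}$ relative to a kernel part of order $1$. The constants need care to land on $5\delta$; I would track the three error contributions separately, namely the uncaptured top mass, the leaked lower mass, and the cross terms, and bound each by roughly $\delta$ to $2\delta$.
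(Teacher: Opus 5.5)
Your route is genuinely different from the paper's. The paper never solves the flow. It derives $\dot\phi_{i,i}=2\sigma_i\phi_{i,i}-2\sigma_i\phi_{i,i}^2-2\sum_{j\neq i}\sigma_j\phi_{i,j}^2$ and inducts on $k$, comparing $\phi_{k,k}$ with logistic ODEs and controlling the cross terms of already-learned directions.

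Your closed form $\PP_t=\Z_t(\Z_t^\top\Z_t)^{-1}\Z_t^\top$, with $\Z_t=e^{t\A}\X(0)$, is correct and shortens the argument. Strictly, $\Z_t(\Z_t^\top\Z_t)^{-1/2}$ is not itself the solution for $r\ge 2$, because the flow satisfies $\X^\top\dot\X=\bm{0}$; but it has the right projector, which is all you use.

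Capture is in fact easy. With $\mathbf{w}_j:=\bfPhi_0^\top(\bfPhi_0\bfPhi_0^\top)^{-1}\e_j$ one has $e^{t\A}\X(0)\mathbf{w}_j=e^{t\sigma_j}\uu_j+\U_\perp\bfPsi_0\mathbf{w}_j$. Hence $1-\phi_{j,j}(t)\le e^{-2\sigma_jt}/\sigma_{r_A}^2(\bfPhi_0)\le c_4\frac{m}{r}e^{-2\sigma_jt}$, with $c_4=c_1/(1-1/c_r)^2$ (Lemma \ref{init_with_high_probability}). No gap assumption and no induction are needed.

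Non-capture needs no Schur complement either. The comparison $\dot\phi_{i,i}\le 2\sigma_i\phi_{i,i}(1-\phi_{i,i})$ gives $\phi_{i,i}(t)\le\frac{\phi_{i,i}(0)}{1-\phi_{i,i}(0)}e^{2\sigma_it}$; in closed form, $\Z_t^\top\Z_t\succeq\bfPsi_0^\top\bfPsi_0$ gives a similar bound. Amplifying the top directions only enlarges $\Z_t^\top\Z_t$, so that interaction helps rather than hurts.

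The genuine gap is the choice of $T_\rho$. If you take $T_\rho$ to be the stated sum, or $\frac{4}{\sigma_\rho}\log\frac mr$, the claimed bound fails in part of the admissible range. Take $\sigma_{\rho+1}=\sigma_\rho/10$.

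1. Write $\phi_{\rho+1,\rho+1}(t)=\sup_{\bfv}(e^{t\sigma_{\rho+1}}\bfPhi_0^{(\rho+1)}\bfv)^2/\bfv^\top\Z_t^\top\Z_t\bfv$ and test with $\bfv=\mathbf{w}_{\rho+1}$. This gives $\phi_{\rho+1,\rho+1}(t)\ge\frac{e^{2\sigma_{\rho+1}t}}{e^{2\sigma_{\rho+1}t}+c_4m/r}$, which at your $T_\rho$ is at least $(r/m)^{1/5}/(1+c_4)$.
2. We have $\langle\PP\A\PP-\A_\rho,\PP\uu_{\rho+1}\uu_{\rho+1}^\top\PP\rangle=\sum_{k>\rho}\sigma_k\phi_{\rho+1,k}^2\ge\sigma_{\rho+1}\phi_{\rho+1,\rho+1}^2$, while $\|\PP\uu_{\rho+1}\uu_{\rho+1}^\top\PP\|_\fro=\phi_{\rho+1,\rho+1}$.
3. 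So the squared error is at least $\sigma_{\rho+1}^2\phi_{\rho+1,\rho+1}^2\gtrsim\sigma_{\rho+1}^2(r/m)^{2/5}$. This exceeds $5\delta$ when $\delta=\sqrt{r/m}$ and $m/r$ is large.

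The inequality you need, $(m/r)^{0.4\cdot\mathrm{const}}\sqrt{r/m}\lesssim\delta$, is therefore false there, not merely unproven. The hypothesis $\delta\ge\sqrt{r/m}$ bounds $\delta$ from below, and $\delta\le1/(c_\delta\sqrt\kappa)$ cannot make anything smaller than $\delta$.

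Your non-capture threshold is also too strict. Amplitude $\lesssim\delta$ means $\phi_{\rho+1,\rho+1}\lesssim\delta^2$, i.e. essentially no growth from its initial size when $\delta=\sqrt{r/m}$. The same lower bound rules that out once $\uu_\rho$ has been captured. What Lemma \ref{apdx.lemma.trace_control_norm} actually needs is $1-\phi_{j,j}\le\delta$ for $j\le\rho$ and $\phi_{i,i}\le\sqrt\delta$ for $i>\rho$.

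The paper escapes this because its $T_k$ is the actual capture time. In the non-capture step it uses the sharper per-stage bound $\frac{5}{2(1-2/\beta)\sigma_k}\log\frac mr$, not the displayed $\frac{4}{\sigma_k}\log\frac mr$. This leaves $\phi_{i,i}(T_k)\lesssim(r/m)^{0.3}\le\sqrt\delta$.

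In your framework the fix is immediate. For $\rho\ge1$ set $T_\rho:=\frac{1}{2\sigma_\rho}\log\frac{c_4m}{r\delta}$.

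- Because $\sigma_j\ge\sigma_\rho$, this gives $1-\phi_{j,j}(T_\rho)\le\delta$ for every $j\le\rho$.
- Since $\delta\ge\sqrt{r/m}$ and $m/r\ge c_\delta^2\ge c_4$, also $T_\rho\le\frac{5}{4\sigma_\rho}\log\frac mr$.
- For $i>\rho$ this gives $2\sigma_iT_\rho\le\frac14\log\frac mr$. With Lemma \ref{apdx.lemma.init_r_m}, $\phi_{i,i}(T_\rho)\le 3\frac{r}{m}(\frac{m}{r})^{1/4}\le3\delta^{3/2}\le\sqrt\delta$.

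Lemma \ref{apdx.lemma.trace_control_norm} with $\beta=10$ then gives squared error at most $\frac{400}{81}\delta\le5\delta$. The $T_\rho$ are strictly increasing, and the stated upper bound on $T_\rho$ holds a fortiori.
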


\begin{figure}[t]
  \centering
  \hspace{0.1cm}
  \begin{subfigure}[b]{0.32\textwidth}
    \centering
    \vspace{-0.1cm}
    \includegraphics[width=\textwidth]{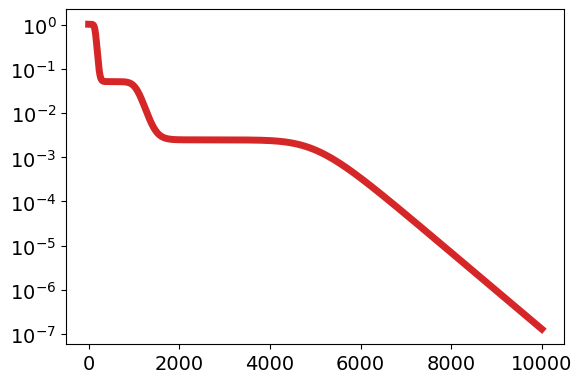}
    \caption{Squared reconstruction error}
    \label{fig:saddle_loss}
  \end{subfigure}
  \begin{subfigure}[b]{0.32\textwidth}
    \centering
    \vspace{-0.1cm}
    \includegraphics[width=\textwidth]{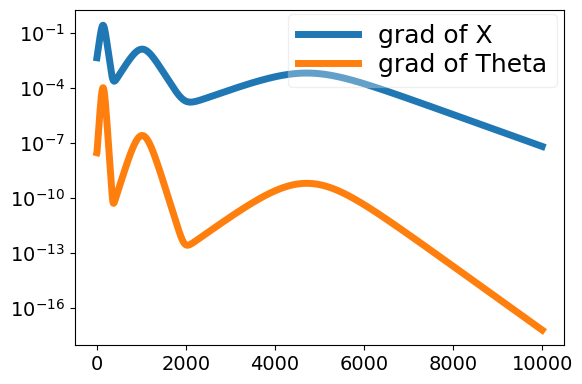}
    \caption{Gradient norm}
    \label{fig:saddle_grad}
  \end{subfigure}
  \hspace{0.1cm}
  \begin{subfigure}[b]{0.318\textwidth}
    \centering
    \vspace{-0.1cm}
    \includegraphics[width=\textwidth]{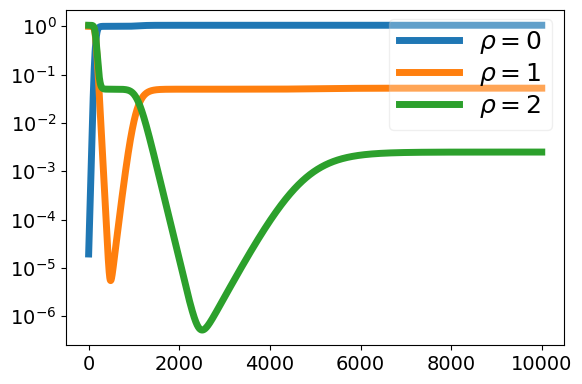}
    \caption{$\| \X_t\bfTheta_t\X_t^\top \!- \!\A_\rho \|_\fro^2$}
    \label{fig:saddle_norm}
  \end{subfigure}
  \\
  \hspace{0.25cm}
  \begin{subfigure}[b]{0.32\textwidth}
    \centering
    \includegraphics[width=\textwidth]{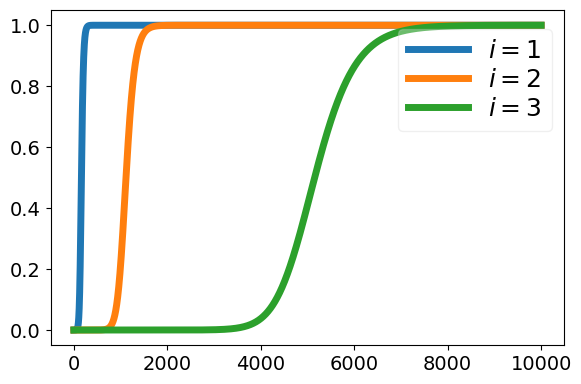}
    \caption{$\sigma_i(\bfPhi_t\bfPhi_t^\top)$}
    \label{fig:saddle_singular_Phi}
  \end{subfigure}
  \hspace{0.05cm}
   \begin{subfigure}[b]{0.305\textwidth}
    \centering
    \includegraphics[width=\textwidth]{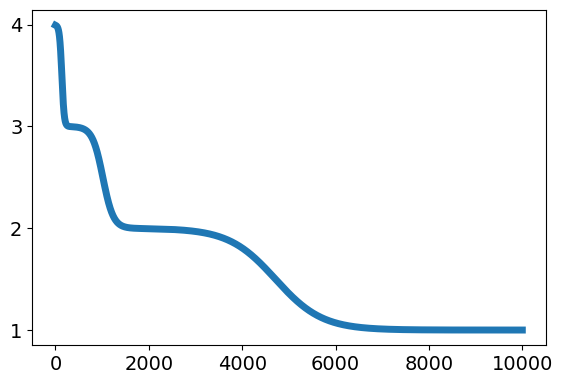}
    \caption{$\tr(\bfPsi_t\bfPsi_t^\top)$}
    \label{fig:Psi}
  \end{subfigure}
    \hspace{-0.15cm}
  \begin{subfigure}[b]{0.32\textwidth}
    \centering
    \includegraphics[width=\textwidth]{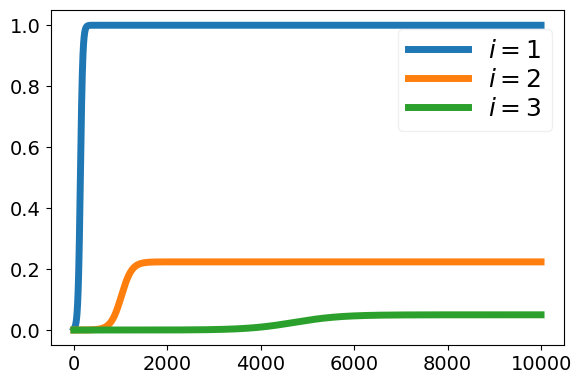}
    \caption{$\sigma_i(\bfTheta_t)$}
    \label{fig:saddle_singular_Theta}
  \end{subfigure}
  \caption{The saddle-to-saddle (i.e., sequential learning) behaviors of overparameterized DMD. The x-axis corresponds to the iteration number, and the y-axis follows the subfigure title. (a) Each plateau signifies a saddle point; (b) gradient norm at saddles drops by orders; (c) saddles strongly relate to the best rank-$\rho$ approximation of $\A$;
  (d) sequential learning in the alignment between $\X_t$ and $\U$; (e) sequential learning in the alignment between $\X_t$ and $\U_\perp$; and, (f) sequential pattern in the magnitude variable $\bfTheta_t$.}
  \label{fig:saddle}
\end{figure}

\textbf{Proof sketch:} The proof of Theorem \ref{theorem_saddle_to_saddle} is based on tracking the alignment between the direction variable $\X(t)$ and the leading eigenspaces of $\A$. Let $\A=\sum_{i=1}^{r_A}\sigma_i \uu_i\uu_i^\top$ and define the alignment variables $\phi_{i,j}(t):=\uu_i^\top \X(t)\X(t)^\top \uu_j$. In particular, $\phi_{i,i}(t)$ measures how much the current subspace $\Span(\X(t))$ captures the eigendirection $\uu_i$. Hence, learning $\A_\rho$ amounts to showing that $\phi_{i,i}(t)\to1$ for $i=1,\ldots,\rho$. The main step is to prove that these alignment variables grow sequentially. For the first direction, the spectral gap yields a logistic-type lower bound for $\phi_{1,1}(t)$, which implies that $\phi_{1,1}(t)$ grows to near one within time of $\frac{4}{\sigma_1}\log(\frac{m}{r})$. The rest of the proof proceeds by induction. Once the first $k-1$ directions have already been approximately learned, the well-separated spectrum ensures that $\phi_{k,k}(t)$ follows a logistic growth. As a result, the $k$-th direction is learned after an additional time of $\frac{4}{\sigma_k}\log(\frac{m}{r})$. Repeating this argument gives $T_\rho \leq\sum_{j=1}^\rho \frac{4}{\sigma_{j}} \log(\frac{m}{r})$. Finally, once the first $\rho$ eigendirections are aligned with $\Span(\X(T_\rho))$ and the remaining directions have only small overlap, we have $\X(T_\rho)\bfTheta(T_\rho)\X(T_\rho)^\top\approx\A_\rho$. This gives the desired reconstruction error bound at each stage.

This theorem shows that when the ratio $\frac{r}{m}$ is small, which is typically the case in low-rank settings where $r_A \ll m$ and $r$ is chosen slightly larger than $r_A$, overparameterized DMD is initialized in the vicinity of $\A_0=\bm{0}$ and then successively approaches and escapes from $\A_1, \A_2, \ldots, \A_{r_A-1}$. In other words, during the early stage, overparameterized DMD passes near a sequence of $r_A$ saddle points. 

This behavior is consistent with our empirical observations in Figure \ref{fig:saddle}, which traces the optimization trajectory of Algorithm \ref{alg_DMD} applied to \eqref{prob_dmd} on an instance of $m=2000$, $r_A=3$, $r=4$, and $\kappa=20$, using random initialization and stepsizes $\eta=0.02$ and $\mu=2$. Figure \ref{fig:saddle_loss} depicts the squared reconstruction error across iterations, where each plateau marks a saddle-escape event. This interpretation is further supported by Figure \ref{fig:saddle_grad}, which shows that the gradient norm becomes small whenever the trajectory approaches a saddle point. Furthermore, Figure \ref{fig:saddle_norm} confirms that these saddles are exactly those characterized in Lemma \ref{lemma_saddle_points}: the quantities $\|\X_t \bfTheta_t \X_t^\top - \A_\rho\|_\fro^2$ for $\rho = 0,\ldots,r_A-1$ approach zero sequentially, indicating that iterates successively visit neighborhoods of the low-rank approximations $\A_0,\A_1,\ldots,\A_{r_A-1}$ before converging to $\A$.

In addition, the direction and magnitude variables also exhibit a clear sequential learning pattern. For the direction variable $\X_t$, the singular values of $\bfPhi_t \bfPhi_t^\top$, which represent the squared cosines of the principal angles between $\Span(\X_t)$ and $\Span(\U)$ \cite{bjorck1973numerical}, are shown in Figure \ref{fig:saddle_singular_Phi}. These quantities increase one by one, indicating that $\X_t$ learns the eigenspace of the objective matrix $\A$ sequentially. Let $\U_\perp \in \mathbb{R}^{m \times (m-r_A)}$ denote an orthonormal basis of the orthogonal complement of $\Span(\U)$. The matrix $\bfPsi_t:=\U_\perp^\top\X_t$ measures the component of $\X_t$ outside the target eigenspace. As shown in Figure \ref{fig:Psi}, the singular values of $\bfPsi_t$ decrease correspondingly, confirming that $\X_t$ gradually eliminates directions orthogonal to the true subspace. Geometrically, this process can be viewed as a sequence of subspace-identification events, where one principal angle vanishes at a time. Each saddle-escape event therefore corresponds to the discovery of an additional eigenvector direction and moves the trajectory from the neighborhood of $\A_\rho$ toward that of $\A_{\rho+1}$.

Meanwhile, the magnitude variable $\bfTheta_t$ adapts to the directions discovered by $\X_t$. Its singular values increase sequentially, indicating that the algorithm progressively captures the magnitudes associated with the aligned directions; see Figure \ref{fig:saddle_singular_Theta}.

Another noteworthy observation is that escaping saddle points requires more iterations at later stages; see Figure \ref{fig:saddle_loss}. The following lemma provides a theoretical explanation.

\begin{lemma}\label{lemma_saddle_lower_bound}
    Consider the same flow dynamics as in Theorem \ref{theorem_saddle_to_saddle} with randomly initialized $\X(0)\in\st(m,r),r>r_A$. For any $t\geq0$, $0\leq \rho\leq r_A-1$, with high probability over the initialization, it holds that 
    $$
        \|\X(t)\bfTheta(t)\X(t)^\top-\A_\rho\|_\fro^2\geq\sum_{j=\rho+1}^{r_A}\sigma_\rho^2\cdot\frac{(r-r_A)^8}{c_1^4m^4r^4}.
    $$
\end{lemma}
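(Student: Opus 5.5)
The plan is to use the closed-form structure of the flow \eqref{dynamics_cont}, reduce the Frobenius error to diagonal entries along the unlearned eigendirections, and then lower bound the alignment of $\Span(\X(t))$ with each eigenvector $\uu_j$ uniformly in time using only the initialization. Write $\A=\sum_{i=1}^{r_A}\sigma_i\uu_i\uu_i^\top$ and $\PP(t):=\X(t)\X(t)^\top$. Then $\X\bfTheta\X^\top=\PP\A\PP$. I read the factor $\sigma_\rho^2$ in the statement as $\sigma_j^2$ inside the sum; the argument below gives the bound with $\sigma_j^2$. For the literal $\sigma_\rho^2$ one would need a relation between $\sigma_\rho$ and $\sigma_j$, which is not immediate, so I do not claim the bound in that form.

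\emph{Step 1 (reduction to alignments).} Let $\E:=\PP\A\PP-\A_\rho$. Since $\{\uu_j\}$ is orthonormal, $\|\E\|_\fro^2\ge\sum_{j=\rho+1}^{r_A}(\uu_j^\top\E\uu_j)^2$. For $j>\rho$ we have $\A_\rho\uu_j=0$, so $\uu_j^\top\E\uu_j=\uu_j^\top\PP\A\PP\uu_j$. Since $\A\succeq\sigma_j\uu_j\uu_j^\top$, this is at least $\sigma_j(\uu_j^\top\PP\uu_j)^2=\sigma_j\phi_{j,j}^2$. Hence
\[
\|\E\|_\fro^2\ge\sum_{j=\rho+1}^{r_A}\sigma_j^2\,\phi_{j,j}(t)^4 ,
\]
and it suffices to show $\phi_{j,j}(t)\ge\frac{(r-r_A)^2}{c_1 m r}$ for all $t\ge0$ and all $j\le r_A$.

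\emph{Step 2 (explicit trajectory).} The flow \eqref{dynamics_cont} is the Oja subspace flow. Using $\frac{d}{dt}\PP=(\I-\PP)\A\PP+\PP\A(\I-\PP)$, I will verify that $\Span(\X(t))=\Span(e^{\A t}\X(0))$ for all $t$. Concretely, $\Y(t):=e^{\A t}\X(0)$ satisfies $\dot\Y=\A\Y$, and the projector onto its column space obeys the same Riccati equation with the same initial value, so uniqueness gives equality. Consequently $\phi_{j,j}(t)=\max_{\bfv\in\Span(\Y(t)),\,\|\bfv\|=1}(\uu_j^\top\bfv)^2$.

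\emph{Step 3 (test vector and initialization).} With high probability the $r_A\times r$ matrix $\U^\top\X(0)$ has full row rank and $\sigma_{r_A}(\U^\top\X(0))\ge s:=\frac{r-r_A}{\sqrt{c_1 m r}}$. This follows from $\X(0)=\Z_0(\Z_0^\top\Z_0)^{-1/2}$, the Rudelson--Vershynin smallest-singular-value bound $\sigma_{\min}(\U^\top\Z_0)\gtrsim(\sqrt r-\sqrt{r_A-1})\ge\frac{r-r_A}{2\sqrt r}$, and $\|\Z_0\|\lesssim\sqrt m$ using $r_A\le m/2$. I take this to be the same event and constant $c_1$ used for Theorem \ref{theorem_DMD}. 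Choose $\z$ with $\U^\top\X(0)\z=\e_j$ and $\|\z\|\le 1/s$, and set $\bfv=\Y(t)\z$. Because $e^{\A t}$ acts as $e^{\sigma_i t}$ on $\uu_i$ and as the identity on $\Span(\U_\perp)$, we get $\U^\top\bfv=e^{\sigma_j t}\e_j$ and $\U_\perp^\top\bfv=\U_\perp^\top\X(0)\z$, whose norm is at most $1/s$. Therefore
\[
\phi_{j,j}(t)\ge\frac{e^{2\sigma_j t}}{e^{2\sigma_j t}+s^{-2}}\ge\frac{s^2}{1+s^2}\ge\frac{s^2}{2}.
\]
After adjusting $c_1$, this gives the required $\frac{(r-r_A)^2}{c_1 m r}$ for every $t$ and every $j$. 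Plugging into Step 1 yields the claim.

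The main obstacle is Step 3's probabilistic bound: getting a clean $\frac{(r-r_A)}{\sqrt{mr}}$ lower bound on $\sigma_{\min}(\U^\top\X(0))$ with the right constants. Step 2 also needs care if the ODE is only stated for $\X$, since one must rigorously identify the column space, but this is standard.
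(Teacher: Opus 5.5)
Your proof is correct. Your reading of $\sigma_\rho^2$ as $\sigma_j^2$ matches what the paper's own proof yields: its final line is $\sum_{j=\rho+1}^{r_A}\sigma_j^2\cdot\frac{(r-r_A)^8}{c_1^4m^4r^4}$, and for $\rho=0$ the literal $\sigma_0$ is undefined anyway. Your Step 1 is the paper's reduction: take the diagonal entries in the eigenbasis for $j>\rho$ and keep only the $k=j$ term.

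The genuine difference is how you obtain the uniform-in-time bound $\phi_{j,j}(t)\ge\frac{(r-r_A)^2}{c_1mr}$. The paper never solves the flow. It notes that $\bfPsi(t)=\U_\perp^\top\X(t)$ satisfies $\frac{d}{dt}\bfPsi\bfPsi^\top=-2\bfPsi\bfTheta\bfPsi^\top\preceq\bm{0}$, where $\bfTheta=\X^\top\A\X\succeq\bm{0}$, so $\bfPsi\bfPsi^\top$ is Loewner-nonincreasing. The sine--cosine identity $\sigma_{r_A}^2(\bfPhi)=1-\sigma_{r+1-r_A}^2(\bfPsi)$ then makes $\sigma_{r_A}(\bfPhi(t))$ nondecreasing. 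Finally $\phi_{j,j}(t)=\e_j^\top\bfPhi(t)\bfPhi(t)^\top\e_j\ge\sigma_{r_A}^2(\bfPhi(t))\ge\sigma_{r_A}^2(\bfPhi(0))$, combined with the same initialization event (Lemma \ref{init_with_high_probability}) that you use. You instead identify $\Span(\X(t))=\Span(e^{\A t}\X(0))$ and plug in a test vector.

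What each buys: the paper's route is a local differential inequality that needs no closed form, and it is the same mechanism the paper uses for the discrete iteration (Lemma \ref{lemma.singular_ascending}). Yours relies on the exact linear solvability of the continuous Oja flow, but it returns more. Your bound is time-dependent and increases to $1$, so it captures the growth of every alignment, not merely its non-decrease.

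One small fix: you do not need to lose a factor $2$ and then adjust $c_1$. Since $\X(0)$ has orthonormal columns, $\|\z\|^2=\|\X(0)\z\|^2=1+\|\U_\perp^\top\X(0)\z\|^2$. Hence $\|\U_\perp^\top\X(0)\z\|^2\le s^{-2}-1$, and $\phi_{j,j}(t)\ge\frac{e^{2\sigma_jt}}{e^{2\sigma_jt}+s^{-2}-1}\ge s^2$, which gives the stated constant $c_1^4$ exactly.
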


This lemma establishes a uniform lower bound on the distance between the optimization trajectory and each saddle point. Since the bound decreases with $\rho$, the trajectory can approach later saddles more closely than earlier ones, as evidenced by the minimum value of $\|\X_t\bfTheta_t\X_t^\top  - \A_\rho\|_\fro^2$ in Figure \ref{fig:saddle_norm}. As a result, more iterations are needed to escape later saddles.

\section{Recursive DMD}\label{sec:recursive}
As analyzed in Section~\ref{sec:saddle_phase}, the saddle-to-saddle dynamics of overparameterized DMD is equivalent to incrementally recovering the best rank-$\rho$ approximation of $\A$. Interestingly, even when rank-$r$ variables $\X\in\mathbb{R}^{m\times r}$ and $\bfTheta\in\mathbb{S}^r$ are used, the optimization dynamics still proceed by sequentially extracting one singular direction with its associated singular value at a time. In this sense, the learning process is rank-by-rank.

The sequential nature of the learning dynamics questions the necessity of overparameterization. Indeed, a natural alternative that directly follows this sequential behavior is to use a rank-1 DMD parameterization, with vector direction $\x\in\mathbb{R}^m$ and scalar magnitude $\theta\in\mathbb{R}$, to recover the leading eigenpair of $\A$, and then repeat this procedure to progressively reconstruct $\A$. We refer to this method as recursive DMD. While it is clearly more memory efficient than the rank-r DMD, in this section, we examine the subtle tradeoffs more closely.

\begin{algorithm}[t]
\caption{Rank-1 DMD for solving \eqref{prob_dmd_rank1}}\label{alg_rank1}
\begin{algorithmic}[1]
\State \textbf{Input:} Initial point $\x_0$, objective matrix $\A$, stepsizes $\eta, \mu$, and number of iterations $T$
\For{$t = 0, 1, \ldots, T$}
    \State Update the magnitude variable $\theta_{t}=\theta_{t-1}-\frac{\mu}{2}(\theta_{t-1}-\x_t^\top\A\x_t)$
    \State Calculate $\y_{t+1}=\x_t+\eta(\I_{m}-\x_t\x_t^\top)\A\x_t$
    \State Update the direction variable $\x_{t+1}=\y_{t+1}/\|\y_{t+1}\|$
\EndFor
\State \textbf{Output:} $\x_T,\theta_T$
\end{algorithmic}
\end{algorithm}

\subsection{Rank-1 DMD for leading eigenpair learning} We first formalize the mechanism of rank-1 DMD and analyze its convergence properties, which will serve as the building block for the recursive DMD introduced later.

Setting $r=1$ in problem \eqref{prob_dmd}, the direction and magnitude variables reduce to a vector and a scalar, respectively. As a result, the optimization problem becomes:
\begin{align}\label{prob_dmd_rank1}
    \min\limits_{\x\in\st(m,1),\theta\in\mathbb{R}}g(\x,\theta):=\frac{1}{4}\|\theta\x\x^\top-\A\|_\fro^2.
\end{align}
In this case, the Stiefel manifold $\st(m,1)=\{\x\in\mathbb{R}^m|\x^\top\x=1\}$ reduces to a sphere. This further simplifies the optimization for direction variables. At iteration $t$, the Riemannian update using surrogate loss can be written as $\x_{t+1}=\frac{\y_{t+1}}{\|\y_{t+1}\|}
$, where $\y_{t+1}=\x_t+\eta(\I_m-\x_t\x_t^\top)\A\x_t$. In summary, the resulting rank-1 DMD scheme is presented in Algorithm \ref{alg_rank1}. 

Next, we establish its convergence properties.
Consider solving \eqref{prob_dmd_rank1} under random initialization, where the initial direction $\x_0$ is sampled uniformly from the Stiefel manifold $\st(m,1)$. Such an initialization can be generated by drawing $\z_0\in\mathbb{R}^m$ with i.i.d. Gaussian entries $\mathcal{N}(0,1)$ and setting $\x_0=\frac{\z_0}{\|\z_0\|}$ \cite{chikuse2012statistics}.
\begin{theorem}\label{theorem_DMD_rank1}
    Consider solving the rank-1 approximation problem \eqref{prob_dmd_rank1} initialized with a random $\x_0\in\st(m,1)$. Suppose that $m\geq3$ and $\A\in\mathbb{S}^m$ is a symmetric matrix of rank $r_A$. Let the compact eigendecomposition of $\A$ be $\A=\U\bfLambda\U^\top$, where $\U\in\mathbb{R}^{m\times r_A}$ has orthonormal columns, $\bfLambda=\diag(\lambda_1,\lambda_2,\ldots,\lambda_{r_A})$ with $\lambda_1>\lambda_2\geq\ldots\geq\lambda_{r_A}$, $\lambda_1>0$ and $|\lambda_{r_A}|<\lambda_1$.

    For any optimality error $\varepsilon\in(0,1)$,  with high probability over the initialization, Algorithm \ref{alg_rank1} using stepsizes $\eta\leq\mathcal{O}(\frac{1}{\lambda_1})$ and $\mu=2$ guarantees $\|\theta_{t_\varepsilon}\x_{t_\varepsilon}\x_{t_\varepsilon}^\top-\A_1\|_\fro\leq\mathcal{O}(\lambda_1\varepsilon)$ after at most $t_\varepsilon=\mathcal{O}(\frac{\log(m)+\log(\frac{1}{\varepsilon})}{\eta(\lambda_1-\lambda_2)})$ iterations. 
\end{theorem}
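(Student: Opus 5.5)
The plan is to recognize Algorithm \ref{alg_rank1} as a shifted power method for the direction, followed by an exact magnitude step. With $\mu=2$ the magnitude update gives $\theta_t=\x_t^\top\A\x_t$, the Rayleigh quotient. For the direction update, set $\alpha_t:=\x_t^\top\A\x_t$. Then
\[
\y_{t+1}=\big(\I_m+\eta(\A-\alpha_t\I_m)\big)\x_t,
\]
so $\x_{t+1}$ is a normalized application of a polynomial in $\A$ to $\x_t$. Writing $\x_t=\sum_i c_{i,t}\uu_i+\x_t^\perp$ in an eigenbasis of $\A$, with $\x_t^\perp$ the part of $\x_t$ in $\ker\A$, each coefficient evolves multiplicatively:
\[
c_{i,t+1}\propto\big(1+\eta(\lambda_i-\alpha_t)\big)c_{i,t},\qquad \x_{t+1}^\perp\propto(1-\eta\alpha_t)\x_t^\perp .
\]
The common normalization cancels in ratios. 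Hence
\[
\frac{c_{i,t+1}}{c_{1,t+1}}=\frac{1+\eta(\lambda_i-\alpha_t)}{1+\eta(\lambda_1-\alpha_t)}\cdot\frac{c_{i,t}}{c_{1,t}} .
\]

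\textbf{Step 1: uniform contraction of the ratios.} Take $\eta\le c/\lambda_1$ with $c$ small, say $\eta\le 1/(4\lambda_1)$. Since $|\alpha_t|\le\lambda_1$ (using $|\lambda_{r_A}|<\lambda_1$), every factor $1+\eta(\lambda_i-\alpha_t)$ is positive, and the same holds for the kernel factor with $\lambda_i$ replaced by $0$. Write $D=1+\eta(\lambda_1-\alpha_t)\in[1,1+2\eta\lambda_1]$. For any index other than the first, the eigenvalue is at most $\lambda_2$ (or $0\le\lambda_2$ for the kernel, or if $\lambda_2<0$ it is at most $\max(\lambda_2,0)$). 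The ratio factor is therefore at most
\[
1-\frac{\eta(\lambda_1-\lambda_2)}{D}\le 1-\frac{\eta(\lambda_1-\lambda_2)}{2}.
\]
This needs care when $\lambda_2<0$, where the relevant gap is $\lambda_1-\max(\lambda_2,0)$; I expect the paper's gap $\lambda_1-\lambda_2$ should be read with this convention, or the kernel is handled separately.

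It remains to control the factor from below so that no component flips sign and grows. Positivity of every factor already gives this, since the ratio's absolute value stays below the upper bound. Consequently
\[
\frac{1-c_{1,t}^2}{c_{1,t}^2}=\sum_{i\ne1}\frac{c_{i,t}^2}{c_{1,t}^2}\le\Big(1-\frac{\eta(\lambda_1-\lambda_2)}{2}\Big)^{2t}\frac{1-c_{1,0}^2}{c_{1,0}^2}.
\]

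\textbf{Step 2: initialization.} For $\x_0=\z_0/\|\z_0\|$, anti-concentration of the Gaussian coordinate together with concentration of $\|\z_0\|$ gives $c_{1,0}^2\ge 1/(m\,\mathrm{poly}\log)$, or $\ge\delta^2/m$ with probability $1-O(\delta)$. The initial ratio is therefore $O(m)$ up to the probability parameter. After
\[
t=\mathcal{O}\Big(\frac{\log m+\log(1/\varepsilon)}{\eta(\lambda_1-\lambda_2)}\Big)
\]
iterations, we get $\sin^2\angle(\x_t,\uu_1)\le\varepsilon^2$.

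\textbf{Step 3: translating to the reconstruction error.} Let $s^2=1-c_{1,t}^2$. With $\theta_t=\x_t^\top\A\x_t$, we have $|\theta_t-\lambda_1|\le 2\lambda_1 s^2$ and $\|\x_t\x_t^\top-\uu_1\uu_1^\top\|_\fro\le\sqrt2\,s$. Since $\A_1=\lambda_1\uu_1\uu_1^\top$, the triangle inequality gives
\[
\|\theta_t\x_t\x_t^\top-\A_1\|_\fro\le|\theta_t-\lambda_1|+\lambda_1\sqrt2\,s=\mathcal{O}(\lambda_1\varepsilon).
\]
Note that $\A_1$ is the best rank-$1$ approximation here because $\lambda_1>|\lambda_{r_A}|$ makes $\lambda_1$ the largest eigenvalue in magnitude.

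\textbf{Main obstacle.} The main delicate step is Step 1's sign and magnitude handling. Because $\A$ is indefinite, $\alpha_t$ can be negative early on, so the shift varies with $t$. The point to check is that the contraction factor is uniform in $\alpha_t$: dividing by $D\le 1+2\eta\lambda_1$ costs only a constant when $\eta\lambda_1=\mathcal{O}(1)$. Negative eigenvalues must also not dominate, which is ensured by $|\lambda_{r_A}|<\lambda_1$ and small $\eta$. The probabilistic lower bound on $c_{1,0}$ is the only source of the ``high probability'' qualifier.
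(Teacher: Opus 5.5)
Your proposal is correct and follows the paper's proof essentially step for step. The paper likewise tracks $r_t=\sum_{i\ge2}(\alpha_i^{(t)})^2/(\alpha_1^{(t)})^2$ and contracts it by $\bigl(1-\eta(\lambda_1-\lambda_2)/2\bigr)^2$ per step, using positivity of the factors $1+\eta(\lambda_i-\x_t^\top\A\x_t)$ and the bound $1+\eta(\lambda_1-\x_t^\top\A\x_t)\le 1+2\eta\lambda_1$. It then gets $r_0\le cm$ from a Beta anti-concentration lemma, and converts $r_t\le\varepsilon^2$ into the reconstruction bound via $0\le\lambda_1-\theta_t\le 2\lambda_1 r_t$ and $\|\x_t\x_t^\top-\uu_1\uu_1^\top\|\le\sqrt{r_t}$ (operator norm, then a rank-$2$ factor of $\sqrt2$, where you work in Frobenius norm directly).

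Your concern about $\lambda_2<0$ is legitimate: the paper's bound on the kernel components tacitly needs $\lambda_2\ge 0$ when $r_A<m$. It is harmless, though. Since $\lambda_2\ge\lambda_{r_A}>-\lambda_1$, we have $\lambda_1-\max(\lambda_2,0)\ge\frac12(\lambda_1-\lambda_2)$, so the stated iteration count still holds up to a factor of $2$.
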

\begin{remark}
    Here we only require $\A$ to be symmetric, rather than positive semidefinite. In particular, for any PSD matrix $\A$, as long as $\sigma_1(\A)>\sigma_2(\A)$, the eigenvalue assumptions in Theorem \ref{theorem_DMD_rank1} are satisfied. Moreover, by the Eckart–Young–Mirsky theorem~\cite{golub1987generalization}, the best rank-1 approximation $\A_1$ is given by $\A_1=\lambda_1\uu_1\uu_1^\top$, where $\uu_1$ denotes the first column of $\U$.
\end{remark}

This theorem shows that rank-1 DMD can achieve exact convergence to the best rank-1 approximation of $\A$ in a linear rate. Now suppose that Algorithm \ref{alg_rank1} returns $(\theta_T, \x_T)$ an exact solution, i.e., $\theta_T\x_T\x_T^\top=\lambda_1\uu_1\uu_1^\top$, one can perform a deflation step by setting $\A^{(1)}:=\A-\theta_T\x_T\x_T^\top$ and then apply Algorithm \ref{alg_rank1} to $\A^{(1)}$ to learn its leading eigenpair, which corresponds to the second leading eigenpair of $\A$. We next show that this intuition remains valid even when the output of Algorithm \ref{alg_rank1} contains approximation error.

\subsection{Recursive DMD for problem \eqref{prob_dmd}}

By recursively applying Algorithm \ref{alg_rank1} together with deflation, we obtain the recursive scheme for solving problem \eqref{prob_dmd}. The resulting algorithm is summarized in Algorithm \ref{alg_recursive_rank1}, which we refer to as recursive DMD.

\begin{algorithm}[t]
    \caption{Recursive DMD for solving \eqref{prob_dmd}}\label{alg_recursive_rank1}
    \begin{algorithmic}[1]
    \State \textbf{Input:} Objective matrix $\A$, target error $\varepsilon$, largest number of rounds $r$, stepsizes $\eta, \mu$, and number of iterations $T_{\star}$ for each round of rank-1 DMD
    \State \textbf{Initialize:} $\A^{(0)}=\A$
    \For{$j = 0,1, \ldots, r-1$}
        \State Generate random $\x_0^{(j)}\in\st(m,1)$
        \State Run Algorithm \ref{alg_rank1} with inputs $\x_0^{(j)},\A^{(j)},\eta,\mu$, and $T_{\star}$ \Comment{rank-1 DMD}
        \State Obtain the output $\x_{T_{\star}}^{(j)},\theta_{T_{\star}}^{(j)}$
        \State Update $\A^{(j+1)}=\A^{(j)}-\theta_{T_{\star}}^{(j)}\x_{T_\star}^{(j)}(\x_{T_\star}^{(j)})^\top$ 
        \Comment{Deflation}
        \If{$\frac{1}{4}\|\A^{(j+1)}\|_\fro^2 \le \varepsilon$}
            \State Set $J = j + 1$
            \State \textbf{break}
        \EndIf
    \EndFor
    \State \textbf{Output:} $\x_{T_{\star}}^{(j)},\theta_{T_{\star}}^{(j)},j=0,\ldots,J-1$
    \end{algorithmic}
    \end{algorithm}

Recall that the rank of $\A\in\mathbb{S}_+^m$ is denoted by $r_A$. As before, we assume that the largest singular value of $\A$ is $\sigma_1(\A)=1$ and the smallest nonzero singular value is $\sigma_{r_A}(\A)=\frac{1}{\kappa}$, where $\kappa>1$ represents the condition number of $\A$.
\begin{theorem}\label{theorem_recursive_rank1}
    Consider solving the matrix factorization problem \eqref{prob_dmd} using Algorithm \ref{alg_recursive_rank1} with stepsizes $\eta=\mathcal{O}(1),\mu=2$ and largest number of rounds $r>r_A$. For any $\varepsilon\in(0,\frac{1}{\kappa^2})$, we set the number of iterations $T_{\star}=\mathcal{O}\big(\kappa(\log(m)+r_A)+\kappa\log(\frac{1}{\varepsilon})\big)$ for each round of rank-1 DMD. Suppose that $m\geq3$ and $\frac{\sigma_i(\A)}{\sigma_{i+1}(\A)}\geq2,i=1,\ldots,r_A-1$. With high probability over the initialization, we have that $\frac{1}{4}\|\sum_{j=0}^{J-1}\theta_{T_\star}^{(j)}\x_{T_\star}^{(j)}(\x_{T_\star}^{(j)})^\top-\A\|_\fro^2\leq\varepsilon$ after at most $T_\varepsilon=\mathcal{O}\big(\kappa(r_A\log(m)+r_A^2)+r_A\kappa\log(\frac{1}{\varepsilon})\big)$ iterations in total. Here $J$ is the total number of calls to Algorithm \ref{alg_rank1}, and satisfies $J\leq r_A$.
\end{theorem}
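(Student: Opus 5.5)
The plan is induction over the rounds $j=0,1,\ldots,r_A-1$, using Theorem~\ref{theorem_DMD_rank1} as the per-round engine together with a perturbation argument for the deflated matrices. Write $\A=\sum_{i=1}^{r_A}\sigma_i\uu_i\uu_i^\top$ and define the ideal deflated matrices $\bar\A^{(j)}:=\sum_{i>j}\sigma_i\uu_i\uu_i^\top$. The actual deflated matrix is $\A^{(j)}=\bar\A^{(j)}+\E^{(j)}$, where $\E^{(j)}=\sum_{i<j}\big(\sigma_{i+1}\uu_{i+1}\uu_{i+1}^\top-\theta^{(i)}_{T_\star}\x^{(i)}_{T_\star}(\x^{(i)}_{T_\star})^\top\big)$ accumulates the per-round errors. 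The inductive hypothesis is $\|\E^{(j)}\|_\fro\le j\,\epsilon_0$ for a per-round accuracy $\epsilon_0$ to be chosen, e.g. $\epsilon_0=c\sqrt{\varepsilon}/r_A$ with $c$ small, and also $\epsilon_0\le c'/(r_A\kappa)$. The latter is possible because $\varepsilon<1/\kappa^2$.

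\textbf{Step 1: spectral hypotheses of Theorem~\ref{theorem_DMD_rank1} hold for $\A^{(j)}$.} The matrix $\A^{(j)}$ is symmetric but possibly indefinite and full rank because of $\E^{(j)}$, which is why Theorem~\ref{theorem_DMD_rank1} was stated for symmetric $\A$. By Weyl's inequality, $\lambda_1(\A^{(j)})\ge\sigma_{j+1}-\|\E^{(j)}\|$ and $\lambda_2(\A^{(j)})\le\sigma_{j+2}+\|\E^{(j)}\|$. Also $|\lambda_{\min}(\A^{(j)})|\le\|\E^{(j)}\|$. The separation assumption $\sigma_{j+1}\ge 2\sigma_{j+2}$ gives a gap $\lambda_1-\lambda_2\ge\sigma_{j+1}/2-2\|\E^{(j)}\|\gtrsim\sigma_{j+1}\ge1/\kappa$, provided $\|\E^{(j)}\|\le r_A\epsilon_0\le\sigma_{r_A}/8$. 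Hence Theorem~\ref{theorem_DMD_rank1} applies with $\lambda_1\le 2$, so $\eta=\mathcal{O}(1)$ is admissible. It yields $\|\theta^{(j)}_{T_\star}\x^{(j)}_{T_\star}(\x^{(j)}_{T_\star})^\top-\A^{(j)}_1\|_\fro\le\epsilon_0$ after $\mathcal{O}(\kappa(\log m+\log(1/\epsilon_0)))$ iterations. Substituting $\log(1/\epsilon_0)=\mathcal{O}(\log r_A+\log\kappa+\log(1/\varepsilon))$ and $\log\kappa\le\log(1/\varepsilon)$ matches $T_\star=\mathcal{O}(\kappa(\log m+r_A)+\kappa\log(1/\varepsilon))$. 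A union bound over the at most $r$ independent random initializations keeps the high-probability guarantee.

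\textbf{Step 2: propagate the error.} This is the main obstacle, since we must compare $\A^{(j)}_1$, the top eigenpair of the perturbed matrix, with $\sigma_{j+1}\uu_{j+1}\uu_{j+1}^\top$. By Davis–Kahan with gap $\gtrsim\sigma_{j+1}$, the top eigenvector moves by $\mathcal{O}(\|\E^{(j)}\|/\sigma_{j+1})$, and the top eigenvalue moves by at most $\|\E^{(j)}\|$. Together these give $\|\A^{(j)}_1-\sigma_{j+1}\uu_{j+1}\uu_{j+1}^\top\|_\fro\le C\|\E^{(j)}\|$. Naively this makes the error grow geometrically like $C^j$. I would avoid that blow-up with a sharper bookkeeping: $\A^{(j)}-\A^{(j)}_1$ is just $\A^{(j)}$ with its top eigencomponent removed. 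Then $\E^{(j+1)}$ equals the sum of the part of $\E^{(j)}$ not captured by $\A^{(j)}_1$ and the fresh $\epsilon_0$ error. The first of these has norm at most $\|\E^{(j)}\|_\fro$ plus a second-order term $\mathcal{O}(\|\E^{(j)}\|^2/\sigma_{j+1})$, obtained by expanding the eigenprojector to second order. Under $\|\E^{(j)}\|\le r_A\epsilon_0\ll\sigma_{r_A}$ this gives $\|\E^{(j+1)}\|_\fro\le(1+\mathcal{O}(r_A\epsilon_0\kappa))\|\E^{(j)}\|_\fro+\epsilon_0$. Over $r_A$ rounds this stays below $2r_A\epsilon_0$. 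If this refinement fails, the fallback is to take $\epsilon_0=\sqrt\varepsilon\,C^{-r_A}$. That costs an extra additive $\kappa r_A$ in $\log(1/\epsilon_0)$ per round, which is exactly the $\kappa r_A$ term already present in $T_\star$; so this crude route is consistent with the stated bound.

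\textbf{Step 3: termination and count.} After round $r_A-1$, $\A^{(r_A)}=-\E^{(r_A)}$, so $\frac14\|\A^{(r_A)}\|_\fro^2\le\frac14(2r_A\epsilon_0)^2\le\varepsilon$. The stopping test is therefore triggered by $j=r_A-1$ at the latest, giving $J\le r_A$. The output residual is $\A^{(J)}$, so the test itself certifies the $\varepsilon$ bound. The total number of iterations is $J\,T_\star\le r_A T_\star=\mathcal{O}\big(\kappa(r_A\log m+r_A^2)+r_A\kappa\log(1/\varepsilon)\big)$.
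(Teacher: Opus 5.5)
Your fallback in Step~2 is exactly what the paper does. It bounds $\|\lambda_1^{(j-1)}\vv_1^{(j-1)}(\vv_1^{(j-1)})^\top-\sigma_j\uu_j\uu_j^\top\|$ by Weyl plus Davis--Kahan, uses $\sigma_j\ge 2\sigma_{j+1}$ to get $\|\E_j\|\le 6\|\E_{j-1}\|+\varepsilon_{j-1}$, and takes per-round accuracy $\sqrt{\varepsilon}/(\sqrt{r_A}\,6^{r_A})$. So the $\kappa r_A$ term in $T_\star$ is precisely the $\log 6^{r_A}$ cost you identified. Scale that choice by a small constant so that $\|\E^{(j)}\|\le\sigma_{r_A}/8$ follows from $\sqrt{\varepsilon}<1/\kappa$; then your Steps~1 and~3 give a complete proof. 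Your remark that the stopping test itself certifies the bound when $J<r_A$ is a point the paper leaves implicit.

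The refinement you lead with, however, is not established by the reason you give. Write $\uu=\uu_{j+1}$, $\sigma=\sigma_{j+1}$, $\PP_0=\I_m-\uu\uu^\top$ and $\mathbf{p}=\PP_0\E^{(j)}\uu$. To first order, the top eigenvector of $\A^{(j)}$ is $\uu+(\sigma\I_m-\bar{\A}^{(j+1)})^{-1}\mathbf{p}$, and
\[
\A^{(j)}-\A^{(j)}_1-\bar{\A}^{(j+1)}\approx\PP_0\E^{(j)}\PP_0-\big(\uu\mathbf{w}^\top+\mathbf{w}\uu^\top\big),\qquad\mathbf{w}=\bar{\A}^{(j+1)}\big(\sigma\I_m-\bar{\A}^{(j+1)}\big)^{-1}\mathbf{p}.
\]
The second term is first order in $\E^{(j)}$, not $\mathcal{O}(\|\E^{(j)}\|^2/\sigma_{j+1})$. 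Deflating along the rotated eigenvector leaks part of $\bar{\A}^{(j+1)}$ into the $\uu$ row and column, with $\|\mathbf{w}\|$ as large as $\frac{\sigma_{j+2}}{\sigma_{j+1}-\sigma_{j+2}}\|\mathbf{p}\|$.

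A gap $\gtrsim\sigma_{j+1}$ alone does not control this. Take $\M_\epsilon=\diag(1,\tfrac{2}{3})+\epsilon(\e_1\e_2^\top+\e_2\e_1^\top)$: removing its top eigencomponent leaves $\diag(0,\tfrac{2}{3})-2\epsilon(\e_1\e_2^\top+\e_2\e_1^\top)+\mathcal{O}(\epsilon^2)$, so the Frobenius error doubles in one round.

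Under the paper's ratio-$2$ assumption your inequality is plausibly true. The leaked block is Frobenius-orthogonal to $\PP_0\E^{(j)}\PP_0$, and $\sigma_{j+1}\ge2\sigma_{j+2}$ gives $\|\mathbf{w}\|\le\|\mathbf{p}\|$, with equality possible at ratio exactly $2$. But that argument, together with a non-asymptotic bound on the remainder, would have to be supplied. It would in fact prove more than the statement: an additive $\kappa\log r_A$ instead of $\kappa r_A$ in $T_\star$. For the theorem as stated, rely on the geometric-growth route.
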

\begin{remark}
    We define $\X=[\x_{T_\star}^{(0)},\ldots,\x_{T_\star}^{(J-1)}]\in\mathbb{R}^{m\times J}$ and $\bfTheta=\diag(\theta_{T_\star}^{(0)},\ldots,\theta_{T_\star}^{(J-1)})\in\mathbb{S}^J$, then $\frac{1}{4}\|\X\bfTheta\X^\top-\A\|_\fro^2\leq\varepsilon$. Although $\X$ is not necessarily orthonormal and thus does not lie on a Stiefel manifold, the resulting matrix $\X\bfTheta\X^\top$ is nevertheless positive semidefinite, since  $\theta_{T_\star}^{(j)}>0,j=0,\ldots,J-1$ (see the second part of the proof of Theorem \ref{theorem_recursive_rank1}). Consequently, this yields a low-rank positive semidefinite factorization that approximates $\A$.
\end{remark}

As established in Theorem \ref{theorem_recursive_rank1}, although there are no guarantees to precisely recover the leading eigenpair of $\A^{(j)}$, the recursive DMD can still drive the reconstruction error below any target value $\varepsilon$.

\subsection{Comparison of overparameterized DMD with recursive DMD}\label{sec:comparison_op_re}

We now give a comparison between overparameterized DMD and recursive DMD across several key aspects.

\textit{Working memory:} For overparameterized DMD, one needs to maintain the direction variable $\X\in\mathbb{R}^{m\times r}$ and the magnitude variable $\bfTheta\in\mathbb{S}^{r}$ throughout the optimization, resulting in a working memory of $\mathcal{O}(mr+r^2)$. In contrast, recursive DMD only updates a single pair $(\x,\theta)$ with $\x\in\mathbb{R}^m$ and $\theta\in\mathbb{R}$. Hence, the working memory during each update step is merely $\mathcal{O}(m)$. This enables recursive DMD to scale to larger problem sizes under fixed memory constraints.

\textit{Per-iteration complexity:} For overparameterized DMD, each iteration requires computing the Riemannian gradient $\G_t$, which costs $\mathcal{O}(m^2r+mr^2)$, and performing a polar retraction, which costs $\mathcal{O}(mr^2+r^3)$. Since $r\le m$, the per-iteration computational complexity of overparameterized DMD is dominated by $\mathcal{O}(m^2r)$. In contrast, each iteration of recursive DMD is only $\mathcal{O}(m^2)$, lower than that of overparameterized DMD by a factor proportional to $r$.

We also note that the $\kappa$-dependence of the iteration complexity for recursive DMD scales as $\mathcal{O}(\kappa)$, improving upon the $\mathcal{O}(\kappa^2)$ scaling of overparameterized DMD. This improvement stems from the \textit{additional} assumption on the well-separated spectrum of $\A$, i.e., $\frac{\sigma_i(\A)}{\sigma_{i+1}(\A)}\geq2,i=1,\ldots,r_A-1$.  We further conjecture that the $\kappa$-dependence of the iteration complexity for overparameterized DMD may be improved to $\mathcal{O}(\kappa)$ under the separated spectrum assumptions as well, and it is left for future research.

A detailed numerical comparison between overparameterized DMD and recursive DMD is provided in Section \ref{sec:num_comparison_OP_RE}.

\section{Numerical experiments}\label{sec:num_exp}
In this section, we conduct numerical experiments to validate our theoretical results of overparameterized DMD and recursive DMD\footnote{All the experiments are conducted on a MacBook Pro equipped with an Apple M2 Max processor using MATLAB R2022b.}. In the following experiments, the objective matrix is generated as $\A=\U\bfSigma\U^\top\in\mathbb{R}^{m\times m}$, where $\U\in\mathbb{R}^{m\times r_A}$ is a random matrix with orthonormal columns, and $\bfSigma\in\mathbb{S}_+^{r_A}$ is a diagonal matrix with entries evenly spaced on a logarithmic scale over the interval $[1/\kappa,1]$, yielding condition number $\kappa$. 

\begin{figure}[t]
  \centering
  \begin{subfigure}[b]{0.325\textwidth}
    \centering
    \includegraphics[width=\textwidth]{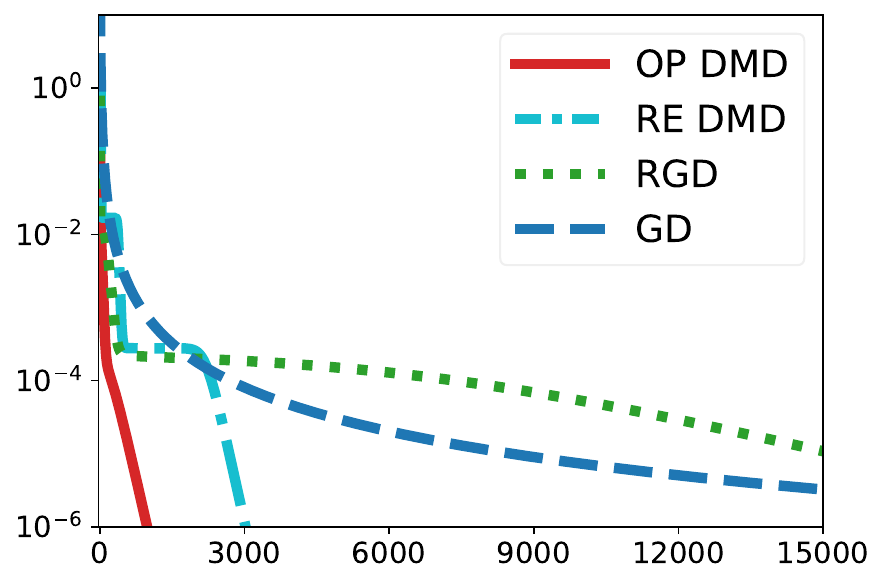}
    \caption{Matrix factorization}
    \label{fig:all_fac}
  \end{subfigure}
    \begin{subfigure}[b]{0.325\textwidth}
    \centering
    \includegraphics[width=\textwidth]{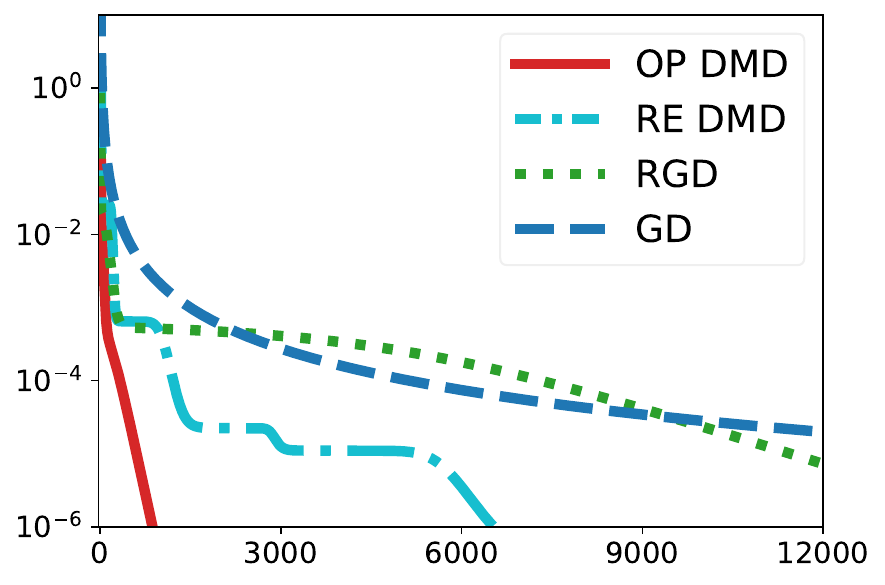}
    \caption{Matrix sensing}
    \label{fig:all_sensing}
  \end{subfigure}
  \begin{subfigure}[b]{0.325\textwidth}
    \centering
    \includegraphics[width=\textwidth]{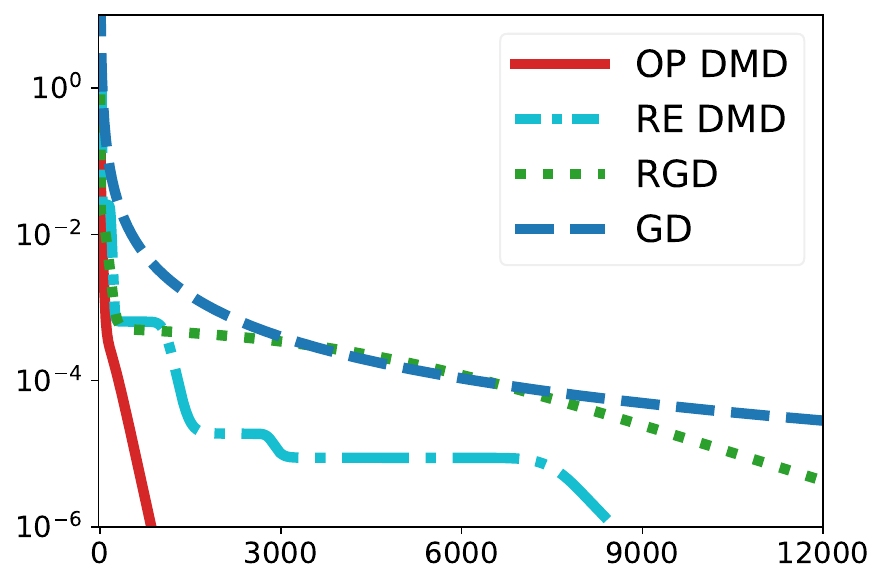}
    \caption{Matrix completion}
    \label{fig:all_completion}
  \end{subfigure}
  \caption{Comparison of overparameterized DMD (OP DMD) and recursive DMD (RE DMD) with RGD and GD on matrix factorization, matrix sensing, and matrix completion (squared reconstruction error vs. iterations).}
  \label{fig:all}
\end{figure}

\subsection{Faster convergence of overparameterized and recursive DMD}\label{sec:comparison_all}
We compare overparameterized DMD and recursive DMD with RGD \cite{lion2025polar} and GD on matrix factorization, matrix sensing and matrix completion.
All experiments are conducted with random initialization. Specifically, overparameterized DMD and RGD are initialized with $  \X_0=\Z_0(\Z_0^\top\Z_0)^{-1/2}$, where $\Z_0\in\mathbb{R}^{m\times r}$ has i.i.d. standard Gaussian entries. For GD, we set $\Y_0=\X_0$. For recursive DMD, we initialize $\x_0=\frac{\z_0}{\|\z_0\|}$,
where $\z_0\in\mathbb{R}^{m}$ also has i.i.d. standard Gaussian entries.

As natural extensions to the factorization problem, 
matrix sensing \cite{li2018algorithmic, zhong2015efficient, tong2021accelerating, shen2026escaping} aims to recover the objective matrix $\A \in \mathbb{S}_+^m$ from a collection of $n$ measured data $\{(\M_i,y_i)\}_{i=1}^n$, where each sensing matrix $\M_i\in\mathbb{S}^m$ is symmetric and the corresponding observation is $y_i=\tr(\M^\top_i\A)$. Matrix completion \cite{candes2012exact, keshavan2010matrix, ongie2021tensor, eriksson2012high} refers to the recovery of $\A$ from a set of observed entries. Let $\mathcal M:\mathbb S^m\to\mathbb R^n$ denote the sensing operator defined by $[\mathcal M(\A)]_i=\tr(\M_i^\top\A)$, and let $\mathcal M_{\bfOmega}:\mathbb S^m\to\mathbb S^m$ denote the sampling operator $\mathcal M_{\bfOmega}(\A)=\bfOmega\odot\A$, where $\bfOmega\in\{0,1\}^{m \times m}$ is a symmetric sampling mask and $\odot$ denotes element-wise product. With the same spirit of \eqref{prob_dmd}, these problems can be written in the DMD form as
\begin{align}\label{prob_sensing_bm}
    \text{\emph{Matrix sensing: }}\min_{\X\in\st(m,r),\bfTheta\in\mathbb{S}^r} \; \frac{1}{4} \| \mathcal{M}(\X \bfTheta \X^\top) - \mathcal{M}(\A) \|^2 .
\end{align}
\vspace{-1em}
\begin{align}\label{prob_completion_bm}
    \text{\emph{Matrix completion: }}\min_{\X\in\st(m,r),\bfTheta\in\mathbb{S}^r} \; \frac{1}{4} \| \mathcal{M}_\bfOmega(\X \bfTheta \X^\top) - \mathcal{M}_\bfOmega(\A) \|_\fro^2 .
\end{align}
Substituting $\X \bfTheta \X^\top$ with $\Y\Y^\top$ for ${\Y \in \mathbb{R}^{m \times r}}$ gives the classical BM formulation. 

Overparameterized DMD extends naturally to \eqref{prob_sensing_bm} and \eqref{prob_completion_bm} by replacing the full observation operator in factorization with the corresponding sensing or sampling operator. Specifically, for matrix sensing, the surrogate gradient in \eqref{rgd_X} becomes $\hat{\G}_t=\mathcal{M}^*\mathcal{M}(\X_t\X_t^\top-\A)\X_t$. Here, $\mathcal{M}^*:\mathbb{R}^n\to\mathbb{S}^{m}$ is the adjoint of operator $\mathcal{M}$. The magnitude variable $\bfTheta$ is updated by $\bfTheta_{t}=\bfTheta_{t-1}-\frac{\mu}{2}\X_t^\top[\mathcal{M}^*\mathcal{M}(\X_t\bfTheta_{t-1}\X_t^\top-\A)]\X_t$, with $\bfTheta_0$ set as $\X_0^\top[\mathcal{M}^*\mathcal{M}(\A)]\X_0$. For matrix completion, the updates are obtained similarly by replacing $\mathcal{M}$ with the sampling operator $\mathcal{M}_{\bfOmega}$. When we apply recursive DMD, each round of rank-1 DMD is terminated once $\|\theta_t\x_t\x_t^\top-\theta_{t+10}\x_{t+10}\x_{t+10}^\top\|_\fro\leq10^{-7}$, after which a deflation step is performed. In matrix factorization, the deflation step follows Algorithm \ref{alg_recursive_rank1}, while in matrix sensing and completion, deflation is applied directly to the sensed or observed measurements, respectively, using the linearity of the sensing and sampling operators. The same operator substitution is also applied to RGD and GD, following the update schemes in \cite{lion2025polar} and \cite{xiong2024how}.

For matrix factorization, we consider an instance of $m=500,r_A=3,r=30$, and $\kappa=60$. Overparameterized DMD, recursive DMD and RGD are applied to \eqref{prob_dmd} with stepsizes $\eta=0.25$ and $\mu=2$, while GD is applied to \eqref{prob_bm} with stepsize $\eta=0.25$. For matrix sensing, we set $m=200,r_A=3,r=30$, and $\kappa=40$. The number of sensing matrices is $n=8000$. These matrices are generated as $\M_i=\frac{1}{2\sqrt{n}}(\mathbf{R}_i+\mathbf{R}_i^\top),i=1,\ldots,n$, where $\mathbf{R}_i\in\mathbb{R}^{m\times m}$ has i.i.d. standard Gaussian entries. For matrix completion, we set $m=300$, $r_A=3$, $r=40$, and $\kappa=40$. The sampling mask is generated as $\bfOmega=\frac{1}{2}(\bfXi+\bfXi^\top)$, where each entry of $\bfXi\in\mathbb{R}^{m\times m}$ is independently set to $1$ with probability $p=0.8$. For both matrix sensing and completion, overparameterized DMD, recursive DMD, and RGD are run with stepsizes $\eta=0.15$ and $\mu=2$, while GD is run with stepsize $\eta=0.15$.

Figures \ref{fig:all_fac}-\ref{fig:all_completion} plot the squared reconstruction error versus the number of iterations. Across the three experiments, we observe that overparameterized DMD converges faster than both GD and RGD in the initial phase, which is consistent with the weaker $\kappa$-dependence of our iteration complexity analysis. After this stage, overparameterized DMD enters a linear convergence regime and continues to decrease the error until exact recovery of the true matrix is achieved. In contrast, RGD requires a substantially longer time to escape saddle points, subsequently exhibits a slower linear convergence rate than overparameterized DMD, in agreement with strictly improved iteration complexity of overparameterized DMD relative to RGD. Moreover, GD quickly slows down to a sublinear rate after the initial phase, resulting in significantly larger reconstruction errors at the same iteration count. Recursive DMD, although requiring more iterations than overparameterized DMD, still attains a comparable linear convergence behavior and outperforms both RGD and GD.

\subsection{Overparameterized DMD converges faster with larger $r$}
Next, we demonstrate that overparameterized DMD leverages overparameterization for faster convergence. To this end, we consider problem instances of \eqref{prob_dmd} and \eqref{prob_bm} under different $r$. In this experiment, we focus on a setting with $m=1000$, $r_A=3$, and $\kappa=100$. The level of overparameterization is chosen from $r \in \{10, 100, 300\}$. Overparameterized DMD is run on \eqref{prob_dmd} with stepsizes $\eta=0.02$ and $\mu=2$, and GD is run on \eqref{prob_bm} with stepsize $\eta=0.02$.

\begin{wrapfigure}{r}{0.477\textwidth}
    \vspace{-1em}
    \centering
    \includegraphics[width=0.477\textwidth]{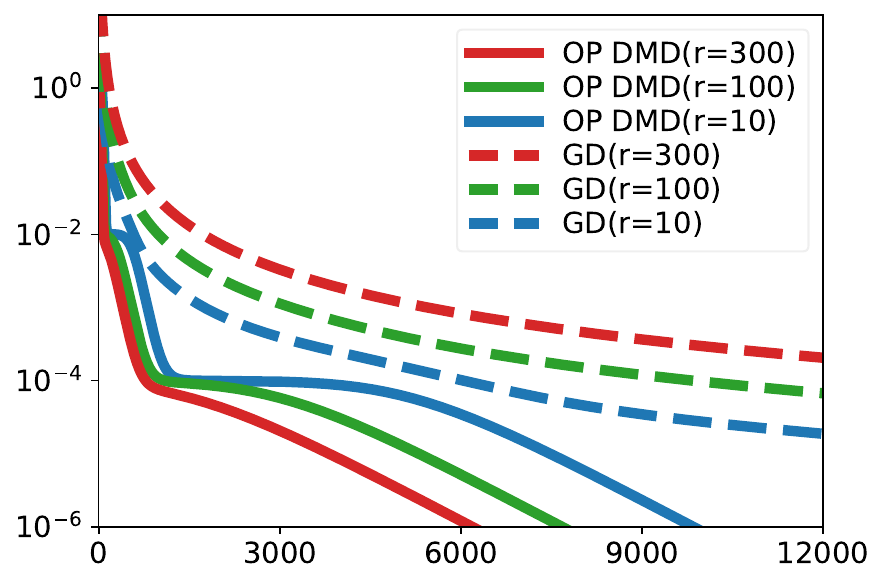}
    \caption{Comparison of overparameterized DMD (OP DMD) with GD under different $r$.}
    \label{fig:dif_r}
    \vspace{-1em}
\end{wrapfigure}

The squared reconstruction error versus the number of iterations is plotted in Figure \ref{fig:dif_r}. The results show that overparameterized DMD converges faster as $r$ increases. This behavior is consistent with our analysis in Theorem \ref{theorem_DMD}. 
In comparison, although the theoretical iteration complexity of GD given by \cite{xiong2024how} is independent of $r$, our empirical results indicate that a larger $r$ leads to slower convergence. Moreover, Figure \ref{fig:dif_r} clearly shows that overparameterized DMD escapes from the saddle phase faster with larger $r$, as reflected in shorter plateaus or earlier onset of linear convergence. This aligns well with our theoretical observations and discussions in Sections \ref{sec:benefits_DMD} and \ref{sec:saddle_phase}.

\subsection{Numerical comparison of overparameterized DMD with recursive DMD}\label{sec:num_comparison_OP_RE}

\begin{figure}[t]
\centering
  \begin{subfigure}[b]{0.48\textwidth}
    \centering
    \includegraphics[width=\textwidth]{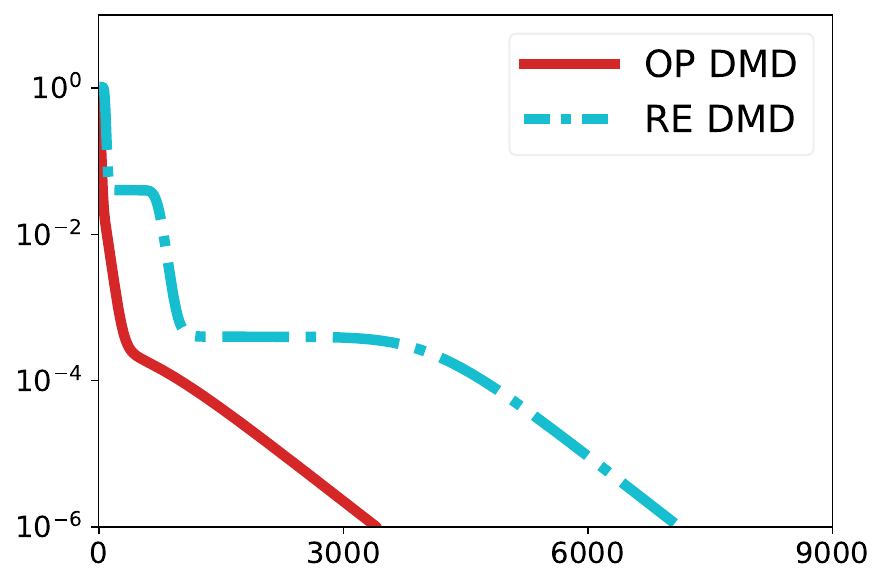}
    \caption{Well-separated (error vs. iterations)}
    \label{fig:well_sep}
  \end{subfigure}
  \hfill
  \begin{subfigure}[b]{0.48\textwidth}
    \centering
    \includegraphics[width=\textwidth]{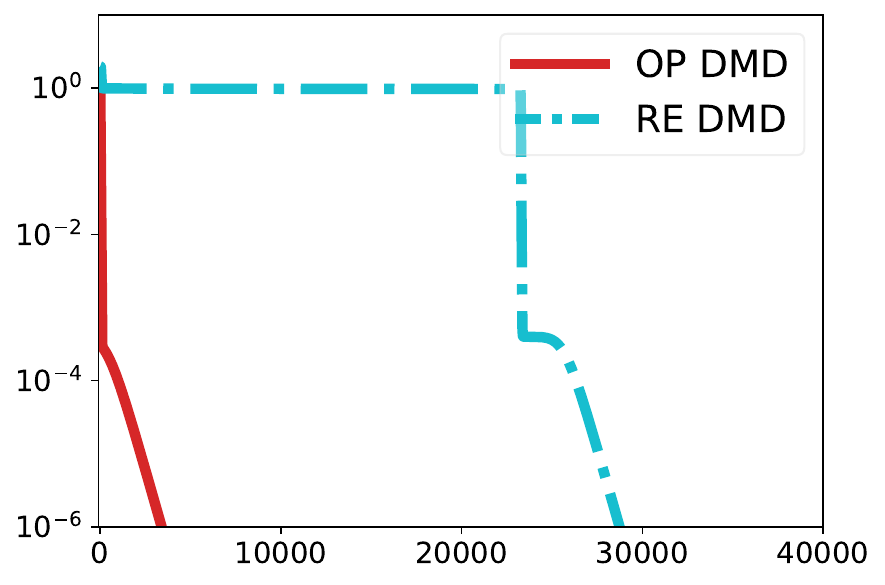}
    \caption{Not well-separated (error vs. iterations)}
    \label{fig:not_wel_sep}
  \end{subfigure}

  \begin{subfigure}[b]{0.48\textwidth}
    \centering
    \includegraphics[width=\textwidth]{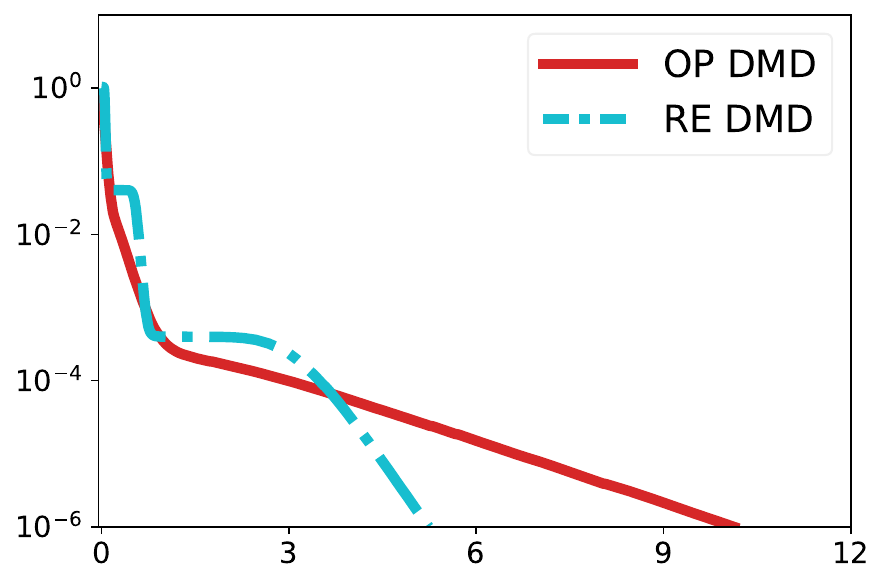}
    \caption{Well-separated (error vs. time)}
    \label{fig:well_sep_time}
  \end{subfigure}
  \hfill
  \begin{subfigure}[b]{0.48\textwidth}
    \centering
    \includegraphics[width=\textwidth]{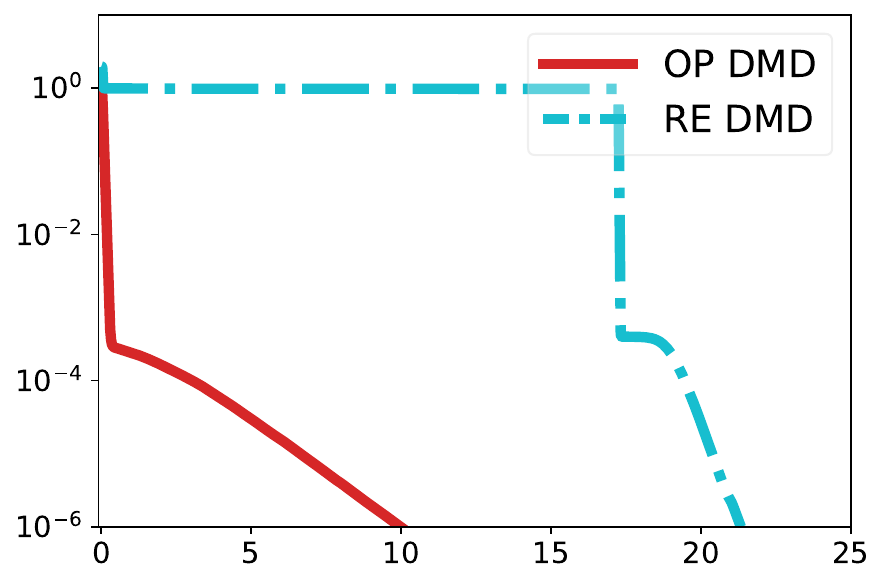}
    \caption{Not well-separated (error vs. time)}
    \label{fig:not_wel_sep_time}
  \end{subfigure}

  \caption{Comparison of overparameterized DMD (OP DMD) with recursive DMD (RE DMD) on matrix factorization under different singular value separations. Top row: squared reconstruction error vs. iterations. Bottom row: squared reconstruction error vs. time.}
  \label{fig:recursive_comparison}
\end{figure}

Lastly, we present two experiments of matrix factorization to show the tradeoffs between overparameterized DMD and recursive DMD. In the first experiment, we consider a rank-$3$ objective matrix $\A$ with well-separated singular values, $\sigma_1(\A)=1,\sigma_2(\A)=0.2,\sigma_3(\A)=0.02$. We apply both overparameterized DMD and recursive DMD to problem \eqref{prob_dmd}. The problem dimensions are set to $m=200$, $r_A=3$, and $r=30$. In the second experiment, we consider a rank-$3$ objective matrix whose leading singular values are not well separated, $\sigma_1(\A)=1,\sigma_2(\A)=0.99,\sigma_3(\A)=0.02$. The same problem dimensions, namely $m=200$, $r=30$, and $r_A=3$, are used. For both experiments, we set the stepsizes to $\eta=0.05$ and $\mu=2$ for the two algorithms, and set the target error to $\varepsilon=10^{-6}$. In each round of rank-1 DMD, the iteration is terminated once $\|\theta_t\x_t\x_t^\top-\theta_{t+10}\x_{t+10}\x_{t+10}^\top\|_\fro\leq10^{-7}$.

As shown in Figures \ref{fig:well_sep} and \ref{fig:not_wel_sep}, recursive DMD generally requires more iterations than overparameterized DMD to reach the same target error. However, each iteration of recursive DMD is significantly cheaper. Table \ref{tab:computation_time_op_re} shows that the per-iteration computational time of recursive DMD is substantially lower than that of overparameterized DMD. Consequently, recursive DMD achieves a smaller overall runtime when the singular values are well-separated; see Figure \ref{fig:well_sep_time}. When the spectrum is not well-separated, the extra iterations offset the per-iteration savings, leading to a longer runtime; see Figure \ref{fig:not_wel_sep_time}.

In a nutshell, when $\A$ exhibits well-separated singular values, recursive DMD is preferable for memory and computational efficiency. Otherwise, overparameterized DMD provides a more robust alternative.

\section{Conclusion}
\label{sec:conclusions}

\begin{table}[t]
\centering
\caption{Computational time of overparameterized DMD (OP DMD) and recursive DMD (RE DMD) on matrix factorization with different singular value separations.}
\vspace{1em}
\label{tab:computation_time_op_re}
\begin{tabular}{llccc}
\toprule
Instance & Algorithm & Iterations & Total time (s) & Time per-iteration (s) \\
\midrule
Well-separated 
& OP DMD & 3512 & 10.13 & $2.88\times10^{-3}$ \\
& RE DMD & 7120 & 5.24 & $0.74\times10^{-3}$ \\
\midrule
Not well-separated 
& OP DMD & 3405 & 9.98 & $2.93\times10^{-3}$ \\
& RE DMD & 28740 & 21.33 & $0.74\times10^{-3}$ \\
\bottomrule
\end{tabular}
\end{table}

In this work, we propose direction–magnitude decomposition (DMD) as a unified framework for low-rank matrix optimization that improves efficiency when the target rank is unknown. By decoupling direction and magnitude components, DMD provides a principled approach to overcoming the optimization challenges induced by overparameterization. We develop overparameterized DMD and establish its theoretical advantages on matrix factorization, including improved convergence rates and more favorable condition number dependence. Furthermore, we characterize a saddle-to-saddle dynamical behavior that reveals an incremental eigenpair learning mechanism underlying the optimization process. Building on this perspective, we propose recursive DMD that achieves reduced memory and computational complexity while retaining strong convergence guarantees. Extensive numerical experiments on matrix factorization, matrix sensing, and matrix completion corroborate our theoretical findings and demonstrate the practical effectiveness of the proposed framework. Overall, our results suggest that direction–magnitude decomposition offers a promising paradigm for efficient low-rank matrix optimization beyond the exactparameterized setting.

\bibliographystyle{plain}
\bibliography{references}

\clearpage
\addcontentsline{toc}{section}{Appendix}
\begin{center}
\vspace*{20pt}
{\large\bfseries Appendix}
\end{center}
\appendix

\section{Deferred proofs}\label{supplement.proofs}
\subsection{Proof of Theorem \ref{theorem_DMD}}\label{supplement.proofs.theorem_DMD}
\leavevmode\par
\begin{proof}
    For simplicity, we take $\eta=\frac{c_\eta(r-r_A)^2}{mrr_A\kappa}$, where $c_\eta=\frac{1}{8c_1}$ is a universal constant.
    
    By using the definition $\bfPhi_t$, we can write $\U^\top\G_t$ and $\G_t^\top\G_t$ as follows:
    \begin{align*}
        \U^\top\G_t&=-\U^\top(\I_m-\X_t\X_t^\top)\A\X_t\\
        &=-\U^\top(\I_m-\X_t\X_t^\top)\U\bfSigma\U^\top\X_t\\
        &=-(\I_{r_A}-\bfPhi_t\bfPhi_t^\top)\bfSigma\bfPhi_t,\\
        \G_t\G_t^\top&=\X_t^\top\A(\I_m-\X_t\X_t^\top)^2\A\X_t\\
        &=\X_t^\top\A(\I_m-\X_t\X_t^\top)\A\X_t\\
        &=\bfPhi_t\bfSigma(\I_{r_A}-\bfPhi_t\bfPhi_t^\top)\bfSigma\bfPhi_t.
    \end{align*}
    
    Since $\sigma_1(\bfSigma),\sigma_1(\bfPhi_t)\leq1$, we obtain that $\beta_t:=\sigma_1(\I_{r_A}-\bfPhi_t\bfPhi_t^\top)\geq\sigma_1(\G_t^\top\G_t)$.
    
    From the update of $\X_t$, we have that
    \begin{align*}
        \X_{t+1}\X_{t+1}^\top=(\X_t-\eta\G_t)(\I_r+\eta^2\G_t^\top\G_t)^{-1}(\X_t-\eta\G_t)^\top.
    \end{align*}
    
    Multiplying both sides on the left by $\U^\top$ and on the right by $\U$, it follows that
    \begin{align}\label{form_PhiPhi_top}
        \bfPhi_{t+1}\bfPhi_{t+1}^\top&\stackrel{(a)}{=}(\I_{r_A}+\eta(\I_{r_A}-\bfPhi_t\bfPhi_t^\top)\bfSigma)\bfPhi_t(\I_r+\eta^2\G_t^\top\G_t)^{-1}\bfPhi_t^\top(\I_{r_A}+\eta\bfSigma(\I_{r_A}-\bfPhi_t\bfPhi_t^\top))^\top\nonumber\\
        &\stackrel{(b)}{\succeq}(1-\eta^2\G^\top_t\G_t)(\I_{r_A}+\eta(\I_{r_A}-\bfPhi_t\bfPhi_t^\top)\bfSigma)\bfPhi_t\bfPhi_t^\top(\I_{r_A}+\eta\bfSigma(\I_{r_A}-\bfPhi_t\bfPhi_t^\top))^\top\nonumber\\
        &\succeq(1-\eta^2\beta_t)(\I_{r_A}+\eta(\I_{r_A}-\bfPhi_t\bfPhi_t^\top)\bfSigma)\bfPhi_t\bfPhi_t^\top(\I_{r_A}+\eta\bfSigma(\I_{r_A}-\bfPhi_t\bfPhi_t^\top))^\top,
    \end{align}
    where $(a)$ is from the expression of $\U^\top\G_t$; and $(b)$ is by Lemma \ref{apdx.lemma.inverse}.

    Let the SVD of $\bfPhi_t$ be $\Q_t\bfLambda_t\PP_t^\top,\Q_t\in\mathbb{R}^{r_A\times r_A},\bfLambda_t\in\mathbb{R}^{r_A\times r_A},\PP_t\in\mathbb{R}^{r\times r_A};\bfS_t=\Q_t^\top\bfSigma\Q_t$.

    Substituting $\bfPhi_t=\Q_t\bfLambda_t\PP_t^\top$ into \eqref{form_PhiPhi_top}, we have that
    \begin{align*}
        \bfPhi_{t+1}\bfPhi_{t+1}^\top&\succeq(1-\eta^2\beta_t)\Q_t[\I_{r_A}+\eta(\I_{r_A}-\bfLambda_t^2)\bfS_t]\bfLambda_t^2[\I_{r_A}+\eta\bfS_t(\I_{r_A}-\bfLambda_t^2)]\Q_t^\top\\
        &\succeq(1-\eta^2\beta_t)\Q_t[\bfLambda_t^2+\eta(\I_{r_A}-\bfLambda_t^2)\bfS_t\bfLambda_t^2+\eta\bfLambda_t^2\bfS_t(\I_{r_A}-\bfLambda_t^2)]\Q_t^\top.
    \end{align*}
    
    Taking trace on both sides, it follows that
    \begin{align}\label{form_ineq_Phi}
        (1-\eta^2\beta_t)^{-1}\tr(\bfPhi_{t+1}\bfPhi_{t+1}^\top)&\geq\tr(\bfPhi_t\bfPhi_t^\top)+\eta\tr((\I_{r_A}-\bfLambda_t^2)\bfS_t\bfLambda_t^2+\bfLambda_t^2\bfS_t(\I_{r_A}-\bfLambda_t^2))\nonumber\\
        &\geq\tr(\bfPhi_t\bfPhi_t^\top)+2\eta\sigma_{r_A}(\bfS_t)\tr((\I_{r_A}-\bfLambda_t^2)\bfLambda_t^2)\nonumber\\
        &=\tr(\bfPhi_t\bfPhi_t^\top)+\frac{2\eta}{\kappa}\tr((\I_{r_A}-\bfLambda_t^2)\bfLambda_t^2)\nonumber\\
        &=\tr(\bfPhi_t\bfPhi_t^\top)+\frac{2\eta}{\kappa}\tr((\I_{r_A}-\bfPhi_t\bfPhi_t^\top)\bfPhi_t\bfPhi_t^\top).
    \end{align}

    By rewriting \eqref{form_ineq_Phi}, we arrive at 
    \begin{align}\label{form_core_ineq}
        \tr(\bfPhi_{t+1}\bfPhi_{t+1}^\top)-\tr(\bfPhi_t\bfPhi_t^\top)\geq\frac{2\eta}{\kappa}(1-\eta^2\beta_t)\tr((\I_{r_A}-\bfPhi_t\bfPhi_t^\top)\bfPhi_t\bfPhi_t^\top)-\eta^2\beta_t\tr(\bfPhi_t\bfPhi_t^\top).
    \end{align}

    Phase I: $\tr(\I_{r_A}-\bfPhi_t\bfPhi_t^\top)\geq0.5$.

    From \eqref{form_core_ineq}, we obtain that
    \begin{align*}
        \tr(\bfPhi_{t+1}\bfPhi_{t+1}^\top)-\tr(\bfPhi_t\bfPhi_t^\top)&\stackrel{(c)}{\geq}\frac{2\eta}{\kappa}(1-\eta^2\beta_t)\sigma_{r_A}^2(\bfPhi_t)\tr(\I_{r_A}-\bfPhi_t\bfPhi_t^\top)-\eta^2\beta_t\tr(\bfPhi_t\bfPhi_t^\top)\\
        &\stackrel{(d)}{\geq}\frac{\eta}{\kappa}(1-\eta^2)\sigma_{r_A}^2(\bfPhi_t)\tr(\I_{r_A}-\bfPhi_t\bfPhi_t^\top)-\eta^2r_A\\
        &\stackrel{(e)}{\geq}\frac{\eta(r-r_A)^2}{2c_1mr\kappa}(1-\eta^2)-\eta^2r_A\\
        &\stackrel{(f)}{\geq}\frac{(r-r_A)^4}{64c_1^2m^2r^2r_A\kappa^2},
    \end{align*}
    where $(c)$ is from Lemma \ref{lemma.apdx.trace-lower-bound}; $(d)$ is by $\beta_t=\sigma_1(\I_{r_A}-\bfPhi_t\bfPhi_t^\top)\leq1$ and $\tr(\bfPhi_t\bfPhi_t^\top)\leq r_A$; $(e)$ follows from Lemma \ref{init_with_high_probability} and Lemma \ref{lemma.singular_ascending}; and $(f)$ is by our choice of $\eta$.

    This inequality implies that at each step, $\tr(\bfPhi_t\bfPhi_t^\top)$ increases at least by $\Delta=\frac{(r-r_A)^4}{64c_1^2m^2r^2r_A\kappa^2}$. Consequently, after at most $(r_A-0.5)/\Delta\leq\frac{64c_1^2m^2r^2r_A^2\kappa^2}{(r-r_A)^4}$ iterations, DMD leaves Phase I.

    Phase II: $\tr(\I_{r_A}-\bfPhi_t\bfPhi_t^\top)<0.5$.

    This corresponds to a near-optimal regime. An immediate implication of this phase is that $\tr(\bfPhi_t\bfPhi_t^\top) \geq  r_A - 0.5$. Given that the singular values of $\bfPhi_t\bfPhi_t^\top$ lie in $[0,1]$, we have $\sigma_{r_A}^2(\bfPhi_t)\geq0.5$. Together with $\beta_t=\sigma_1(\I_{r_A}-\bfPhi_t\bfPhi_t^\top)\leq0.5$, we can simplify \eqref{form_core_ineq} as
    \begin{align}\label{form_phase_2}
        \tr(\bfPhi_{t+1}\bfPhi_{t+1}^\top)-\tr(\bfPhi_t\bfPhi_t^\top)&\geq\frac{2\eta}{\kappa}(1-\eta^2\beta_t)\sigma_{r_A}^2(\bfPhi_t)\tr(\I_{r_A}-\bfPhi_t\bfPhi_t^\top)-\eta^2\beta_t\tr(\bfPhi_t\bfPhi_t^\top)\nonumber\\
        &\geq\frac{\eta}{\kappa}(1-\eta^2\beta_t)\tr(\I_{r_A}-\bfPhi_t\bfPhi_t^\top)-\eta^2\beta_t\tr(\bfPhi_t\bfPhi_t^\top)\nonumber\\
        &\geq\frac{\eta}{\kappa}(1-\frac{\eta^2}{2})\tr(\I_{r_A}-\bfPhi_t\bfPhi_t^\top)-\eta^2\beta_t\tr(\bfPhi_t\bfPhi_t^\top)\nonumber\\
        &\geq\frac{\eta}{\kappa}(1-\frac{\eta^2}{2})\tr(\I_{r_A}-\bfPhi_t\bfPhi_t^\top)-\eta^2\beta_tr_A.
    \end{align}

    Since $\beta_t=\sigma_1(\I_{r_A}-\bfPhi_t\bfPhi_t^\top)\leq\tr(\I_{r_A}-\bfPhi_t\bfPhi_t^\top)$, we can further simplify \eqref{form_phase_2} as
    \begin{align*}
        \tr(\I_{r_A}-\bfPhi_{t+1}\bfPhi_{t+1}^\top)&\leq[1-\frac{\eta}{\kappa}(1-\frac{\eta^2}{2})+\eta^2r_A]\tr(\I_{r_A}-\bfPhi_t\bfPhi_t^\top)\\
        &=(1-\frac{\eta}{\kappa}+\eta^2r_A+\frac{\eta^3}{2\kappa})\tr(\I_{r_A}-\bfPhi_t\bfPhi_t^\top)\\
        &\stackrel{(g)}{\leq}(1-\frac{c_\eta(r-r_A)^2}{2mrr_A\kappa^2})\tr(\I_{r_A}-\bfPhi_t\bfPhi_t^\top),
    \end{align*}
    where $(g)$ is by our choice of $\eta$.

    Combining the analysis of Phase I, II and Lemma \ref{lemma.required-acc-sensing}, we have that after at most $$t_\varepsilon=\frac{64c_1^2m^2r^2r_A^2\kappa^2}{(r-r_A)^4}+\frac{\log(\varepsilon)}{\log(1-\frac{c_\eta(r-r_A)^2}{2mrr_A\kappa^2})}\leq\frac{64c_1^2m^2r^2r_A^2\kappa^2}{(r-r_A)^4}+\frac{8mrr_A\kappa^2}{c_\eta(r-r_A)^2}\log(\frac{1}{\varepsilon})$$ iterations, we have $f(\X_{t_\varepsilon},\bfTheta_{t_\varepsilon})\leq\varepsilon$.
    \end{proof}

\subsection{Proof of Lemma \ref{lemma_saddle_points}} \label{supplement.proofs.lemma_saddle_points}
\leavevmode\par
\begin{proof}
    Let $\U\bfSigma\U^\top$ be the compact SVD of $\A$, where $\U=[\uu_1,\uu_2,\ldots,\uu_{r_A}]$ and $\bfSigma=\diag(\lambda_1,\lambda_2,\ldots,\lambda_{r_A})$, with $\lambda_1\geq\lambda_2\geq\cdots\geq\lambda_{r_A}>0$. Here, $\diag(\lambda_1,\lambda_2,\ldots,\lambda_{r_A})$ denotes the diagonal matrix whose diagonal entries are $\lambda_1,\lambda_2,\ldots,\lambda_{r_A}$.

We first consider $\rho\geq1$. From the Eckart–Young–Mirsky theorem, we have that the best rank-$\rho$ approximation of $\A$ under the Frobenius norm is $\A_\rho=\U_1\bfSigma_1\U_1^\top$, where $\U_1=[\uu_1,\uu_2,\ldots,\uu_\rho]$ and $\bfSigma_1=\diag(\lambda_1,\lambda_2,\ldots,\lambda_\rho)$, without considering the ordering of the eigenvalues. 

We begin by analyzing the form of $\X$ and $\bfTheta$. Since $\rank(\A_\rho)=\rank(\U_1)=\rho$ and $\range(\A_\rho)\subseteq\range(\U_1)$, it follows that $\range(\A_\rho)=\range(\U_1)$. Together with $\range(\A_\rho)=\range(\X\bfTheta\X^\top)\subseteq\range(\X)$, we can obtain $\range(\U_1)\subseteq\range(\X)$. Therefore, there exists a matrix $\Q\in\mathbb{R}^{r\times \rho}$, such that $\U_1=\X\Q$. By the definition of $\U_1$, we derive that
\[
    \U_1^\top\U_1=\Q^\top\X^\top\X\Q=\Q^\top\Q=\I_\rho,
\]
which implies that $\Q$ is a column-orthonormal matrix.

We extend $\Q$ to an $r \times r$ orthogonal matrix $\tilde{\Q}=[\Q,\PP]$. Let $\V_1=\X\PP$, then $[\U_1,\V_1]=[\X\Q,\X\PP]=\X\tilde{\Q}$. Since $\tilde{\Q}^\top\X^\top\X\tilde{\Q}=\tilde{\Q}^\top\tilde{\Q}=\I_r$, then $[\U_1,\V_1]$ is also a column-orthonormal matrix, which means that
\[
    \V_1=[\bfv_1,\bfv_2,\ldots,\bfv_{r-\rho}],\,\text{with}\,\bfv_1,\bfv_2,\ldots,\bfv_{r-\rho}\in\U_1^\perp;\,\V_1^\top\V_1=\I_{r-\rho}.
\]

Let $\U_2=[\uu_{\rho+1},\uu_{\rho+2},\ldots,\uu_{r_A}]$, and then $\U=[\U_1,\U_2]$. By substituting $\U$ and $\X$, we obtain
\begin{align*}
    \X^\top\U&=\tilde{\Q}\begin{bmatrix}
        \U_1^\top\\
        \V_1^\top
    \end{bmatrix}[\U_1,\U_2]\\
    &\stackrel{(a)}{=}\tilde{\Q}\begin{bmatrix}
        \I_\rho & \bm{0}\\
        \bm{0} & \V_1^\top\U_2
    \end{bmatrix},
\end{align*}
where $(a)$ is from $\bfv_1,\bfv_2,\ldots,\bfv_{r-\rho}\in\U_1^\perp$.

From $\bfTheta=\X^\top\A\X$ and $\A=\U\bfSigma\U^\top$, we have that $\bfTheta=\X^\top\U\bfSigma\U^\top\X$.

Let $\C:=\V_1^\top\U_2\in\mathbb{R}^{(r-\rho)\times(r_A-\rho)}$ and substituting the expression of $\X^\top\U$ into $\bfTheta$, we obtain
\begin{align*}
    \bfTheta&=\tilde{\Q}
    \begin{bmatrix}
        \I_\rho & \bm{0}\\
        \bm{0} & \C
    \end{bmatrix}
    \begin{bmatrix}
        \bfSigma_1 & \bm{0}\\
        \bm{0} & \bfSigma_2
    \end{bmatrix}
    \begin{bmatrix}
        \I_\rho & \bm{0}\\
        \bm{0} & \C^\top
    \end{bmatrix}
    \tilde{\Q}^\top\\
    &=\tilde{\Q}\begin{bmatrix}
        \bfSigma_1 & \bm{0}\\
        \bm{0} & \C\bfSigma_2\C^\top
    \end{bmatrix}\tilde{\Q}^\top,
\end{align*}
where $\bfSigma_1=\diag(\lambda_1,\ldots,\lambda_{\rho})$ and $\bfSigma_2=\diag(\lambda_{\rho+1},\ldots,\lambda_{r_A})$.

Then, we have that
\begin{align*}
    \X\bfTheta\X^\top&=\X\tilde{\Q}\begin{bmatrix}
        \bfSigma_1 & \bm{0}\\
        \bm{0} & \C\bfSigma_2\C^\top
    \end{bmatrix}\tilde{\Q}^\top\X^\top\\
    &=[\U_1,\V_1]\begin{bmatrix}
        \bfSigma_1 & \bm{0}\\
        \bm{0} & \C\bfSigma_2\C^\top
    \end{bmatrix}[\U_1,\V_1]^\top\\
    &=\U_1\bfSigma_1\U_1^\top+\V_1\C\bfSigma_2\C^\top\V_1^\top.
\end{align*}
Together with $\X\bfTheta\X^\top=\A_\rho=\U_1\bfSigma_1\U_1^\top$, it follows that $\V_1\C\bfSigma_2\C^\top\V_1^\top=\bm{0}$. Multiplying $\V_1^\top$ on the left side and $\V_1$ on the right side, and applying $\V_1^\top\V_1=\I_{r-\rho}$, we have that
\begin{align*}
    \C\bfSigma_2\C^\top=\bm{0}.
\end{align*}
Since $\bfSigma_2=\diag(\lambda_{\rho+1},\ldots,\lambda_{r_A})\succ\bm{0}$, we obtain $\C=\V_1^\top\U_2=\bm{0}$. 

This implies that $\bfv_1,\bfv_2,\ldots,\bfv_{r-\rho}\in\U_2^\perp$. Moreover, since $\bfv_1,\bfv_2,\ldots,\bfv_{r-\rho}\in\U_1^\perp$ as well, we conclude that $\bfv_1,\bfv_2,\ldots,\bfv_{r-\rho}\in\U^\perp$.

Substituting $\X=[\U_1,\V_1]\tilde{\Q}^\top$ into $\X\bfTheta\X^\top=\A_\rho$, we can obtain
\begin{align*}
    [\U_1,\V_1]\tilde{\Q}^\top\bfTheta\tilde{\Q}[\U_1,\V_1]^\top&=\A_\rho\\
    &=\U_1\diag(\lambda_1,\lambda_2,\ldots,\lambda_\rho)\U_1^\top\\
    &=[\U_1,\V_1]\diag(\lambda_1,\lambda_2,\ldots,\lambda_\rho,0,\ldots,0)[\U_1,\V_1]^\top.
\end{align*}

Expanding both sides of the equation, together with $\bfv_1,\bfv_2,\ldots,\bfv_{r-\rho}\in\U^\perp$
, we can obtain
\begin{align*}
\tilde{\Q}^\top\bfTheta\tilde{\Q}=\diag(\lambda_1,\lambda_2,\ldots,\lambda_\rho,0,\ldots,0).
\end{align*}
This implies that
\[
    \bfTheta=\tilde{\Q}\diag(\lambda_1,\lambda_2,\ldots,\lambda_\rho,0,\ldots,0)\tilde{\Q}^\top.
\]

To proceed, we first verify that $(\tilde{\X},\tilde{\bfTheta}):=\left([\U_1,\V_1],\diag(\lambda_1,\lambda_2,\ldots,\lambda_\rho,0,\ldots,0)\right)$ is indeed a saddle point and then prove $(\X,\bfTheta)$ is also a saddle point.

The Riemannian gradient w.r.t. $\X$, denoted by $\G$ in \eqref{rgd_X}, evaluated at $\tilde{\X}$, can be written as
\begin{align*}
    \G&=(\I_m-\tilde{\X}\tilde{\X}^\top)(\tilde{\X}\tilde{\X}^\top-\A)\tilde{\X}+\frac{1}{2}\tilde{\X}(\tilde{\X}^\top(\tilde{\X}\tilde{\X}^\top-\A)-(\tilde{\X}\tilde{\X}^\top-\A)^\top\tilde{\X})\\
    &=-\A\tilde{\X}+\tilde{\X}\tilde{\X}^\top\A\tilde{\X}\\
    &=-\U_1\bfSigma_1+(\U_1\U_1^\top+\V_1\V_1^\top)\U_1\bfSigma_1\\
    &=\bm{0}.
\end{align*}

Consequently, $\tilde{\X}$ remains unchanged after the update.

Then, the Euclidean gradient w.r.t. $\bfTheta$ evaluated at $\tilde{\bfTheta}$ is
\begin{align*}
    \frac{1}{2}(\tilde{\bfTheta}-\tilde{\X}^\top\A\tilde{\X})=\bm{0}.
\end{align*}

Therefore, $(\tilde{\X},\tilde{\bfTheta})$ is a stationary point in the Riemannian sense.

We now show that $(\tilde{\X},\tilde{\bfTheta})$ is neither a local minimum nor a local maximum of the objective function.

For any $0<\nu<\lambda_{r_A}$, we will construct a pair $(\tilde{\X}_+,\tilde{\bfTheta}_+)$, such that $f(\tilde{\X}_+,\tilde{\bfTheta}_+)>f(\tilde{\X},\tilde{\bfTheta})$, 
$d\left((\tilde{\X}_+,\tilde{\bfTheta}_+),(\tilde{\X},\tilde{\bfTheta})\right):=\sqrt{\|\tilde{\X}_+-\tilde{\X}\|_\fro^2+\|\tilde{\bfTheta}_+-\tilde{\bfTheta}\|_\fro^2}\leq\nu$ and $\tilde{\X}_+^\top\tilde{\X}_+=\I_r$.

Let $\tilde{\X}_+=\tilde{\X}=[\U_1,\V_1]$ and $\tilde{\bfTheta}_+=\diag(\lambda_1-\nu,\lambda_2,\ldots,\lambda_\rho,0,\ldots,0)$. By construction, $\tilde{\X}_+^\top\tilde{\X}_+=\I_r$ and $d\left((\tilde{\X}_+,\tilde{\bfTheta}_+),(\tilde{\X},\tilde{\bfTheta})\right)=\sqrt{\nu^2}\leq\nu$ hold. The value of the objective function is
\begin{align*}
    f(\tilde{\X}_+,\tilde{\bfTheta}_+)&=\frac{1}{4}\|\tilde{\X}_+\tilde{\bfTheta}_+\tilde{\X}_+^\top-\A\|_\fro^2\\
    &=\frac{1}{4}\|(\lambda_1-\nu)\uu_1\uu_1^\top+\sum_{j=2}^{\rho}\lambda_j\uu_j\uu_j^\top-\sum_{j=1}^{r_A}\lambda_j\uu_j\uu_j^\top\|_\fro^2\\
    &=\frac{1}{4}\|\nu \uu_1\uu_1^\top+\sum_{j=\rho+1}^{r_A}\lambda_j\uu_j\uu_j^\top\|_\fro^2\\
    &\stackrel{(b)}{=}\frac{1}{4}(\nu^2\|\uu_1\uu_1^\top\|_\fro^2+\|\tilde{\X}\tilde{\bfTheta}\tilde{\X}^\top-\A\|_\fro^2)\\
    &>f(\tilde{\X},\tilde{\bfTheta}),
\end{align*}
where $(b)$ is by the orthogonality of $\{\uu_1,\uu_2,\ldots,\uu_{r_A}\}$.

We now try to construct a pair $(\tilde{\X}_-,\tilde{\bfTheta}_-)$, such that $f(\tilde{\X}_-,\tilde{\bfTheta}_-)<f(\tilde{\X},\tilde{\bfTheta})$,\\
$d\left((\tilde{\X}_-,\tilde{\bfTheta}_-),(\tilde{\X},\tilde{\bfTheta})\right):=\sqrt{\|\tilde{\X}_--\tilde{\X}\|_\fro^2+\|\tilde{\bfTheta}_--\tilde{\bfTheta}\|_\fro^2}\leq\nu$, and $\tilde{\X}_-^\top\tilde{\X}_-=\I_r$.

Since $\bfv_j\in\U^\perp$ for any $j\in\{1,2,\ldots,r-\rho\}$, it follows that $\bfv_j\in\Span\{\uu_{r_A+1},\ldots,\uu_m\}$. Accordingly, we consider
\begin{align*}
    \tilde{\X}_-&=[\U_1,k\bfv_1+s\uu_{\rho+1},\bfv_2,\ldots,\bfv_{r-\rho}],\\
    \tilde{\bfTheta}_-&=\diag(\lambda_1,\lambda_2,\ldots,\lambda_\rho,\nu_0,0,\ldots,0),
\end{align*}
where $k,s,\nu_0>0,k^2+s^2=1$ and $k,s,\nu_0$ will be given later.
We can easily verify that $\tilde{\X}_-^\top\tilde{\X}_-=\I_r$ holds. The distance is
\begin{align*}
    d\left((\tilde{\X}_-,\tilde{\bfTheta}_-),(\tilde{\X},\tilde{\bfTheta})\right)&=\sqrt{\|\tilde{\X}_--\tilde{\X}\|_\fro^2+\|\tilde{\bfTheta}_--\tilde{\bfTheta}\|_\fro^2}\\
    &=\sqrt{\|(k-1)\bfv_1+s\uu_{\rho+1}\|^2+\nu_0^2}\\
    &=\sqrt{(k-1)^2+s^2+\nu_0^2}\\
    &=\sqrt{2-2k+\nu_0^2}.
\end{align*}

Let $k=1-\frac{\nu^2}{4}$, $s=\sqrt{1-k^2}$ and $\nu_0\leq\frac{\nu}{2}$, then $d\left((\tilde{\X}_-,\tilde{\bfTheta}_-),(\tilde{\X},\tilde{\bfTheta})\right)\leq\sqrt{\frac{\nu^2}{2}+\frac{\nu^2}{4}}\leq\nu$.
The value of the objective function is
\begin{align*}
    &f(\tilde{\X}_-,\tilde{\bfTheta}_-)\\
    &=\frac{1}{4}\|\tilde{\X}_-\tilde{\bfTheta}_-\tilde{\X}^\top-\A\|_\fro^2\\
    &=\frac{1}{4}\|\nu_0(k^2\bfv_1\bfv_1^\top+ks\uu_{\rho+1}\bfv_1^\top+ks\bfv_1\uu_{\rho+1}^\top)+(\nu_0s^2-\lambda_\rho)\uu_{\rho+1}\uu_{\rho+1}^\top-\sum_{j=\rho+2}^{r_A}\lambda_j\uu_j\uu_j^\top\|_\fro^2\\
    &\stackrel{(c)}{=}\frac{1}{4}\left(\nu_0^2(k^4+k^2s^2\|\uu_{\rho+1}\bfv_1^\top\|_\fro^2+k^2s^2\|\bfv_1\uu_{\rho+1}^\top\|_\fro^2)+(\nu_0s^2-\lambda_\rho)^2\right)+f(\tilde{\X},\tilde{\bfTheta})-\frac{1}{4}\lambda_\rho^2\\
    &=\frac{1}{4}\nu_0^2\left(k^4+2k^2s^2+s^4\right)-\frac{1}{2}\nu_0\lambda_\rho s^2+f(\tilde{\X},\tilde{\bfTheta}),
\end{align*}
where $(c)$ is from the orthogonality of $\{\uu_1,\uu_2,\ldots,\uu_{r_A},\bfv_1\}$.
Let $\nu_0>0$ be sufficiently small. Then $\frac{1}{4}\nu_0^2\left(k^4+2k^2s^2+s^4\right)-\frac{1}{2}\nu_0\lambda_\rho s^2<0$. This ensures that the perturbed pair leads to a strictly smaller objective value, i.e., $f(\tilde{\X}_-,\tilde{\bfTheta}_-)<f(\tilde{\X},\tilde{\bfTheta})$.

Therefore, we have verified that $(\tilde{\X},\tilde{\bfTheta})$ is a saddle point. Building upon this result, we now proceed to show that $(\X,\bfTheta)=(\tilde{\X}\tilde{\Q}^\top, \tilde{\Q}\tilde{\bfTheta}\tilde{\Q}^\top)$ is also a saddle point.

Plugging in the expression of $(\X,\bfTheta)$, we obtain the Riemannian gradient w.r.t. $\X$ as follows:
\begin{align*}
     \G&=-\A\X+\X\X^\top\A\X\\
       &=-\A\tilde{\X}\tilde{\Q}^\top+\tilde{\X}\tilde{\X}^\top\A\tilde{\X}\tilde{\Q}^\top\\
       &=(-\A\tilde{\X}+\tilde{\X}\tilde{\X}^\top\A\tilde{\X})\tilde{\Q}^\top\\
       &=\bm{0}.
\end{align*}
And the Euclidean gradient w.r.t. $\bfTheta$ is 
\begin{align*}
    \frac{1}{2}(\bfTheta-\X^\top\A\X)&=\frac{1}{2}(\tilde{\Q}\tilde{\bfTheta}\tilde{\Q}^\top-\tilde{\Q}\tilde{\X}^\top\A\tilde{\X}\tilde{\Q}^\top)\\
    &=\frac{1}{2}\tilde{\Q}(\tilde{\bfTheta}-\tilde{\X}^\top\A\tilde{\X})\tilde{\Q}^\top\\
    &=\bm{0}.
\end{align*}

Therefore, $(\X,\bfTheta)$ is a stationary point in the Riemannian sense.

Let $(\X_+,\bfTheta_+)=(\tilde{\X}_+\tilde{\Q}^\top,\tilde{\Q}\bfTheta_+\tilde{\Q}^\top)$, $(\X_-,\bfTheta_-)=(\tilde{\X}_-\tilde{\Q}^\top,\tilde{\Q}\bfTheta_-\tilde{\Q}^\top)$. The distance is
\begin{align*}
    d\left((\X_+,\bfTheta_+),(\X,\bfTheta)\right)&=\sqrt{\|\X_+-\X\|_\fro^2+\|\bfTheta_+-\bfTheta\|_\fro^2}\\
    &=\sqrt{\|(\tilde{\X}_+-\tilde{\X})\tilde{\Q}^\top\|_\fro^2+\|\tilde{\Q}(\tilde{\bfTheta}_+-\tilde{\bfTheta})\tilde{\Q}^\top\|_\fro^2}\\
    &=\sqrt{\|\tilde{\X}_+-\tilde{\X}\|_\fro^2+\|\tilde{\bfTheta}_+-\tilde{\bfTheta}\|_\fro^2}\\
    &=d\left((\tilde{\X}_+,\tilde{\bfTheta}_+),(\tilde{\X},\tilde{\bfTheta})\right).
\end{align*}
In the same manner, we can obtain that $d\left((\X_-,\bfTheta_-),(\X,\bfTheta)\right)=d\left((\tilde{\X}_-,\tilde{\bfTheta}_-),(\tilde{\X},\tilde{\bfTheta})\right)$.
By the orthogonality of $\tilde{\Q}$, the following three identities hold:
\begin{align*}
    \X\bfTheta\X^\top&=\tilde{\X}\tilde{\Q}^\top\tilde{\Q}\tilde{\bfTheta}\tilde{\Q}^\top\tilde{\Q}\tilde{\X}^\top=\tilde{\X}\tilde{\bfTheta}\tilde{\X}^\top,\\
    \X_+\bfTheta_+\X_+^\top&=\tilde{\X}_+\tilde{\Q}^\top\tilde{\Q}\tilde{\bfTheta}_+\tilde{\Q}^\top\tilde{\Q}\tilde{\X}_+^\top=\tilde{\X}_+\tilde{\bfTheta}_+\tilde{\X}_+^\top,\\
    \X_-\bfTheta_-\X_-^\top&=\tilde{\X}_-\tilde{\Q}^\top\tilde{\Q}\tilde{\bfTheta}_-\tilde{\Q}^\top\tilde{\Q}\tilde{\X}_-^\top=\tilde{\X}_-\tilde{\bfTheta}_-\tilde{\X}_-^\top.
\end{align*}

Then, we have $f(\X,\bfTheta)=f(\tilde{\X},\tilde{\bfTheta})$, $f(\X_+,\bfTheta_+)=f(\tilde{\X}_+,\tilde{\bfTheta}_+)$, $f(\X_-,\bfTheta_-)=f(\tilde{\X}_-,\tilde{\bfTheta}_-)$. Thus, we obtain the strict inequality $f(\X_-,\bfTheta_-)<f(\X,\bfTheta)<f(\X_+,\bfTheta_+)$.
Therefore, $(\X,\bfTheta)$ is also a saddle point.

We now turn to the case $\rho=0$, i.e., $\X\bfTheta\X^\top=\A_0=\bm{0}$. Consequently, $\bfTheta=\X^\top\A_0\X=\X^\top\A\X=\bm{0}$. Let $\A^{1/2}:=\U\bfSigma^{1/2}\U^\top$, where $\bfSigma^{1/2}=\diag(\lambda_1^{1/2},\ldots,\lambda_{r_A}^{1/2})$. Then, we have that 
\begin{align*}
    0&=\tr(\X^\top\A\X)\\
    &=\tr(\X^\top(\A^{1/2})^\top\A^{1/2}\X)\\
    &=\|\A^{1/2}\X\|_\fro^2.
\end{align*}
Thus, it is guaranteed that $\A^{1/2}\X=\bm{0}$, which implies that $\A\X=\A^{1/2}\A^{1/2}\X=\bm{0}$. 

Multiplying both sides with $\U^\top$, we arrive at $\bfSigma\U^\top\X=\bm{0}$. Together with $\bfSigma\succ\bm{0}$, we have that $\U^\top\X=\bm{0}$. Let $\X$ be expressed as $\X = [\mathbf{x}_1, \mathbf{x}_2, \ldots, \mathbf{x}_r]$, where each $\mathbf{x}_j$ is a column vector. Hence, each $\mathbf{x}_j$ lies in $\U^\perp$ for $j\in\{1,2,\ldots,r\}$.

Therefore, the Riemannian gradient w.r.t. $\X$ is that
\begin{align*}
    \G&=-\A\X+\X\X^\top\A\X\\
    &=(-\U\bfSigma+\X\X^\top\U\bfSigma)\U^\top\X\\
    &=\bm{0}.
\end{align*}
And the Euclidean gradient w.r.t. $\bfTheta$ admits that
\begin{align*}
    \frac{1}{2}(\bfTheta-\X^\top\A\X)&=\bm{0}.
\end{align*}

Therefore, $(\X,\bfTheta)$ is a stationary point in the Riemannian sense.

For any $0<\nu<\lambda_{r_A}$, we construct the pair $(\X_+,\bfTheta_+)$ as follows:
\begin{align*}
    \X_+&=[k\mathbf{x}_1+s\uu_1,\mathbf{x}_2,\ldots,\mathbf{x}_r],\\
    \bfTheta_+&=\diag(-\nu_1,0,\ldots,0),
\end{align*}
where $k=1-\frac{\nu^2}{4},s=\sqrt{1-k^2}$, and $0<\nu_1\leq\frac{\nu}{2}$.
We can easily verify that $\X_+^\top\X_+=\I_r$ and the distance is
\begin{align*}
    d\left((\X_+,\bfTheta_+),(\X,\bfTheta)\right)&=\sqrt{\|\X_+-\X\|_\fro^2+\|\bfTheta_+-\bfTheta\|_\fro^2}\\
    &=\sqrt{\|(k-1)\mathbf{x}_1+s\uu_1\|^2+\nu_1^2}\\\
    &=\sqrt{(k-1)^2+s^2+\nu_1^2}\\
    &\leq\sqrt{\frac{\nu^2}{2}+\frac{\nu^2}{4}}\\
    &\leq\nu.
\end{align*}

The value of the objective function is 
\begin{align*}
    f(\X_+,\bfTheta_+)&=\frac{1}{4}\|\X_+\bfTheta_+\X_+^\top-\A\|_\fro^2\\
    &=\frac{1}{4}\|-\nu_1\left(k^2\mathbf{x}_1\mathbf{x}_1^\top+s^2\uu_1\uu_1^\top\right)-\sum_{j=1}^{r_A}\lambda_j \uu_j\uu_j^\top\|_\fro^2\\
    &=\frac{1}{4}\|\nu_1k^2\mathbf{x}_1\mathbf{x}_1^\top+\nu_1s^2\uu_1\uu_1^\top+\sum_{j=1}^{r_A}\lambda_j \uu_j\uu_j^\top\|_\fro^2\\
    &\stackrel{(d)}{=}\frac{1}{4}\left(\nu_1^2k^4+\nu_1^2s^4+2\nu_1\lambda_1s^2+\|\X\bfTheta\X^\top-\A\|_\fro^2\right)\\
    &>f(\X,\bfTheta),
\end{align*}
where $(d)$ is due to the orthogonality of $\{\uu_1,\uu_2,\ldots,\uu_{r_A},\mathbf{x}_1\}$.
Now consider the pair $(\X_-,\bfTheta_-)$ defined as:
\begin{align*}
    \X_-&=[k\mathbf{x}_1+s\uu_1,\mathbf{x}_2,\ldots,\mathbf{x}_r],\\
    \bfTheta_-&=\diag(\nu_2,0,\ldots,0),
\end{align*}
where $k=1-\frac{\nu^2}{4},s=\sqrt{1-k^2}$, and $0<\nu_2\leq\frac{\nu}{2}$.
It can be verified that $\X_-^\top\X_-=\I_r$, and the distance is
\begin{align*}
    d\left((\X_-,\bfTheta_-),(\X,\bfTheta)\right)&=\sqrt{\|\X_--\X\|_\fro^2+\|\bfTheta_--\bfTheta\|_\fro^2}\\
    &=\sqrt{\|(k-1)\mathbf{x}_1+s\uu_1\|^2+\nu_2^2}\\\
    &=\sqrt{(k-1)^2+s^2+\nu_2^2}\\
    &\leq\sqrt{\frac{\nu^2}{2}+\frac{\nu^2}{4}}\\
    &\leq\nu.
\end{align*}

The value of the objective function is 
\begin{align*}
    f(\X_-,\bfTheta_-)&=\frac{1}{4}\|\X_-\bfTheta_-\X_-^\top-\A\|_\fro^2\\
    &=\frac{1}{4}\|\nu_2\left(k^2\mathbf{x}_1\mathbf{x}_1^\top+s^2\uu_1\uu_1^\top\right)-\sum_{j=1}^{r_A}\lambda_j \uu_j\uu_j^\top\|_\fro^2\\
    &=\frac{1}{4}\|\nu_2k^2\mathbf{x}_1\mathbf{x}_1^\top+\nu_2s^2\uu_1\uu_1^\top-\sum_{j=1}^{r_A}\lambda_j \uu_j\uu_j^\top\|_\fro^2\\
    &\stackrel{(e)}{=}\frac{1}{4}\left(\nu_2^2k^4+\nu_2^2s^4-2\nu_2\lambda_1s^2+\|\X\bfTheta\X^\top-\A\|_\fro^2\right)\\
    &=\frac{1}{4}\left(\nu_2^2(k^4+s^4)-2\nu_2\lambda_1s^2\right)+f(\X,\bfTheta),
\end{align*}
where $(e)$ is by the orthogonality of $\{\uu_1,\uu_2,\ldots,\uu_{r_A},\mathbf{x}_1\}$.
Let $\nu_2>0$ be sufficiently small. Then $\frac{1}{4}\left(\nu_2^2(k^4+s^4)-2\nu_2\lambda_1s^2\right)<0$. This guarantees that $f(\X_-,\bfTheta_-)<f(\X,\bfTheta)$.
Therefore, $(\X,\bfTheta)$ is also a saddle point when $\rho=0$.
\end{proof}

\subsection{Proof of Theorem \ref{theorem_saddle_to_saddle}}\label{supplement.proofs.theorem_saddle_to_saddle}
\leavevmode\par
\begin{proof}
    For simplicity, we assume that $\frac{\sigma_i(\A)}{\sigma_{i+1}(\A)}\geq\beta\geq10,i=1,2,\ldots,r_A-1$.
    
    Let $\phi_{i,j}(t):=\uu_i^\top\X(t)\X(t)^\top\uu_j$, $t\geq0$. By chain rule, we have that
    \begin{align}\label{form.dynamic.phi}
        \dot{\phi}_{i,i}(t)&=\uu_i^\top\dot{\X}(t)\X(t)^\top\uu_i+\uu_i^\top\X(t)\dot{\X}(t)^\top\uu_i\nonumber\\
        &=\uu_i^\top(\I_m-\X(t)\X(t)^\top)\A\X(t)\X(t)^\top\uu_i+\uu_i^\top\X(t)\X(t)^\top\A(\I_m-\X(t)\X(t)^\top)\uu_i\nonumber\\
        &\stackrel{(a)}{=}2\sigma_i\uu_i^\top\X(t)\X(t)^\top\uu_i-2\sum_{j=1}^{r_A}\sigma_j(\uu_i^\top\X(t)\X(t)^\top\uu_j)^2\nonumber\\
        &=2\sigma_i\phi_{i,i}(t)-2\sigma_i\phi_{i,i}^2(t)-2\sum_{j=1,j\neq i}^{r_A}\sigma_j\phi_{i,j}^2(t),
    \end{align}
    where $(a)$ is from the fact that $\uu_i^\top\A=\sigma_i\uu_i^\top$ and $\A=\sum_{j=1}^{r_A}\sigma_j\uu_j\uu_j^\top$.

    We first consider the dynamics of $\phi_{1,1}(t)$.

    From \eqref{form.dynamic.phi}, we have that
    \begin{align*}
        \dot{\phi}_{1,1}(t)&=2\sigma_1\phi_{1,1}(t)-2\sigma_1\phi_{1,1}^2(t)-2\sum_{j=2}^{r_A}\sigma_j\phi_{1,j}^2(t)\\
        &\geq2\sigma_1\phi_{1,1}(t)-2\sigma_1\phi_{1,1}^2(t)-2\sigma_2\sum_{j=2}^{r_A}\phi_{1,j}^2(t)\\
        &\stackrel{(b)}{\geq}2(\sigma_1-\sigma_2)\phi_{1,1}(t)-2(\sigma_1-\sigma_2)\phi_{1,1}^2(t),
    \end{align*}
    where $(b)$ is by Lemma \ref{apdx.lemma.control_second_order}.

    Let $y_1(t)\in\mathbb{R}$ and $\dot{y}_1(t)=2(\sigma_1-\sigma_2)y_1(t)-2(\sigma_1-\sigma_2)y_1^2(t)$, $y_1(0)=\phi_{1,1}(0)$.

    As stated in \cite{richards1959flexible}, the trajectory $y_1(t)$ admits the closed-form solution $$y_1(t)=\frac{1}{1+(-1+\frac{1}{y_1(0)})e^{-2(\sigma_1-\sigma_2)t}},\quad t\geq0.$$

    Let $\bar{T}_1\in\{t\geq0\,|\,y_1(t)=1-(\frac{r}{m})^3\}$ (Note that this is a singleton because $y_1(t)$ is strictly increasing with respect to $t$).
    
    We can obtain that
    \begin{align*}
        \bar{T}_1&=\frac{1}{2(\sigma_1-\sigma_2)}\log(\frac{(1-\xi^3)(-1+\frac{1}{y_1(0)})}{\xi^3})\\
        &\leq\frac{1}{2(\sigma_1-\sigma_2)}\log(\frac{1}{\xi^3y_1(0)})\\
        &\stackrel{(c)}{\leq}\frac{5}{2(1-\frac{1}{\beta})\sigma_1}\log(\frac{1}{\xi})\\
        &\leq\frac{4}{\sigma_1}\log(\frac{1}{\xi}),
    \end{align*}
    where $\xi:=\frac{r}{m}$ and $(c)$ is from $\sigma_1\geq\beta\sigma_2$ and Lemma \ref{apdx.lemma.init_r_m}, which shows that $y_1(0)\geq\frac{1}{2}\xi$ w.h.p..

    By the comparison principle \cite{hartman2002ordinary}, it follows that $\phi_{1,1}(t)\geq y_1(t)$ for all $t\geq0$.
    
    Thus, there exists $T_1\leq\bar{T}_1\leq\frac{4}{\sigma_1}\log(\frac{m}{r})$, s.t., $\phi_{1,1}(T_1)\geq1-\xi^3$.
    
    Again from $\dot{\phi}_{1,1}(t)\geq2(\sigma_1-\sigma_2)(\phi_{1,1}(t)-\phi_{1,1}^2(t))\geq0$, we have that $\phi_{1,1}(t)\geq\phi_{1,1}(T_1)\geq1-\xi^3$ for all $t\geq T_1$.

    We now prove the existence of $T_k,k\geq1$, such that $\phi_{j,j}(t)\geq1-\xi^3,j=1,2,\ldots,k$ for all $t\geq T_{k}$, by mathematical induction. Assume that $\phi_{j,j}(t)\geq 1-\xi^3,j=1,2,\ldots,k-1$ for all $t\geq T_{k-1}$.

    From \eqref{form.dynamic.phi}, we have that
    \begin{align*}
        \dot{\phi}_{k,k}(t)&=2\sigma_{k}\phi_{k,k}(t)-2\sigma_{k}\phi_{k,k}^2(t)-2\sum_{j=1,j\neq k}^{r_A}\sigma_j\phi_{k,j}^2(t)\\
        &\geq2\sigma_{k}\phi_{k,k}(t)-2\sigma_{k}\phi_{k,k}^2(t)-2\sum_{j=1}^{k-1}\sigma_j\phi_{k,j}^2(t)-2\sigma_{k+1}\sum_{j=k+1}^{r_A}\phi_{k,j}^2(t)\\
        &\stackrel{(d)}{\geq}2(\sigma_{k}-\sigma_{k+1})\phi_{k,k}(t)-2(\sigma_{k}-\sigma_{k+1})\phi_{k,k}^2(t)-2\sum_{j=1}^{k-1}\sigma_j\phi_{k,j}^2(t),
    \end{align*}
    where $(d)$ is by Lemma \ref{apdx.lemma.control_second_order}.

    From Lemma \ref{apdx.lemma.crossed_terms}, we have the following upper bounds
    \begin{align*}
        \phi_{k,j}^2(t)\leq(1-\phi_{k,k}(t))(1-\phi_{j,j}(t))\leq 1-\phi_{j,j}(t)\leq\xi^3,\quad j=1,2\ldots,k-1.
    \end{align*}

    Thus, $\dot{\phi}_{k,k}(t)\geq2(\sigma_{k}-\sigma_{k+1})\phi_{k,k}(t)-2(\sigma_{k}-\sigma_{k+1})\phi_{k,k}^2(t)-2\xi^3\sum_{j=1}^{k-1}\sigma_j$. Together with $\frac{\sigma_i}{\sigma_{i+1}}\geq\beta\geq10$, we obtain that 
    $$\dot{\phi}_{k,k}(t)\geq2(\sigma_{k}-\sigma_{k+1})\phi_{k,k}(t)-2(\sigma_{k}-\sigma_{k+1})\phi_{k,k}^2(t)-3\xi^3.$$

    From Lemma \ref{apdx.lemma.singular_ascending_cont} and Lemma \ref{apdx.lemma.diag_lower_bound}, we have that $\phi_{k,k}(t)\geq\sigma_{r_A}(\U^\top\X(0)\X(0)^\top\U)$. Together with Lemma \ref{init_with_high_probability}, it follows that $\phi_{k,k}(t)\geq\frac{(r-r_A)^2}{c_1mr}\stackrel{(e)}{\geq}\frac{\xi}{c_4}$ w.h.p.. Here, $(e)$ is by $r\geq c_rr_A$ and $c_4=\frac{c_1}{(1-\frac{1}{c_r})^2}>1$.

    Then, $\dot{\phi}_{k,k}(t)\geq2(\sigma_k-\sigma_{k+1}-3c_4\xi^2)\phi_{k,k}(t)-2(\sigma_k-\sigma_{k+1})\phi_{k,k}^2(t)$.

    Let $y_{k}(t)\in\mathbb{R}$ and $\dot{y}_k(t)=2(\sigma_k-\sigma_{k+1}-3c_4\xi^2)y_k(t)-2(\sigma_k-\sigma_{k+1})y^2_k(t)$, $y_k(0)=\phi_{k,k}(0)$.

    As stated in \cite{richards1959flexible}, the trajectory $y_k(t)$ admits the closed-form 
    \begin{align*}
        y_k(t)=\frac{1-\frac{3c_4\xi^2}{\sigma_k-\sigma_{k+1}}}{1+(-1+\frac{1-\frac{3c_4\xi^2}{\sigma_k-\sigma_{k+1}}}{y_k(0)})e^{-2(\sigma_k-\sigma_{k+1}-3c_4\xi^2)t}},\quad t\geq0.
    \end{align*}

    Let $\tilde{T}_k\in\{t\geq0\,|\,y_k(t)=1-\frac{6c_4\xi^2}{\sigma_k-\sigma_{k+1}}\}$ (Note that this is a singleton because $y_k(t)$ is strictly increasing with respect to $t$).
    
    We can obtain that
    \begin{align*}
        \tilde{T}_k=\frac{1}{2(\sigma_k-\sigma_{k+1}-3c_4\xi^2)}\log(\frac{1-\frac{3c_4\xi^2}{\sigma_k-\sigma_{k+1}}-y_k(0)}{y_k(0)}\cdot\frac{\sigma_{k}-\sigma_{k+1}-6c_4\xi^2}{3c_4\xi^2}).
    \end{align*}

    By the comparison principle \cite{hartman2002ordinary}, it is guaranteed that $\phi_{k,k}(t)\geq y_k(t)\geq y_k(\tilde{T}_k)=1-\frac{6c_4\xi^2}{\sigma_k-\sigma_{k+1}}$ for all $t\geq\tilde{T}_k$.

    When $t\geq\tilde{T}_k$, we have that $1-\phi_{k,k}(t)\leq\frac{6c_4\xi^2}{\sigma_k-\sigma_{k+1}}$. Together with $\sum_{j=1}^{k-1}\sigma_{j}\phi_{k,j}^2(t)\leq\sum_{j=1}^{k-1}\sigma_{j}(1-\phi_{k,k}(t))(1-\phi_{j,j}(t))$, it follows that
    \begin{align*}
        \dot{\phi}_{k,k}(t)&\geq2(\sigma_{k}-\sigma_{k+1})\phi_{k,k}(t)-2(\sigma_{k}-\sigma_{k+1})\phi_{k,k}^2(t)-2\sum_{j=1}^{k-1}\sigma_j(1-\phi_{k,k}(t))\xi^3\\
        &\geq2(\sigma_{k}-\sigma_{k+1})\phi_{k,k}(t)-2(\sigma_{k}-\sigma_{k+1})\phi_{k,k}^2(t)-3\xi^3(1-\phi_{k,k}(t))\\
        &=2(\sigma_{k}-\sigma_{k+1}-\frac{3\xi^3(1-\phi_{k,k}(t))}{2\phi_{k,k}(t)})\phi_{k,k}(t)-2(\sigma_{k}-\sigma_{k+1})\phi_{k,k}^2(t) \\
        &=2(\sigma_{k}-\sigma_{k+1}-\frac{\xi^3}{2}(\sigma_k-\sigma_{k+1})\cdot\frac{3(1-\phi_{k,k}(t))}{(\sigma_k-\sigma_{k+1})\phi_{k,k}(t)})\phi_{k,k}(t)-2(\sigma_{k}-\sigma_{k+1})\phi_{k,k}^2(t) \\
        &\stackrel{(f)}{\geq}2(\sigma_{k}-\sigma_{k+1}-\frac{\xi^3}{2}(\sigma_{k}-\sigma_{k+1}))\phi_{k,k}(t)-2(\sigma_{k}-\sigma_{k+1})\phi_{k,k}^2(t),
    \end{align*}
    where $(f)$ is from $\frac{3(1-\phi_{k,k}(t))}{(\sigma_k-\sigma_{k+1})\phi_{k,k}(t)}\leq\frac{36c_4\xi^2}{(\sigma_k-\sigma_{k+1})^2}\leq1$.

    Let $y(t)\in\mathbb{R}$ and $\dot{y}(t)=2(\sigma_{k}-\sigma_{k+1}-\frac{\xi^3}{2}(\sigma_{k}-\sigma_{k+1}))y(t)-2(\sigma_{k}-\sigma_{k+1})y^2(t)$, $y_k(\tilde{T}_k)=1-\frac{6c_4\xi^2}{\sigma_k-\sigma_{k+1}}$.

    As stated in \cite{richards1959flexible}, the trajectory $y_k(t)$ admits the closed-form 
    \begin{align*}
        y_k(t+\tilde{T}_k)=\frac{1-\frac{\xi^3}{2}}{1+(-1+\frac{1-\frac{\xi^3}{2}}{y_k(\tilde{T}_k)})e^{-2(\sigma_{k}-\sigma_{k+1}-\frac{\xi^3}{2}(\sigma_{k}-\sigma_{k+1}))t}},\quad t\geq0.
    \end{align*}

    Let $\hat{T}_k\in\{t\geq0\,|\,y_k(t+\tilde{T}_k)=1-\xi^3\}$ (Note that this is a singleton because $y_k(t)$ is strictly increasing with respect to $t$).

    We can obtain that
    \begin{align*}
        \hat{T}_k=\frac{1}{2(\sigma_k-\sigma_{k+1}-\frac{\xi^3}{2}(\sigma_k-\sigma_{k+1}))}\log(\frac{2(1-\xi^3)(1-\frac{\xi^3}{2}-y_k(\tilde{T}_k))}{\xi^3y_k(\tilde{T}_k)}).
    \end{align*}

    Thus, we can get the following upper bound of $\tilde{T}_k+\hat{T}_k$:
    \begin{align*}
        \tilde{T}_k+\hat{T}_k&\leq\frac{1}{2(\sigma_k-\sigma_{k+1}-3c_4\xi^2)}\log\bigg[\frac{1-\frac{3c_4\xi^2}{\sigma_k-\sigma_{k+1}}-y_k(0)}{y_k(0)}\cdot\frac{\sigma_{k}-\sigma_{k+1}-6c_4\xi^2}{3c_4\xi^2}\\
        &\hspace{12.5em}\cdot\frac{2(1-\xi^3)(1-\frac{\xi^3}{2}-y_k(\tilde{T}_k))}{\xi^3y_k(\tilde{T}_k)}\bigg]\\
        &\stackrel{(g)}{\leq}\frac{1}{2(\sigma_k-\sigma_{k+1}-3c_4\xi^2)}\log(\frac{5}{y_k(0)\xi^3})\\
        &\stackrel{(h)}{\leq}\frac{1}{2(\sigma_k-\sigma_{k+1}-3c_4\xi^2)}
        \log(\frac{5c_4}{\xi^4})\\
        &\leq\frac{1}{2(\sigma_k-\sigma_{k+1}-3c_4\xi^2)}
        \log(\frac{1}{\xi^5})\\
        &\stackrel{(i)}{\leq}\frac{5}{2(1-\frac{2}{\beta})\sigma_{k}}\log(\frac{1}{\xi})\\
        &\leq\frac{4}{\sigma_k}\log(\frac{1}{\xi}),
    \end{align*}
    where $(g)$ is by $\xi=\frac{r}{m}\leq\frac{1}{c_\delta^2\kappa}$; $(h)$ is from $y_k(0)=\phi_{k,k}(0)\geq\frac{\xi}{c_4}$ w.h.p.; and $(i)$ follows from our assumptions that $\sigma_{k}\geq\beta\sigma_{k+1}$ and $\xi\leq\frac{1}{c_\delta^2\kappa}$.

    Then, $T_k\leq T_{k-1}+\tilde{T}_k+\hat{T}_k\leq T_{k-1}+\frac{4}{\sigma_{k}}\log(\frac{1}{\xi})$, and by the comparison principle \cite{hartman2002ordinary}, it is guaranteed that $\phi_{k,k}(t)\geq1-\xi^3$ for all $t\geq T_k$.

    Therefore, $T_k\leq\frac{4}{\sigma_1}\log(\frac{m}{r})
        +
        \sum_{j=2}^k \frac{4}{\sigma_{j}} \log(\frac{m}{r}),
        \quad k=1,2,\ldots,r_A-1.$

    We then prove that $\|\X(T_k)\bfTheta(T_k)\X(T_k)^\top - \A_k\|_\fro^2 \;\le\; 5\sigma_1^2\delta,\,k=1,2,\ldots,r_A-1$,        
    where $\A_k:=\argmin\limits_{\rank(\hat{\A})\leq k}\, \|\hat{\A}-\A\|_\fro^2$ is the best rank-$k$ approximation of $\A$ in Frobenius norm.

    From Lemma \ref{apdx.lemma.trace_control_norm} and $\beta\geq10$, it is sufficient to show that $\phi_{i,i}(T_k)\geq1-\delta,\,i=1,2,\ldots,k$ and $\phi_{i,i}(T_k)\leq\sqrt{\delta},\,i=k+1,\ldots,r_A$ for $k=1,2,\ldots,r_A-1$.

    As proved above, it is guaranteed that $\phi_{i,i}(T_k)\geq1-\delta^3\geq1-\delta,\,i=1,2,\ldots,k$.

    For $\phi_{i,i}(t), i\geq2$, \eqref{form.dynamic.phi} shows that $\dot{\phi}_{i,i}(t)\leq2\sigma_i\phi_{i,i}(t)-2\sigma_i\phi_{i,i}^2(t)$.

    Let $z_i(t)\in\mathbb{R}$ and $\dot{z}_i(t)=2\sigma_iz_i(t)-2\sigma_iz_i^2(t)$, $z_i(0)=\phi_{i,i}(0)$. 
    
    By the comparison principle \cite{hartman2002ordinary}, we have that $\phi_{i,i}(t)\leq z_i(t)$ for all $t\geq0$, which means it is sufficient to prove that $z_i(T_k)\leq \sqrt{\delta}$.

    As stated in \cite{richards1959flexible}, the trajectory $z_{i}(t)$ admits the closed-form solution
    \begin{align*}
        z_i(t)=\frac{1}{1+(-1+\frac{1}{z_i(0)})e^{-2\sigma_it}}\leq z_i(0)e^{-2\sigma_i t}.
    \end{align*}

    At $T_k$, for all $i\geq k+1$, we have that 
    \begin{align*}
        z_i(T_k)&\leq z_i(0)e^{-2\sigma_i T_k}\\
        &\stackrel{(j)}{\leq}\frac{3}{2}\xi \cdot\xi^{-2\sigma_i(\frac{5}{2(\beta-1)\sigma_2}+\sum_{j=2}^k\frac{5}{2(\beta-2)\sigma_{j+1}})}\\
        &\stackrel{(k)}{\leq}\frac{3}{2}\xi^{1-\frac{5}{\beta-2}\sum_{j=2}^{k+1}\beta^{-(i-j)}}\\
        &\leq\frac{3}{2}\xi^{1-\frac{5}{\beta-2}\sum_{j=-\infty}^{k+1}\beta^{-(i-j)}}\\
        &\leq\frac{3}{2}\xi^{1-\frac{5\beta}{(\beta-1)(\beta-2)}}\\
        &\stackrel{(l)}{\leq}\sqrt{\delta},
    \end{align*}
    where $(j)$ is by Lemma \ref{apdx.lemma.init_r_m}, which shows that $z_i(0)=\phi_{i,i}(0)\leq\frac{3}{2}\xi$ w.h.p.; $(k)$ is from our assumption that $\frac{\sigma_i}{\sigma_{i+1}}\geq\beta=10,\,i=1,2,\ldots,r_A-1$; and $(l)$ comes from $\xi\leq\delta^2$ and $\delta\leq\frac{1}{c_\delta\sqrt{\kappa}}$.

    Lastly, we prove that $\|\X(T_0)\bfTheta(T_0)\X(T_0)^\top-\A_0\|_\fro^2=\|\X(0)\bfTheta(0)\X(0)^\top\|_\fro^2\leq5\delta$.
    
    From Lemma \ref{apdx.lemma.init_r_m}, it follows that $\max\limits_{1\leq i \leq r_A}\phi_{i,i}(0)\leq\frac{3}{2}\xi$ holds w.h.p.. Together with $\frac{3}{2}\xi\leq\frac{3}{2}\delta^2\leq\sqrt{\delta}$ and Lemma \ref{apdx.lemma.trace_control_norm}, we have that $\|\X(T_0)\bfTheta(T_0)\X(T_0)^\top-\A_0\|_\fro^2\leq5\delta$.

    This completes the proof.
\end{proof}

\subsection{Proof of Lemma \ref{lemma_saddle_lower_bound}} \label{supplement.proofs.lemma_saddle_lower_bound}
\leavevmode\par
\begin{proof}
    Let $\U\bfSigma\U^\top$ be the compact SVD of $\A$, where $\U=[\uu_1,\uu_2,\ldots,\uu_{r_A}]\in\mathbb{R}^{m\times r_A},\bfSigma\in\mathbb{S}^{r_A}$. Let $\U_\perp\in\mathbb{R}^{m\times(m-r_A)}$ be the orthogonal complement of $\U$.
    
    From Lemma \ref{init_with_high_probability} and Lemma \ref{apdx.lemma.diag_lower_bound}, $\uu_k^\top\X(t)\X(t)^\top \uu_k\geq\frac{(r-r_A)^2}{c_1mr},k=1,2,\ldots,r_A$ holds for all $t\geq0$ w.h.p. over the initialization. By the update, we have $\bfTheta(t)=\X(t)^\top\A\X(t)$. 
    
    It therefore follows that
    \begin{align*}
        &\|\X(t)\bfTheta(t)\X(t)^\top-\A_\rho\|_\fro^2\\
        &~~~~~~~~~~=\|\X(t)\X(t)^\top\A\X(t)\X(t)^\top-\A_\rho\|_\fro^2\\
        &~~~~~~~~~~\stackrel{(a)}{=}\|\sum_{k=1}^{r_A}\sigma_k\X(t)\X(t)^\top\uu_k\uu_k^\top\X(t)\X(t)^\top-\sum_{k=1}^{\rho}\sigma_k\uu_k\uu_k^\top\|_\fro^2\\
        &~~~~~~~~~~\stackrel{(b)}{=}\|[\U,\U_\perp]^\top(\sum_{k=1}^{r_A}\sigma_k\X(t)\X(t)^\top\uu_k\uu_k^\top\X(t)\X(t)^\top-\sum_{k=1}^{\rho}\sigma_k\uu_k\uu_k^\top)[\U,\U_\perp]\|_\fro^2,
    \end{align*}
    where $(a)$ is from the Eckart-Young-Mirsky theorem, which gives $\A_\rho=\sum_{j=1}^\rho\sigma_j\uu_j\uu_j^\top$; $(b)$ is by the orthogonal invariance of Frobenius norm.

    From the definition of Frobenius norm, i.e., $\|\Y\|_\fro^2=\sum_{i,j=1}^m\Y_{i,j}^2\geq\sum_{j=i+1}^{r_A}\Y_{j,j}^2$ for any $\Y\in\mathbb{R}^{m\times m}$, we have that
    \begin{align*}
    &\|\X(t)\bfTheta(t)\X(t)^\top-\A_\rho\|_\fro^2\\
    &~~~~~~~~~~\geq\sum_{j=\rho+1}^{r_A}\Big[\uu_j^\top(\sum_{k=1}^{r_A}\sigma_k\X(t)\X(t)^\top\uu_k\uu_k^\top\X(t)\X(t)^\top-\sum_{k=1}^{\rho}\sigma_k\uu_k\uu_k^\top)\uu_j\Big]^2\\
        &~~~~~~~~~~=\sum_{j=\rho+1}^{r_A}\Big[\sum_{k=1}^{r_A}\sigma_k(\uu_j^\top\X(t)\X(t)^\top\uu_k)^2\Big]^2\\
        &~~~~~~~~~~\geq\sum_{j=\rho+1}^{r_A}\big[\sigma_j(\uu_j^\top\X(t)\X(t)^\top\uu_j)^2\big]^2\\
        &~~~~~~~~~~\geq\sum_{j=\rho+1}^{r_A}\sigma_j^2\cdot\frac{(r-r_A)^8}{c_1^4m^4r^4}.
    \end{align*}
    This completes the proof.
\end{proof}

\subsection{Proof of Theorem \ref{theorem_DMD_rank1}} \label{supplement.proofs.theorem_DMD_rank1}
\leavevmode\par
\begin{proof}
    For the proof, we take $\eta=\frac{1}{2\lambda_1}>0$.
    
    Let $\U=[\uu_1,\ldots,\uu_{r_A}]$ where $\uu_1,\ldots,\uu_{r_A}\in\mathbb{R}^m$. Let $\U_\perp=[\uu_{r_A+1},\ldots,\uu_m]\in\mathbb{R}^{m\times (m-r_A)}$ be the orthogonal complement of $\U$, where $\uu_{r_A+1},\ldots,\uu_m\in\mathbb{R}^m$.

    Since $\x_0\in\st(m,1)$ and $\x_{t+1}=\y_{t+1}/\|\y_{t+1}\|$, which guarantees that $\x_t\in\st(m,1)$ for all $t\geq0$, we can write $\x_t$ as $\x_t=\sum_{i=1}^m\alpha_i^{(t)}\uu_i$, with $\sum_{i=1}^m(\alpha_i^{(t)})^2=1$.

    From the update of $y_{t+1}$, we have that
    \begin{align*}
        y_{t+1}&=[1+\eta(\A-\x_t^\top\A\x_t)]\x_t\\
        &=\sum_{i=1}^m[1+\eta(\A-\x_t^\top\A\x_t)]\alpha_i^{(t)}\uu_i\\
        &=\sum_{i=1}^m[1+\eta(\lambda_i-\x_t^\top\A\x_t)]\alpha_i^{(t)}\uu_i,
    \end{align*}
    where $\lambda_{r_A+1}=\ldots\lambda_{m}=0$.

    Let $\beta_{i}^{(t)}=\alpha_i^{(t)}[1+\eta(\lambda_i-\x_t^\top\A\x_t)]$. By the update of $\x_{t+1}$, we obtain
    \begin{align}\label{x_expressed_by_beta}
        \x_{t+1}=\frac{\sum_{i=1}^m\beta_i^{(t)}\uu_i}{\sqrt{\sum_{i=1}^m(\beta_i^{(t)})^2}}.
    \end{align}

    Let $r_t:=\frac{\sum_{i=2}^m(\alpha_i^{(t)})^2}{(\alpha_1^{(t)})^2}\geq0$. Then, the relationship between $r_{t+1}$ and $r_t$ is as follows:
    \begin{align*}
        r_{t+1}&=\frac{\sum_{i=2}^m(\alpha_i^{(t+1)})^2}{(\alpha_1^{(t+1)})^2}\\
        &=\frac{\sum_{i=2}^m(\uu_i^\top\x_{t+1})^2}{(\uu_1^\top\x_{t+1})^2}\\
        &\stackrel{(a)}{=}\frac{\sum_{i=2}^m(\beta_i^{(t)})^2}{(\beta_1^{(t)})^2}\\
        &=\frac{\sum_{i=2}^m[\alpha_i^{(t)}(1+\eta(\lambda_i-\x_t^\top\A\x_t))]^2}{[\alpha_1^{(t)}(1+\eta(\lambda_1-\x_t^\top\A\x_t))]^2}\\
        &=r_t\cdot\frac{\sum_{i=2}^m[\alpha_i^{(t)}(1+\eta(\lambda_i-\x_t^\top\A\x_t))]^2}{(1+\eta(\lambda_1-\x_t^\top\A\x_t))^2\sum_{i=2}^m(\alpha_i^{(t)})^2},
    \end{align*}
    where $(a)$ is by substituting $x_{t+1}$ with the expression in \eqref{x_expressed_by_beta}.

    From the assumption, the eigenvalues of $\A$ are $\lambda_1>\lambda_2\geq\ldots\geq\lambda_{r_A}$. Together with $\x_t\in\st(m,1)$, it follows that $\x_t^\top\A\x_t\in[\lambda_{r_A},\lambda_1]$ for all $t\geq0$ and $\lambda_i-\x_t^\top\A\x_t\in[\lambda_i-\lambda_1,\lambda_i-\lambda_{r_A}]$.

    Now, we can further simplify the the relationship between $r_{t+1}$ and $r_t$ as follows:
    \begin{align*}
        r_{t+1}&=r_t\cdot\frac{\sum_{i=2}^m[\alpha_i^{(t)}(1+\eta(\lambda_i-\x_t^\top\A\x_t))]^2}{(1+\eta(\lambda_1-\x_t^\top\A\x_t))^2\sum_{i=2}^m(\alpha_i^{(t)})^2}\\
        &\stackrel{(b)}{\leq}r_t\cdot\frac{(1+\eta(\lambda_2-\x_t^\top\A\x_t))^2\sum_{i=2}^m(\alpha_i^{(t)})^2}{(1+\eta(\lambda_1-\x_t^\top\A\x_t))^2\sum_{i=2}^m(\alpha_i^{(t)})^2}\\
        &=r_t\Big(\frac{1+\eta(\lambda_2-\x_t^\top\A\x_t)}{1+\eta(\lambda_1-\x_t^\top\A\x_t)}\Big)^2\\
        &=r_t\Big(1-\frac{\eta(\lambda_1-\lambda_2)}{1+\eta(\lambda_1-\x_t^\top\A\x_t)}\Big)^2\\
        &\leq r_t\Big(1-\frac{\eta(\lambda_1-\lambda_2)}{1+2\eta\lambda_1}\Big)^2\\
        &\stackrel{(c)}{=}r_t\Big(1-\frac{\eta(\lambda_1-\lambda_2)}{2}\Big)^2,
    \end{align*}
    where $(b)$ follows from the fact that $\lambda_i-\x_t^\top\A\x_t\in[\lambda_i-\lambda_1,\lambda_i-\lambda_{r_A}]\subseteq[-2\lambda_1,2\lambda_1]$ and $\eta=\frac{1}{2\lambda_1}$, which guarantees that $1+\eta(\lambda_i-\x_t^\top\A\x_t)\geq0$; and $(c)$ is by $\eta=\frac{1}{2\lambda_1}$.
    
    From Lemma \ref{apdx.lemma.init_vec}, we have that $(\alpha_1^{(0)})^2=u_1^\top\x_0\x_0^\top u_1\geq\frac{1}{cm}$ w.h.p., where $c$ is a universal constant. Then, $r_0=\frac{\sum_{i=2}^m(\alpha_i^{(0)})^2}{(\alpha_1^{(0)})^2}=\frac{1-(\alpha_1^{(0)})^2}{(\alpha_1^{(0)})^2}\leq cm$ w.h.p..

    Thus, for any $t\geq0$, $r_{t}\leq r_0\big(1-\frac{\eta(\lambda_1-\lambda_2)}{2}\big)^{2t}\leq cm\big(1-\frac{\eta(\lambda_1-\lambda_2)}{2}\big)^{2t}$ w.h.p. over the initialization. 

    Since we have $\theta_t=\x_t^\top\A\x_t$ throughout the iterations, it is guaranteed that
    \begin{align*}
        \lambda_1-\theta_t&=\lambda_1-\x_t^\top\A\x_t\\
        &=\sum_{i=1}^m(\lambda_1-\lambda_i)(\alpha_i^{(t)})^2\\
        &=\sum_{i=2}^m(\lambda_1-\lambda_i)(\alpha_i^{(t)})^2.
    \end{align*}

    Then, $0\leq\lambda_1-\theta_t=\sum_{i=2}^m(\lambda_1-\lambda_i)(\alpha_i^{(t)})^2\leq2\lambda_1\sum_{i=2}^m(\alpha_i^{(t)})^2\leq2\lambda_1 r_t$.

    We now upper bound $\|\theta_t\x_t\x_t^\top-\A_1\|=\|\theta_t\x_t\x_t^\top-\lambda_1\uu_1\uu_1^\top\|$ by $r_t$:
    \begin{align*}
        \|\theta_t\x_t\x_t^\top-\lambda_1\uu_1\uu_1^\top\|&\leq\|(\theta_t-\lambda_1)\x_t\x_t^\top\|+\|\lambda_1(\x_t\x_t^\top-\uu_1\uu_1^\top)\|\\
        &=|\lambda_1-\theta_t|+\lambda_1\|\x_t\x_t^\top-\uu_1\uu_1^\top\|\\
        &\stackrel{(d)}{\leq}2\lambda_1 r_t+\lambda_1\sqrt{1-|\uu_1^\top\x_t|^2}\\
        &=2\lambda_1r_t+\lambda_1\sqrt{\frac{r_t}{1+r_t}}\\
        &\leq2\lambda_1r_t+\lambda_1\sqrt{r_t}\\
        &\stackrel{(e)}{\leq}3\lambda_1\sqrt{r_t},
    \end{align*}
    where $(d)$ is from Lemma \ref{apdx.lemma.sin_x_y}; and $(e)$ holds when $r_t\leq1$.

    Since $\rank(\theta_t\x_t\x_t^\top)=\rank(\lambda_1\uu_1\uu_1^\top)=1$, it is guaranteed that $\rank(\theta_t\x_t\x_t^\top-\lambda_1\uu_1\uu_1^\top)\leq2$. Then, $\|\theta_t\x_t\x_t^\top-\lambda_1\uu_1\uu_1^\top\|_\fro\leq\sqrt{2}\|\theta_t\x_t\x_t^\top-\lambda_1\uu_1\uu_1^\top\|\leq3\sqrt{2}\lambda_1\sqrt{r_t}$.

    Thus, to obtain $\|\x_{t_\varepsilon}\theta_{t_\varepsilon}\x_{t_\varepsilon}^\top-\A_1\|_\fro\leq\mathcal{O}(\lambda_1\varepsilon)$, it is sufficient to have $r_{t_\varepsilon}\leq\varepsilon^2$.
    
    From $r_t\leq cm\big(1-\frac{\eta(\lambda_1-\lambda_2)}{2}\big)^{2t}$, we just require $cm\big(1-\frac{\eta(\lambda_1-\lambda_2)}{2}\big)^{2t_\varepsilon}\leq\varepsilon^2$, i.e., $$t_\varepsilon\geq\frac{2\log(\frac{\varepsilon}{\sqrt{cm}})}{2\log(1-\frac{\eta(\lambda_1-\lambda_2)}{2})}=\mathcal{O}(\frac{\log(m)+\log(\frac{1}{\varepsilon})}{\eta(\lambda_1-\lambda_2)}).$$
\end{proof}

\subsection{Proof of Theorem \ref{theorem_recursive_rank1}} \label{supplement.proofs.theorem_recursive_rank1}
\leavevmode\par
\begin{proof}
    For simplicity, we take $\eta=\frac{1}{4}$.
    
    Let $\D_j:=\sum_{i=j+1}^{r_A}\sigma_i\uu_i\uu_i^\top,j=0,1,\ldots,r_A$. Then, $\D_0=\A$ and $\D_{r_A}=\bm{0}$.

    Let $\E_j:=\A^{(j)}-\D_j,j=0,\ldots,r_A$. Then, $\E_0=\bm{0}$ and $\E_{r_A}=\A^{(r_A)}-\D_{r_A}=\A^{(r_A)}$.
    
    Let $(\lambda_1^{(j-1)},\vv_1^{(j-1)})$ be the leading eigen couple of $\A^{(j-1)}$, i.e., $\lambda_1^{(j-1)}=\lambda_{max}(\A^{(j-1)})$, $\A^{(j-1)}\vv_1^{(j-1)}=\lambda_1^{(j-1)}\vv_1^{(j-1)}$ and $\|\vv_1^{(j-1)}\|=1$.

    From the definition of $\A^{(j)},\D_j$ and $\E_j$, for any $1\leq j\leq r_A$, we have that
    \begin{align*}
        \A^{(j-1)}=\D_{j-1}+\E_{j-1}=\sigma_j\uu_j\uu_j^\top+\D_j+\E_{j-1}.
    \end{align*}
    Since the largest eigenvalues of $\A^{(j-1)}$ and $\D_{j-1}$ are $\lambda_1^{(j-1)}$ and $\sigma_j$ respectively, we can derive the following inequality through Weyl's inequality \cite{franklin2000matrix}:
    \begin{align}\label{weyl_dif_lambda_sigma}
        |\lambda_1^{(j-1)}-\sigma_j|\leq\|\E_{j-1}\|.
    \end{align}
    From Davis-Kahan theorem \cite{yu2015useful}, we have that
    \begin{align}\label{davis_dif_lambda_sigma}
        \|\vv_1^{(j-1)}(\vv_1^{(j-1)})^\top-\uu_j\uu_j^\top\|\leq\frac{2\|\E_{j-1}\|}{\sigma_j-\sigma_{j+1}}.
    \end{align}
    Combining \eqref{weyl_dif_lambda_sigma} and \eqref{davis_dif_lambda_sigma}, we can derive the following inequality:
    \begin{align}\label{dif_largest_eigen}
        \|\lambda_1^{(j-1)}\vv_1^{(j-1)}(\vv_1^{(j-1)})^\top-\sigma_j\uu_j\uu_j^\top\|&\leq\|(\lambda_1^{(j-1)}-\sigma_j)\vv_1^{(j-1)}(\vv_1^{(j-1)})^\top\|\\
        &~~~~~~~+\|\sigma_j(\vv_1^{(j-1)}(\vv_1^{(j-1)})^\top-\uu_j\uu_j^\top)\|\nonumber\\
        &=|\lambda_1^{(j-1)}-\sigma_j|+\sigma_j\|\vv_1^{(j-1)}(\vv_1^{(j-1)})^\top-\uu_j\uu_j^\top\|\nonumber\\
        &\leq(1+\frac{2\sigma_j}{\sigma_j-\sigma_{j+1}})\|\E_{j-1}\|.
    \end{align}
    From $\A^{(j)}=\A^{(j-1)}-\B^{(j-1)}$, we obtain
    \begin{align*}
        \E_{j}&=\A^{(j)}-\D_j\\
        &=\A^{(j-1)}-\B_j-(\D_{j-1}-\sigma_j\uu_j\uu_j^\top)\\
        &=\A^{(j-1)}-\D_{j-1}-(\B_j-\sigma_j\uu_j\uu_j^\top).
    \end{align*}
    Thus, we have that 
    \begin{align*}
        \|\E_j\|&\leq\|\A^{(j-1)}-\D_{j-1}\|+\|\B^{(j-1)}-\sigma_j\uu_j\uu_j^\top\|\\
        &\leq\|\E_{j-1}\|+\|\B^{(j-1)}-\lambda_1^{(j-1)}\vv_1^{(j-1)}(\vv_1^{(j-1)})^\top\|+\|\lambda_1^{(j-1)}\vv_1^{(j-1)}(\vv_1^{(j-1)})^\top-\sigma_j\uu_j\uu_j^\top\|\\
        &\stackrel{(a)}{\leq}(2+\frac{2\sigma_j}{\sigma_j-\sigma_{j+1}})\|\E_{j-1}\|+\|\B^{(j-1)}-\lambda_1^{(j-1)}\vv_1^{(j-1)}(\vv_1^{(j-1)})^\top\|\\
        &\stackrel{(b)}{\leq}6\|\E_{j-1}\|+\|\B^{(j-1)}-\lambda_1^{(j-1)}\vv_1^{(j-1)}(\vv_1^{(j-1)})^\top\|\\
        &\stackrel{(c)}{\leq}6\|\E_{j-1}\|+\varepsilon_{j-1},
    \end{align*}
    where $(a)$ is from inequality \eqref{dif_largest_eigen}; $(b)$ is by the assumption that $\frac{\sigma_{i}}{\sigma_{i+1}}\geq2,i=1,2\ldots,r_A-1$; and we let $\varepsilon_{j-1}:=\|\B^{(j-1)}-\lambda_1^{(j-1)}\vv_1^{(j-1)}(\vv_1^{(j-1)})^\top\|$ in $(c)$.

    Recall that we let $(\lambda_1^{(j-1)},\vv_1^{(j-1)})$ be the leading eigen couple of $\A^{(j-1)}$. We also let $(\lambda_2^{(j-1)},\vv_2^{(j-1)})$ be the second largest eigen couple of $\A^{(j-1)}$ and $(\lambda_{min}^{(j-1)},\vv_{min}^{(j-1)})$ be the smallest eigen couple of $\A^{(j-1)}$, i.e., $\lambda_2^{(j-1)}=\lambda_2(\A^{(j-1)})$ and $\lambda_{min}^{(j-1)}=\lambda_{min}(\A^{(j-1)})$.
    
    In the first round, we have that $\A^{(0)}=\A$. Thus, it follows from the assumption that $\lambda_1^{(0)}>\lambda_2^{(0)}$, $\lambda_1^{(0)}>|\lambda_{min}^{(0)}|$ and $\eta_0=\frac{1}{4}\leq\frac{1}{\lambda_1^{(0)}}=1$, which means that Theorem \ref{theorem_DMD_rank1} is applicable.

    From Theorem \ref{theorem_DMD_rank1}, we can obtain that $\|\B^{(0)}-\lambda_1^{(0)}\vv_1^{(0)}(\vv_1^{(0)})^\top\|\leq\varepsilon_{step}$ after at most $\mathcal{O}(\frac{\log(m)+\log(\frac{1}{\varepsilon_{step}})}{\eta_0(\lambda_1^{(0)}-\lambda_2^{(0)})})$ iterations. We now set $\varepsilon_{step}=\frac{\sqrt{\varepsilon}}{\sqrt{r_A}6^{r_A}}$ to be the objective error in each round.
    
    Since $1=\lambda_1^{(0)}\geq2\lambda_2^{(0)}\geq\frac{2}{\kappa}$, which guarantees that $\frac{1}{\lambda_1^{(0)}-\lambda_2^{(0)}}\leq\kappa$, and we apply $\eta_0=\frac{1}{4}$, it follows that after running  $T_\star=\mathcal{O}\big(\kappa(\log(m)+r_A)+\kappa\log(\frac{1}{\varepsilon})\big)$ iterations in the first round, $\varepsilon_0=\|\B^{(0)}-\lambda_1^{(0)}\vv_1^{(0)}(\vv_1^{(0)})^\top\|\leq\frac{\sqrt{\varepsilon}}{\sqrt{r_A}6^{r_A}}$ is achieved. This implies that $\|\E_1\|\leq6\|\E_0\|+\varepsilon_0\leq\frac{\sqrt{\varepsilon}}{\sqrt{r_A}6^{r_A}}$.

    Now, assume that $\|\E_k\|\leq\varepsilon_{step}\cdot\frac{6^{k}-1}{5}\leq\frac{\sqrt{\varepsilon}}{5}$ after $k$ rounds, $1\leq k\leq r_A-1$.

    Since $\|\A^{(k)}-\D_k\|=\|\E_k\|$, we can obtain from Weyl's inequality \cite{franklin2000matrix} that
    \begin{align*}
    \frac{1}{\kappa}-\|\E_k\|\leq\sigma_{k+1}-\|\E_k\|\leq&\lambda_1^{(k)}\leq\sigma_{k+1}+\|\E_k\|\leq2,\\
        &\lambda_2^{(k)}\leq\sigma_{k+2}+\|\E_k\|,\\
        -\|\E_k\|\leq&\lambda_{min}^{(k)}.
    \end{align*}
    Thus, it follows that
    \begin{align*}
        \lambda_1^{(k)}-\lambda_2^{(k)}&\geq\sigma_{k+1}-\sigma_{k+2}-2\|\E_k\|\geq\sigma_{k+2}-\frac{(6^k-1)\cdot2\sqrt{\varepsilon}}{5\cdot6^{r_A}}\geq\frac{1}{\kappa}-\frac{\sqrt{\varepsilon}}{2}\stackrel{(d)}{\geq}\frac{1}{2\kappa},\\
        |\lambda_{min}^{(k)}|&\leq\|\E_k\|\leq\frac{\sqrt{\varepsilon}}{5}\stackrel{(d)}{\leq}\frac{1}{\kappa}-\frac{\sqrt{\varepsilon}}{5}\leq\lambda_1^{(k)},\\
        \eta&=\frac{1}{4}\leq\frac{1}{2\lambda_1^{(k)}},
    \end{align*}
    where $(d)$ follows from $\sqrt{\varepsilon}\leq\frac{1}{\kappa}$.
    
    This implies that the requirements of Theorem \ref{theorem_DMD_rank1} is satisfied and we can apply it in the $(k+1)$-th round.

    From our assumption, we run $T_\star=\mathcal{O}\big(\kappa(\log(m)+r_A)+\kappa\log(\frac{1}{\varepsilon})\big)$ iterations in the $(k+1)$-th round. Then, $\varepsilon_{k}=\|\B^{(k)}-\lambda_1^{(k)}\vv_1^{(k)}(\vv_1^{(k)})^\top\|\leq\varepsilon_{step}$, which implies that $$\|\E_{k+1}\|\leq6\|\E_k\|+\varepsilon_k\leq\varepsilon_{step}\cdot\frac{6^k-1}{5}\cdot6+\varepsilon_{step}=\varepsilon_{step}\cdot\frac{6^{k+1}-1}{5}.$$
    Thus, $\|\E_{k+1}\|\leq\varepsilon_{step}\cdot\frac{6^{k+1}-1}{5}$ holds after $k+1$ rounds.

    Therefore, after at most $r_A$ rounds, we have that $$\|\A-\tilde{\A}^{(r_A)}\|=\|\A-\sum_{j=0}^{r_A-1}\B^{(j)}\|=\|\A^{(r_A)}\|=\|\E_{r_A}\|\leq\frac{\sqrt{\varepsilon}}{\sqrt{r_A}}.$$
    Since $\rank(\tilde{\A}^{r_A})\leq r_A$ and $\rank(\A)=r_A$, it is guaranteed that $\rank(\A-\tilde{\A}^{(r_A)})\leq2r_A$. Thus, $\|\A-\tilde{\A}^{(r_A)}\|_\fro\leq\sqrt{2r_A}\cdot\frac{\sqrt{\varepsilon}}{\sqrt{r_A}}=\sqrt{2\varepsilon}$, which implies that $\frac{1}{4}\|\A-\tilde{\A}^{(r_A)}\|_\fro^2\leq\varepsilon$. And the total number of iterations is at most $\sum_{j=0}^{r_A-1}T_\star=\mathcal{O}\big(\kappa(r_A\log(m)+r_A^2)+r_A\kappa\log(\frac{1}{\varepsilon})\big)$.

    We now prove that $\theta_{T_\star}^{(j)}>0$ for $j=0,\ldots,r_A-1$, i.e., for all the $r_A$ rounds.

    Recall that we have shown $\lambda_1^{(j)}\geq\frac{1}{\kappa}-\frac{\sqrt{\varepsilon}}{5}\geq\frac{4}{5\kappa}>0$ and $\|\B^{(j)}-\lambda_1^{(j)}\vv_1^{(j)}(\vv_1^{(j)})^\top\|=\|\theta_{T_\star}^{(j)}\x_{T_\star}^{(j)}(\x_{T_\star}^{(j)})^\top-\lambda_1^{(j)}\vv_1^{(j)}(\vv_1^{(j)})^\top\|\leq\varepsilon_{step}=\frac{\sqrt{\varepsilon}}{\sqrt{r_A}6^{r_A}}\leq\frac{1}{6\kappa}$. For the contradiction, suppose that $\theta_{T_\star}^{(j)}\leq0$.

    Let $\Z:=\theta_{T_\star}^{(j)}\x_{T_\star}^{(j)}(\x_{T_\star}^{(j)})^\top-\lambda_1^{(j)}\vv_1^{(j)}(\vv_1^{(j)})^\top\in\mathbb{S}^m$. Then, we have that 
    \begin{align*}
        (\vv_1^{(j)})^\top\Z\vv_1^{(j)}&=\theta_{T_\star}^{(j)}((\vv_1^{(j)})^\top\x_{T_\star}^{(j)})^2-\lambda_1^{(j)}\\
        &\leq-\lambda_1^{(j)},
    \end{align*}
    which means that $\|\Z\|\geq\lambda_1^{(j)}\geq\frac{4}{5\kappa}$, contradictory with $\|\Z\|\leq\frac{1}{6\kappa}$.

    Therefore, $\theta_{T_\star}^{(j)}>0$ for $j=0,\ldots,r_A-1$.
\end{proof}

\section{Useful Lemmas} \label{supplement.lemmas}
\begin{lemma}\label{init_with_high_probability}
        Let event $F=\{\sigma_{r_A}^2(\U^\top\X_0)\geq\frac{(r-r_A)^2}{c_1mr}\}$, where $c_1>\max\{1,36C_1^2\}$ is a universal constant, with universal constant $C_1$ given in Lemma \ref{lemma.smallest-sigma}. With respect to the randomness in $\X_0$, event $F$ occurs with probability at least $$1-\exp(-m/2)-C_3^{r-r_A+1}-\exp(-C_2r),$$
        where $C_2>0$ and $C_3=\frac{6C_1}{\sqrt{c_1}}\in(0,1)$ are universal constants.
    \end{lemma}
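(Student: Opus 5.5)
We plan to reduce the statement to a smallest-singular-value bound for a Gaussian matrix. Write $\X_0=\Z_0(\Z_0^\top\Z_0)^{-1/2}$ with $\Z_0\in\mathbb{R}^{m\times r}$ having i.i.d. $\mathcal{N}(0,1)$ entries. Then
\[
\U^\top\X_0=(\U^\top\Z_0)(\Z_0^\top\Z_0)^{-1/2}.
\]
By rotation invariance of the Gaussian distribution, $\G:=\U^\top\Z_0\in\mathbb{R}^{r_A\times r}$ again has i.i.d. standard Gaussian entries. Using $\sigma_{\min}(\mathbf{M}\mathbf{N})\ge\sigma_{\min}(\mathbf{M})\,\sigma_{\min}(\mathbf{N})$ for a wide $\mathbf{M}$ and a square invertible $\mathbf{N}$, we get
\[
\sigma_{r_A}(\U^\top\X_0)\ \ge\ \frac{\sigma_{r_A}(\G)}{\sigma_1(\Z_0)}.
\]
It therefore suffices to lower bound $\sigma_{r_A}(\G)$ and upper bound $\sigma_1(\Z_0)$, each with high probability, and then take a union bound.

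\textbf{Upper bound on $\sigma_1(\Z_0)$.} By Gaussian concentration of the Lipschitz function $\|\cdot\|$ together with $\mathbb{E}\|\Z_0\|\le\sqrt m+\sqrt r$, we have $\|\Z_0\|\le\sqrt m+\sqrt r+t$ with probability at least $1-e^{-t^2/2}$. Taking $t=\sqrt m$ and using $r\le m$ gives $\|\Z_0\|\le 3\sqrt m$ except with probability $\exp(-m/2)$. This is the source of the first failure term.

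\textbf{Lower bound on $\sigma_{r_A}(\G)$.} We invoke Lemma \ref{lemma.smallest-sigma}, presumably the Rudelson--Vershynin bound for rectangular Gaussian matrices. Applied to the $r\times r_A$ matrix $\G^\top$ (with $r>r_A$), it states that for every $\epsilon>0$,
\[
\Pr\big(\sigma_{r_A}(\G)\le\epsilon(\sqrt r-\sqrt{r_A-1})\big)\le (C_1\epsilon)^{r-r_A+1}+e^{-C_2 r}.
\]
Choose $\epsilon=6C_1/\sqrt{c_1}\cdot\frac{1}{C_1}\cdot$(appropriate scaling) so that $C_1\epsilon=C_3=6C_1/\sqrt{c_1}<1$. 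This is guaranteed by $c_1>36C_1^2$, and it produces the remaining two failure terms $C_3^{r-r_A+1}+\exp(-C_2r)$.

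\textbf{Combining.} We use
\[
\sqrt r-\sqrt{r_A-1}\ \ge\ \sqrt r-\sqrt{r_A}=\frac{r-r_A}{\sqrt r+\sqrt{r_A}}\ \ge\ \frac{r-r_A}{2\sqrt r}.
\]
Then, on the intersection of the two events,
\[
\sigma_{r_A}^2(\U^\top\X_0)\ \ge\ \frac{\epsilon^2(r-r_A)^2}{4r\cdot 9m},
\]
and we must verify that this is at least $\frac{(r-r_A)^2}{c_1 mr}$. With $\epsilon=6/\sqrt{c_1}$ we get $\epsilon^2/36=1/c_1$, which matches exactly, so $C_3=C_1\epsilon=6C_1/\sqrt{c_1}$ as in the statement.

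The main obstacle is bookkeeping. We need the exact form of Lemma \ref{lemma.smallest-sigma}, in particular whether it uses $\sqrt{r_A-1}$ or $\sqrt{r_A}$ and where the constants sit, and the numeric factors $36$ and $9$ must be tuned so that everything lines up with the stated $c_1$ and $C_3$. The probabilistic ingredients themselves are standard.
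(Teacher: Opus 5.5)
Your proposal is correct and is essentially the paper's argument: the paper invokes Lemma \ref{lemma.phi0} with $\tau=6/\sqrt{c_1}$. That lemma's proof uses exactly your steps: the polar form $\X_0=\Z_0(\Z_0^\top\Z_0)^{-1/2}$, Rudelson--Vershynin for the Gaussian matrix $\U^\top\Z_0$, $\sigma_1(\Z_0)\le 2\sqrt{m}+\sqrt{r}\le 3\sqrt{m}$ with probability $1-e^{-m/2}$, the bound $\sigma_{r_A}(\U^\top\X_0)\ge\sigma_{r_A}(\U^\top\Z_0)/\sigma_1(\Z_0)$, and a union bound, giving $C_3=C_1\tau$. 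The only cosmetic difference is that the paper keeps the numerator $r-r_A+1$ coming from $\sqrt{r}-\sqrt{r_A-1}$ and drops the $+1$ at the end, while you bound via $\sqrt{r_A}$ directly; with your garbled ``appropriate scaling'' line read as $\epsilon=6/\sqrt{c_1}$, your constants match the statement exactly.
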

    \begin{proof}
        Since the initialization $\X_0$ satisfies the conditions stated in Lemma \ref{lemma.phi0}, we can apply the lemma directly. In particular, substituting $\tau=\frac{6}{\sqrt{c_1}}$ yields the desired result.
    \end{proof}

\begin{lemma}\label{lemma.required-acc-sensing}
    If $\tr(\I_{r_A}-\bfPhi_t\bfPhi_t^\top)\leq\rho$, iterations guarantee that $f(\X_t,\bfTheta_t)\leq\rho$.
\end{lemma}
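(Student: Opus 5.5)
The plan is to exploit the choice $\mu=2$ in Algorithm \ref{alg_DMD}. By \eqref{update_theta} this choice gives $\bfTheta_t=\X_t^\top\A\X_t$ exactly, for every $t\ge 0$, regardless of $\bfTheta_{t-1}$. Writing $\PP:=\X_t\X_t^\top$ for the orthogonal projector onto $\Span(\X_t)$, the objective becomes
\[
f(\X_t,\bfTheta_t)=\tfrac14\|\PP\A\PP-\A\|_\fro^2 .
\]
So the task reduces to bounding the error of compressing $\A$ onto $\Span(\X_t)$ in terms of the principal-angle quantity $\tr(\I_{r_A}-\bfPhi_t\bfPhi_t^\top)$.

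The key step is an orthogonal splitting of the residual:
\[
\A-\PP\A\PP=(\I_m-\PP)\A+\PP\A(\I_m-\PP).
\]
The two terms have column spaces in $\range(\I_m-\PP)$ and $\range(\PP)$ respectively, which are orthogonal. Their Frobenius inner product therefore vanishes, and by Pythagoras
\[
\|\A-\PP\A\PP\|_\fro^2=\|(\I_m-\PP)\A\|_\fro^2+\|\PP\A(\I_m-\PP)\|_\fro^2 .
\]
For the second term I will use symmetry of $\A$ and $\PP$, together with $\|\PP\|\le 1$, to get
\[
\|\PP\A(\I_m-\PP)\|_\fro=\|(\I_m-\PP)\A\PP\|_\fro\le\|(\I_m-\PP)\A\|_\fro .
\]
Hence $\|\A-\PP\A\PP\|_\fro^2\le 2\|(\I_m-\PP)\A\|_\fro^2$.

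Next I will convert this bound into the trace quantity. Using $\A=\U\bfSigma\U^\top$ and the normalization $\sigma_1(\bfSigma)=1$,
\[
\|(\I_m-\PP)\U\bfSigma\U^\top\|_\fro^2=\|(\I_m-\PP)\U\bfSigma\|_\fro^2\le\|(\I_m-\PP)\U\|_\fro^2 .
\]
Since $\I_m-\PP$ is an idempotent projector,
\[
\|(\I_m-\PP)\U\|_\fro^2=\tr\big(\U^\top(\I_m-\PP)\U\big)=\tr(\I_{r_A}-\bfPhi_t\bfPhi_t^\top).
\]
Combining the three displays gives $f(\X_t,\bfTheta_t)\le\frac12\tr(\I_{r_A}-\bfPhi_t\bfPhi_t^\top)\le\frac{\rho}{2}\le\rho$.

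There is no real obstacle here. The only point needing care is the cross-term cancellation in the Pythagorean split, which follows from $(\I_m-\PP)\PP=\bm{0}$. One should also note that the lemma is applied along the algorithm's iterates, where $\bfTheta_t=\X_t^\top\A\X_t$ holds by the stepsize choice $\mu=2$, so $\bfTheta_t$ is indeed the optimal magnitude for the current direction.
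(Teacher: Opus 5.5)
Your proof is correct and follows the paper's argument. In both, $\mu=2$ reduces the residual to $\PP\A\PP-\A$, which is split into the same two pieces (the paper's $(\PP-\I_m)\A\PP+\A(\PP-\I_m)$ is, up to sign and transposition, your $(\I_m-\PP)\A+\PP\A(\I_m-\PP)$), each piece is bounded by $\|(\I_m-\PP)\U\|_\fro$ using $\|\bfSigma\|\le 1$, and $\|(\I_m-\PP)\U\|_\fro^2=\tr(\I_{r_A}-\bfPhi_t\bfPhi_t^\top)$. The only difference is that you use the orthogonality of the two pieces (Pythagoras) where the paper uses the triangle inequality, which sharpens the conclusion from $f(\X_t,\bfTheta_t)\le\rho$ to $f(\X_t,\bfTheta_t)\le\rho/2$.
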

\begin{proof}
    We have that
    \begin{align*}
        \|\X_t\bfTheta_t\X_t^\top-\A\|_\fro&=\|\X_t\X_t^\top\A\X_t\X_t^\top\|_\fro\\
        &=\|\X_t\X_t^\top\A\X_t\X_t^\top-\A\X_t\X_t^\top+\A\X_t\X_t^\top-\A\|_\fro\\
        &\leq\|(\X_t\X_t^\top-\I_m)\A\X_t\X_t^\top\|_\fro+\|\A(\X_t\X_t^\top-\I_m)\|_\fro\\
        &\stackrel{(a)}{\leq}\|(\X_t\X_t^\top-\I_m)\U\|_\fro\|\bfSigma\U^\top\X_t\X_t^\top\|+\|\U\bfSigma\|\|\U^\top(\X_t\X_t^\top-\I_m)\|_\fro\\
        &\stackrel{(b)}{\leq}2\|(\I_m-\X_t\X_t^\top)\U\|_\fro,
    \end{align*}
    where $(a)$ is by $\|\A\B\|_\fro\leq\|\A\|_\fro\|\B\|$; and $(b)$ is by $\|\bfSigma\|,\|\U\|,\|\X_t\|\leq1$.

    Since we have that
    \begin{align*}
        \|(\I_m-\X_t\X_t^\top)\U\|_\fro^2&=\tr(\U^\top(\I_m-\X_t\X_t^\top)(\I_m-\X_t\X_t^\top)^\top\U)\\
        &=\tr(\I_{r_A}-\bfPhi_t\bfPhi_t^\top)\\
        &\leq\rho,
    \end{align*}
    and combine above inequalities, we have that
    \begin{align*}
        \|\X_t\bfTheta_t\X_t^\top-\A\|_\fro^2\leq4\|(\I_m-\X_t\X_t^\top)\U\|_\fro^2\leq4\rho.
    \end{align*}
    This finishes the proof.
    \end{proof}

\begin{lemma}\label{lemma.singular_ascending}
    Denote the orthonormal complement of $\U$ be $\U_\perp\in\mathbb{R}^{m\times (m-r_A)}$. Define the $(m-r_A)\times r$ matrix $\bfPsi_t:=\U_\perp^\top\X_t$ to characterize the alignment of $\X_t$ and $\U_\perp$. Assuming $\eta\leq1$, we have that $\bfPsi_{t+1}\bfPsi_{t+1}^\top\preceq\bfPsi_t\bfPsi_t^\top$. Moreover, if $r_A\leq\frac{m}{2}$, it is guaranteed to have $\sigma_{r_A}^2(\bfPhi_{t+1})\geq\sigma_{r_A}^2(\bfPhi_t)$.
\end{lemma}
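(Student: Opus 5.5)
The plan is to get an exact one-step recursion for $\bfPsi_t=\U_\perp^\top\X_t$, read off the Loewner monotonicity from it, and then transfer this to $\bfPhi_t$ through the identity $\X_t^\top\X_t=\I_r$.

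\textbf{Step 1 (recursion for $\bfPsi_t$).} I will use the simplified form of the surrogate Riemannian gradient already obtained in the proof of Lemma \ref{lemma_saddle_points}, namely $\G_t=-(\I_m-\X_t\X_t^\top)\A\X_t$. Since $\U_\perp^\top\A=\bm{0}$, this gives
\[
\U_\perp^\top\G_t=\U_\perp^\top\X_t\X_t^\top\A\X_t=\bfPsi_t\M_t,\qquad \M_t:=\X_t^\top\A\X_t.
\]
Plugging this into the retraction \eqref{update_X} yields
\[
\bfPsi_{t+1}=\bfPsi_t(\I_r-\eta\M_t)(\I_r+\eta^2\G_t^\top\G_t)^{-1/2}.
\]

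\textbf{Step 2 (contraction of $\bfPsi_t\bfPsi_t^\top$).} By the normalization $\sigma_1(\bfSigma)=1$ and $\X_t\in\st(m,r)$, we have $\bm{0}\preceq\M_t\preceq\I_r$. With $\eta\le 1$, the symmetric matrix $\I_r-\eta\M_t$ therefore has spectrum in $[0,1]$. I will use two further facts:
\begin{itemize}
\item $(\I_r+\eta^2\G_t^\top\G_t)^{-1}\preceq\I_r$;
\item congruence preserves the Loewner order.
\end{itemize}
Together these give
\[
\bfPsi_{t+1}\bfPsi_{t+1}^\top\preceq\bfPsi_t(\I_r-\eta\M_t)^2\bfPsi_t^\top\preceq\bfPsi_t\bfPsi_t^\top,
\]
where the last inequality uses $(\I_r-\eta\M_t)^2\preceq\I_r$. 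This proves the first claim.

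\textbf{Step 3 (transfer to $\bfPhi_t$).} Because $[\U,\U_\perp]$ is orthogonal, $\bfPhi_t^\top\bfPhi_t+\bfPsi_t^\top\bfPsi_t=\X_t^\top\X_t=\I_r$. Hence
\[
\sigma_{r_A}^2(\bfPhi_t)=\lambda_{r_A}(\bfPhi_t^\top\bfPhi_t)=1-\lambda_{r-r_A+1}(\bfPsi_t^\top\bfPsi_t).
\]
The matrices $\bfPsi_t^\top\bfPsi_t$ and $\bfPsi_t\bfPsi_t^\top$ share their nonzero eigenvalues, and all their other eigenvalues are zero. So for every index $k$, $\lambda_k(\bfPsi_t^\top\bfPsi_t)$ equals $\lambda_k(\bfPsi_t\bfPsi_t^\top)$ when $k\le m-r_A$, and equals $0$ otherwise. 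The dimension condition $r_A\le m/2$ (together with $r\le m$) is there to guarantee that $k=r-r_A+1$ is a legitimate index for the $(m-r_A)\times(m-r_A)$ matrix, or else falls harmlessly into the zero case.

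By Weyl's monotonicity, the Loewner inequality from Step 2 gives $\lambda_k(\bfPsi_{t+1}\bfPsi_{t+1}^\top)\le\lambda_k(\bfPsi_t\bfPsi_t^\top)$ for all $k$. Therefore $\lambda_{r-r_A+1}(\bfPsi^\top\bfPsi)$ is nonincreasing in $t$, and so $\sigma_{r_A}^2(\bfPhi_t)$ is nondecreasing.

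\textbf{Expected obstacle.} The only delicate point is the index bookkeeping in Step 3: it matches eigenvalues of $\bfPhi_t^\top\bfPhi_t$, $\bfPsi_t^\top\bfPsi_t$ and $\bfPsi_t\bfPsi_t^\top$ across matrices of different sizes, and it is where the assumption $r_A\le m/2$ enters. Steps 1 and 2 are direct computations.
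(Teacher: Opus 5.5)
Your proposal is correct and follows the paper's proof. It uses the same recursion $\bfPsi_{t+1}=\bfPsi_t(\I_r-\eta\X_t^\top\A\X_t)(\I_r+\eta^2\G_t^\top\G_t)^{-1/2}$ and the same Loewner contraction (your congruence argument spells out what the paper compresses into a one-line remark), and it transfers to $\bfPhi_t$ through $\bfPhi_t^\top\bfPhi_t+\bfPsi_t^\top\bfPsi_t=\I_r$, which is the paper's Lemma \ref{lemma.apdx.sin-cos}. One small correction to your closing remark: your zero-case bookkeeping already covers every dimension configuration, so the hypothesis $r_A\le m/2$ never actually enters your argument for the second claim.
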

\begin{proof}
    From update \eqref{update_X}, we have that
    \begin{align*}
        \bfPsi_{t+1}&=\U_\perp^\top(\X_t-\eta\G_t)(\I_r+\eta^2\G_t^\top\G_t)^{-1/2}\\
        &=(\bfPsi_t+\eta\U_\perp^\top(\I_m-\X_t\X_t^\top)\A\X_t)(\I_r+\eta^2\G_t^\top\G_t)^{-1/2}\\
        &=(\bfPsi_t-\eta\bfPsi_t\bfTheta_t)(\I_r+\eta^2\G_t^\top\G_t)^{-1/2}\\
        &=\bfPsi_t(\I_r-\eta\bfTheta_t)(\I_r+\eta^2\G_t^\top\G_t)^{-1/2}.
    \end{align*}
    
    With this, we can see that
    \begin{align*}
        \bfPsi_{t+1}\bfPsi_{t+1}^\top&=\bfPsi_t(\I_r-\eta\bfTheta_t)(\I_r+\eta^2\G_t^\top\G_t)^{-1}(\I_r-\eta\bfTheta_t)\bfPsi_t^\top\\
        &\stackrel{(a)}{\preceq}\bfPsi_t\bfPsi_t^\top,
    \end{align*}
    where $(a)$ follows from the fact that the three matrices in between are all PSD and their largest eigenvalue is smaller than 1 given our choice of $\eta$. This gives the proof of the first part of this lemma.

    To show $\sigma_{r_A}^2(\bfPhi_{t+1})\geq\sigma_{r_A}^2(\bfPhi_t)$, notice that given $2r_A\leq m$, we have from Lemma \ref{lemma.apdx.sin-cos}, which shows that $\sigma_{r_A}^2(\bfPhi_t)=1-\sigma_{r+1-r_A}^2(\bfPsi_t)$ and $\sigma_{r_A}^2(\bfPhi_{t+1})=1-\sigma_{r+1-r_A}^2(\bfPsi_{t+1})$. The conclusion is straightforward.
\end{proof}

 \begin{lemma}\label{lemma.apdx.trace-lower-bound}
	Suppose that $\PP$ and $\Q$ are $m \times m$ diagonal matrices, with non-negative diagonal entries. Let $\bfS\in\mathbb{S}^{m}$ be a positive definite matrix with smallest eigenvalue $\lambda_{\min}$, then we have that 
	\begin{align*}
		\tr(\PP\bfS\Q)  \geq \lambda_{\min}\tr(\PP\Q).
	\end{align*}
\end{lemma}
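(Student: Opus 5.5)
Since $\PP$ and $\Q$ are diagonal, the trace expands entrywise, and the whole argument rests on the diagonal of $\bfS$. Writing $\PP=\diag(p_1,\ldots,p_m)$ and $\Q=\diag(q_1,\ldots,q_m)$ with $p_i,q_i\ge 0$, we have $(\PP\bfS\Q)_{ii}=p_i\bfS_{ii}q_i$, so
\begin{align*}
\tr(\PP\bfS\Q)=\sum_{i=1}^m p_i q_i\,\bfS_{ii},\qquad \tr(\PP\Q)=\sum_{i=1}^m p_i q_i .
\end{align*}
Only the diagonal entries of $\bfS$ enter, and they are weighted by the nonnegative numbers $p_iq_i$. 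The off-diagonal entries never appear because $\PP$ and $\Q$ are diagonal.

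The remaining ingredient is the Rayleigh quotient bound. For each $i$, $\bfS_{ii}=\e_i^\top\bfS\e_i\ge\lambda_{\min}\|\e_i\|^2=\lambda_{\min}$, since $\bfS$ is symmetric and its Rayleigh quotient is bounded below by its smallest eigenvalue. Multiplying each of these bounds by $p_iq_i\ge 0$ and summing gives $\tr(\PP\bfS\Q)\ge\lambda_{\min}\sum_i p_iq_i=\lambda_{\min}\tr(\PP\Q)$. Positive definiteness is not even needed, only symmetry; the sign of $\lambda_{\min}$ does not matter because the weights are nonnegative.

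No step is a real obstacle here. The only point requiring care is that nonnegativity of the diagonal entries of $\PP$ and $\Q$ is what makes the weights $p_iq_i$ nonnegative, and this is exactly what preserves the inequality direction. In the application (step $(c)$ and the bound involving $\bfS_t$ in the proof of Theorem \ref{theorem_DMD}) these matrices are $\I_{r_A}-\bfLambda_t^2$ and $\bfLambda_t^2$, whose entries lie in $[0,1]$, so the hypotheses are satisfied there.
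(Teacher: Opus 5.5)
Your proof is correct and is essentially the paper's argument: expand the trace as $\sum_i p_i q_i \bfS_{ii}$, bound each diagonal entry by $\bfS_{ii}=\e_i^\top\bfS\e_i\ge\lambda_{\min}$, and use the nonnegative weights $p_iq_i$ to sum these bounds without reversing the inequality. Your side observation that only symmetry of $\bfS$ is needed, not positive definiteness, is also correct.
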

\begin{proof}
	Let $p_i$ and $q_i$ be the $(i,i)$-th entry of $\PP$ and $\Q$, respectively. Then we have that
	\begin{align*}
		\tr(\PP\bfS\Q) = \sum_i p_i \bfS_{i,i} q_i \geq \lambda_{\min}\sum_i p_i  q_i = \lambda_{\min}\tr(\PP\Q),
	\end{align*}
	where the last inequality comes from $\bfS$ being positive definite, i.e., $\bfS_{i,i} = \mathbf{e}_i^\top \bfS \mathbf{e}_i \geq \lambda_{\min}$. 
\end{proof}

\begin{lemma}\label{apdx.lemma.inverse}
	Given a PSD matrix $\A$, we have that $(\I + \A)^{-1} \succeq \I - \A$.	
    \end{lemma}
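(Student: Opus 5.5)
The natural route is to diagonalize $\A$ and reduce the matrix inequality to a scalar one about its eigenvalues. Let $\A=\V\bfLambda\V^\top$ be an eigendecomposition with $\V$ orthogonal and $\bfLambda=\diag(a_1,\ldots,a_m)$, $a_i\ge 0$ because $\A$ is PSD. Then
\[
\I+\A=\V(\I+\bfLambda)\V^\top \quad\text{and}\quad (\I+\A)^{-1}=\V(\I+\bfLambda)^{-1}\V^\top ,
\]
where the inverse exists since every eigenvalue of $\I+\A$ is at least $1$. Similarly, $\I-\A=\V(\I-\bfLambda)\V^\top$.

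It follows that
\[
(\I+\A)^{-1}-(\I-\A)=\V\,\diag\!\Big(\tfrac{1}{1+a_i}-(1-a_i)\Big)\V^\top .
\]
A congruence by an orthogonal matrix preserves positive semidefiniteness, so it suffices to check that each diagonal entry is nonnegative.

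That reduces to the scalar claim $\frac{1}{1+a}\ge 1-a$ for $a\ge0$. Multiplying by $1+a>0$, it is equivalent to $1\ge (1-a)(1+a)=1-a^2$, which holds trivially. Equivalently, each diagonal entry equals $\frac{a_i^2}{1+a_i}\ge 0$.

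None of these steps is a real obstacle. The only point needing care is to use one common eigenbasis for both $(\I+\A)^{-1}$ and $\I-\A$, which holds because both are functions of $\A$. With that in place, the matrix inequality reduces to the entrywise scalar inequality above.
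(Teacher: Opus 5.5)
Your proof is correct and is the same argument as the paper's: diagonalize $\A$, use a common eigenbasis, and reduce to the scalar inequality $\frac{1}{1+\lambda}\ge 1-\lambda$ for $\lambda\ge 0$. You also write out details the paper's one-line proof leaves implicit, namely invertibility of $\I+\A$ and the identity $\frac{1}{1+a_i}-(1-a_i)=\frac{a_i^2}{1+a_i}\ge 0$.
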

    \begin{proof}
	Diagonalizing both sides and using $1/(1 + \lambda) \geq 1 - \lambda, \forall \lambda \geq 0$ yields the result.
    \end{proof}
    
\begin{lemma}\label{lemma.apdx.sin-cos}
	Let $\X \in \st(m, r)$ and $\U \in \st(m, r_A)$. Let $\U_\perp \in \mathbb{R}^{m \times (m -r_A)}$ be an orthonormal basis for the orthogonal complement of $\Span(\U)$. Denote $\bfPhi = \U^\top \X\in\mathbb{R}^{r_A\times r}$ and $\bfPsi = \U_\perp^\top \X\in\mathbb{R}^{(m-r_A)\times r}$. It is guaranteed that $\sigma_i^2(\bfPhi) + \sigma_{r+1-i}^2(\bfPsi) = 1$ holds for $i \in \{1, 2, \ldots, r \}$.
\end{lemma}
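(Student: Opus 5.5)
The plan is to use the fact that $[\U,\U_\perp]$ is an $m\times m$ orthogonal matrix. Hence
\[
\X^\top\X=\X^\top\U\U^\top\X+\X^\top\U_\perp\U_\perp^\top\X,
\qquad\text{i.e.}\qquad
\bfPhi^\top\bfPhi+\bfPsi^\top\bfPsi=\I_r .
\]
Both summands are $r\times r$ PSD matrices. Since they add up to the identity, they commute and are simultaneously diagonalizable. More directly, any eigenvector $\bfv$ of $\bfPhi^\top\bfPhi$ with eigenvalue $\lambda$ is an eigenvector of $\bfPsi^\top\bfPsi=\I_r-\bfPhi^\top\bfPhi$ with eigenvalue $1-\lambda$. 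So the spectrum of $\bfPsi^\top\bfPsi$ is $\{1-\lambda_j(\bfPhi^\top\bfPhi)\}_{j=1}^r$, counted with multiplicity.

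Next, I would pass to singular values using the convention that $\sigma_i(\cdot)$ for $i$ larger than the number of rows (or columns) is zero. The eigenvalues of $\bfPhi^\top\bfPhi\in\mathbb{S}_+^r$, in decreasing order, are $\sigma_1^2(\bfPhi)\ge\cdots\ge\sigma_r^2(\bfPhi)$, padded with zeros when $i>r_A$. Likewise, the eigenvalues of $\bfPsi^\top\bfPsi$ are $\sigma_1^2(\bfPsi)\ge\cdots\ge\sigma_r^2(\bfPsi)$, padded with zeros when $i>m-r_A$. The map $\lambda\mapsto 1-\lambda$ reverses order, so the $i$-th largest eigenvalue of $\bfPhi^\top\bfPhi$ corresponds to the $(r+1-i)$-th largest eigenvalue of $\bfPsi^\top\bfPsi$. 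This gives $\sigma_i^2(\bfPhi)+\sigma_{r+1-i}^2(\bfPsi)=1$ for every $i\in\{1,\ldots,r\}$.

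The only delicate step is bookkeeping. I need to make sure the padded zero singular values are consistent with the eigenvalue lists of the $r\times r$ Gram matrices, which holds because $\bfPhi^\top\bfPhi$ has exactly $r$ eigenvalues. I also need to handle ties when sorting, but a multiset that is mapped order-reversingly still matches position by position. Nothing else is needed, and in particular the assumption $2r_A\le m$ plays no role in this lemma itself.
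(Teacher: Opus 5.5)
Your proposal is correct and follows essentially the same route as the paper: both derive $\bfPhi^\top\bfPhi+\bfPsi^\top\bfPsi=\I_r$ from the orthogonality of $[\U,\U_\perp]$, then read off the order-reversed spectra from the shared eigenvectors. Your explicit treatment of the zero-padding convention for $\sigma_i$ beyond the matrix dimensions, and of the order reversal under $\lambda\mapsto 1-\lambda$, makes precise a step the paper leaves implicit.
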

\begin{proof}
	Since $\X$ lies in the Stiefel manifold, we have that 
	\begin{align}\label{eq.apdx.commute}
		\I_r & = \X^\top \X = \X^\top  \I_m \X = \X^\top [\U, \U_\perp]
		\begin{bmatrix} 
			\U^\top \\
			\U_\perp^\top
		\end{bmatrix}
		\X  \\
		& = \bfPhi^\top \bfPhi  + \bfPsi^\top \bfPsi.  \nonumber
	\end{align}
	
	Equation \eqref{eq.apdx.commute} shows that $\bfPsi^\top \bfPsi$ and $\bfPhi^\top \bfPhi $ commute, i.e.,
	\begin{align*}
		(\bfPhi^\top \bfPhi)	 (\bfPsi^\top \bfPsi) & = (\bfPhi^\top \bfPhi)	 (\I_r - \bfPhi^\top \bfPhi) = \bfPhi^\top \bfPhi  - \bfPhi^\top \bfPhi\bfPhi^\top \bfPhi \\
		& = (\I_r - \bfPhi^\top \bfPhi) (\bfPhi^\top \bfPhi)	 = (\bfPsi^\top \bfPsi) (\bfPhi^\top \bfPhi). 
	\end{align*}
	The commutativity shows that the eigenspaces of $\bfPhi^\top \bfPhi$ and $\bfPsi^\top \bfPsi$ coincide. As a result, we have again from \eqref{eq.apdx.commute} that $\sigma_i^2(\bfPhi) + \sigma_{r+1-i}^2(\bfPsi) = 1$ for $i \in \{1, 2, \ldots, r \}$.
\end{proof}

\begin{lemma}\label{apdx.lemma.control_second_order}
    Let $\X\in\st(m,r)$ and $\U\in\st(m,r_A)$. Let $\U_\perp\in\mathbb{R}^{m\times (m-r_A)}$ be an orthonormal basis for the orthogonal complement of $\Span(\U)$. Let $\bar{\U}=[\U,\U_\perp]\in\mathbb{R}^{m\times m}$. Denote by $\phi_{i,j}$ the $(i,j)$-th entry of $\bar{\U}^\top\X\X^\top\bar{\U}$. It is guaranteed that $\phi_{i,i}=\sum_{j=1}^m\phi_{i,j}^2$ for any $i\in\{1,2,\ldots,m\}$.
\end{lemma}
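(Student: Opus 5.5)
The identity $\phi_{i,i}=\sum_{j=1}^m\phi_{i,j}^2$ follows from idempotency of the projector. Set $\M:=\bar{\U}^\top\X\X^\top\bar{\U}\in\mathbb{R}^{m\times m}$, so that $\phi_{i,j}=\M_{i,j}$. The plan is to show that $\M$ is a symmetric idempotent matrix, i.e. an orthogonal projector. The claimed identity is then just the $(i,i)$ entry of the relation $\M=\M^2=\M\M^\top$.

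The key steps are as follows. First, $\M$ is symmetric because $\X\X^\top$ is symmetric, so $\M_{i,j}=\M_{j,i}$. Second, since $\X\in\st(m,r)$ we have $\X^\top\X=\I_r$, and hence $(\X\X^\top)^2=\X\X^\top$. Third, $\bar{\U}=[\U,\U_\perp]$ is an orthogonal $m\times m$ matrix, so $\bar{\U}\bar{\U}^\top=\I_m$. Combining these gives
\begin{align*}
\M^2=\bar{\U}^\top\X\X^\top\bar{\U}\bar{\U}^\top\X\X^\top\bar{\U}=\bar{\U}^\top(\X\X^\top)^2\bar{\U}=\M.
\end{align*}
Finally, reading off the diagonal entry gives $\phi_{i,i}=(\M^2)_{i,i}=\sum_{j=1}^m\M_{i,j}\M_{j,i}=\sum_{j=1}^m\phi_{i,j}^2$, where the last equality uses symmetry.

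There is no genuine obstacle. The only point needing care is the third step: the full orthogonality $\bar{\U}\bar{\U}^\top=\I_m$ is required, and this holds because $\U_\perp$ completes $\U$ to an orthonormal basis of $\mathbb{R}^m$. Using $\U$ alone would not suffice, because then $\bar{\U}\bar{\U}^\top$ would not be the identity and the middle factor could not be removed.
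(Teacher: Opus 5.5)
Your proof is correct and is the same argument as the paper's. Both show that $\bar{\U}^\top\X\X^\top\bar{\U}$ is idempotent using $\bar{\U}\bar{\U}^\top=\I_m$ and $\X^\top\X=\I_r$, then read off the $(i,i)$ entry of its square. You also state explicitly the symmetry step $\phi_{i,j}\phi_{j,i}=\phi_{i,j}^2$, which the paper uses only implicitly.
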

\begin{proof}
    Let $\bar{\U}=[\uu_1,\uu_2,\ldots,\uu_m]$, where $\uu_1,\uu_2,\ldots,\uu_m\in\mathbb{R}^m$. Then, we have that
    \begin{align*}
        \bar{\U}^\top\X\X^\top\bar{\U}&=\begin{bmatrix}
            \uu_1^\top\\
            \uu_2^\top\\
            \vdots\\
            \uu_m^\top
        \end{bmatrix}
        \X\X^\top\begin{bmatrix}
            \uu_1,\uu_2,\ldots,\uu_m
        \end{bmatrix}\\
        &=\begin{bmatrix}
            \uu_1^\top\X\X^\top\uu_1&\uu_1^\top\X\X^\top\uu_2&\cdots&\uu_1^\top\X\X^\top\uu_m\\
            \uu_2^\top\X\X^\top\uu_1&\uu_2\X\X^\top\uu_2&\cdots&\uu_2\X\X^\top\uu_m\\
            \vdots & \ddots & \ddots & \vdots\\
            \uu_m^\top\X\X^\top\uu_m & \uu_m^\top\X\X^\top\uu_2 & \cdots & \uu_m^\top\X\X^\top\uu_m
        \end{bmatrix}.
    \end{align*}

    Since $\bar{\U}^\top\bar{\U}=\bar{\U}\bar{\U}^\top=\I_m$ and $\X^\top\X=\I_r$, it follows that $(\bar{\U}^\top\X\X^\top\bar{\U})^2=\bar{\U}^\top\X\X^\top\bar{\U}$.

    Then, $[(\bar{\U}^\top\X\X^\top\bar{\U})^2]_{i,i}=(\bar{\U}^\top\X\X^\top\bar{\U})_{i,i}$, which means that
    \begin{align*}
        \sum_{j=1}^{m}(\uu_i^\top\X\X^\top\uu_j)^2=\uu_i^\top\X\X^\top\uu_i,
    \end{align*}
    i.e., $\phi_{i,i}=\sum_{j=1}^m\phi_{i,j}^2$ for any $i\in\{1,2,\ldots,m\}$.
\end{proof}

\begin{lemma}\label{apdx.lemma.trace_control_norm}
    Assume that $\frac{\sigma_i}{\sigma_{i+1}}\geq\beta,\,i=1,2,\ldots,r_A-1$, for some $\beta>1$. Let $\A\in\mathbb{S}^m$ be a PSD matrix and $\U\bfSigma\U^\top$ be its compact SVD, where $\U=[\uu_1,\uu_2,\ldots,\uu_{r_A}]\in\mathbb{R}^{m\times r_A}$ and $\bfSigma\in\mathbb{S}^{r_A}$. Let $\X(t)\in\st(m,r)$ and $\phi_{i,i}(t):=\uu_i^\top\X(t)\X(t)^\top\uu_i$. Assume that $\phi_{i,i}(t)\geq1-\varepsilon,\,i=1,2,\ldots,j$ and $\phi_{i,i}(t)\leq\sqrt{\varepsilon},\,i=j+1,\ldots,r_A$. Then it holds that $\|\X(t)\bfTheta(t)\X(t)^\top-\A_j\|_\fro^2\leq\frac{4\beta^2}{(\beta-1)^2}\cdot\sigma_1^2\varepsilon$, where $\A_j:=\argmin\limits_{\rank(\hat{\A})\leq j}\, \|\hat{\A}-\A\|_\fro^2$.
\end{lemma}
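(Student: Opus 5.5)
Throughout, $\bfTheta(t)=\X(t)^\top\A\X(t)$, so with $\PP:=\X(t)\X(t)^\top$ we have $\X(t)\bfTheta(t)\X(t)^\top=\PP\A\PP=\sum_{k=1}^{r_A}\sigma_k\,\PP\uu_k\uu_k^\top\PP$. Write $\A_j=\sum_{k\le j}\sigma_k\uu_k\uu_k^\top$ and split
\[
\PP\A\PP-\A_j=\sum_{k\le j}\sigma_k\bigl(\PP\uu_k\uu_k^\top\PP-\uu_k\uu_k^\top\bigr)+\sum_{k>j}\sigma_k\PP\uu_k\uu_k^\top\PP .
\]
Apply the triangle inequality in Frobenius norm and bound each rank-one piece separately. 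Then sum the resulting bounds, using the geometric decay $\sigma_k\le\sigma_1\beta^{-(k-1)}$. This decay is where the factor $\beta/(\beta-1)$ arises: $\sum_{k\ge1}\beta^{-(k-1)}=\beta/(\beta-1)$. Squaring the resulting sum then gives the claimed $\beta^2/(\beta-1)^2$.

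For the tail terms $k>j$, use $\|\PP\uu_k\uu_k^\top\PP\|_\fro=\|\PP\uu_k\|^2=\phi_{k,k}(t)\le\sqrt{\varepsilon}$. This gives a contribution of at most $\sqrt\varepsilon\sum_{k>j}\sigma_k$. Since $\sum_{k>j}\sigma_k\le\sigma_{j+1}\beta/(\beta-1)\le\sigma_1\beta/(\beta-1)$, the tail is bounded by $\sigma_1\sqrt\varepsilon\,\beta/(\beta-1)$.

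For the head terms $k\le j$, write $\PP\uu_k=\uu_k-\e_k$ with $\e_k:=(\I_m-\PP)\uu_k$, so that $\|\e_k\|^2=1-\phi_{k,k}\le\varepsilon$. Then
\[
\PP\uu_k\uu_k^\top\PP-\uu_k\uu_k^\top=-\e_k\uu_k^\top-\uu_k\e_k^\top+\e_k\e_k^\top,
\]
whose Frobenius norm is at most $2\|\e_k\|+\|\e_k\|^2\le 2\sqrt\varepsilon+\varepsilon$. A cleaner way to get a tight constant is to compute exactly: $\|\PP\uu_k\uu_k^\top\PP-\uu_k\uu_k^\top\|_\fro^2=1-\phi_{k,k}^2\le 2\varepsilon$, which follows by expanding and using $\|\PP\uu_k\|^2=\phi_{k,k}$. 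Either way, each head term is $O(\sigma_k\sqrt\varepsilon)$. Summing over $k\le j$ with the geometric bound gives at most $\sqrt2\,\sigma_1\sqrt\varepsilon\,\beta/(\beta-1)$.

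Adding the head and tail bounds, $\|\PP\A\PP-\A_j\|_\fro\le(\sqrt2+1)\sigma_1\sqrt\varepsilon\,\beta/(\beta-1)$. Squaring yields roughly $5.83\,\beta^2\sigma_1^2\varepsilon/(\beta-1)^2$, which is slightly above the claimed constant $4$.

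The main obstacle is therefore matching the constant $4$. To recover it, I would avoid the crude triangle inequality across the head/tail split. Instead, I would expand $\|\cdot\|_\fro^2$ in the basis $[\U,\U_\perp]$ and exploit orthogonality: the head error lies mostly in the directions $\uu_k$, $k\le j$, together with their $\PP$-leakage, while the tail lies in $\PP\uu_k$, $k>j$. Alternatively, I would bound the whole error as $\sum_k\sigma_k c_k$ with $c_k\le2\sqrt\varepsilon$ for every $k$. For the head this holds since $\sqrt{1-\phi_{k,k}^2}\le\sqrt{2\varepsilon}\le2\sqrt\varepsilon$, and for the tail since $\phi_{k,k}\le\sqrt\varepsilon\le2\sqrt\varepsilon$. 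This gives $\|\PP\A\PP-\A_j\|_\fro\le2\sqrt\varepsilon\,\sigma_1\beta/(\beta-1)$, and squaring yields exactly $\frac{4\beta^2}{(\beta-1)^2}\sigma_1^2\varepsilon$. This unified per-index bound is the route I would commit to.
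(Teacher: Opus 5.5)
Your committed route (triangle inequality over the rank-one pieces of $\X(t)\X(t)^\top\A\X(t)\X(t)^\top-\A_j$, a uniform per-index bound $c_k\le 2\sqrt{\varepsilon}$ with the head handled through $\|(\I_m-\PP)\uu_k\|=\sqrt{1-\phi_{k,k}}$ and the tail through $\phi_{k,k}\le\sqrt{\varepsilon}$, then $\sum_{k=1}^{r_A}\sigma_k\le\sigma_1\beta/(\beta-1)$) is correct and is essentially the paper's own proof. The constant ``obstacle'' came only from summing the head and tail geometric series separately. Your exact computation $\|\PP\uu_k\uu_k^\top\PP-\uu_k\uu_k^\top\|_\fro^2=1-\phi_{k,k}^2\le 2\varepsilon$ is slightly sharper than the paper's bound $2\sqrt{1-\phi_{k,k}}$, and with it the same argument gives constant $2$ in place of $4$.
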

\begin{proof}
    From the Eckart–Young–Mirsky theorem, we have that the best rank-$j$ approximation of $\A$ under the Frobenius norm is $\A_j=\sum_{k=1}^{j}\sigma_k\uu_k\uu_k^\top$. Then, we have the following upper bound
    \begin{align*}
        \|\X(t)\bfTheta(t)\X(t)^\top-\A_j\|_\fro&=\|\X(t)\X(t)^\top(\sum_{k=1}^{r_A}\sigma_k\uu_k\uu_k^\top)\X(t)\X(t)^\top-\sum_{k=1}^{j}\sigma_k\uu_k\uu_k^\top\|_\fro\\
        &\leq\|\sum_{k=1}^{j}\sigma_k(\X(t)\X(t)^\top\uu_k\uu_k^\top\X(t)\X(t)^\top-\uu_k\uu_k^\top)\|_\fro\\
        &~~~+\|\sum_{k=j+1}^{r_A}\sigma_k\X(t)\X(t)^\top\uu_k\uu_k^\top\X(t)\X(t)^\top\|_\fro.
    \end{align*}

    For the first term,
    \begin{align*}                              
        &\|\sum_{k=1}^{j}\sigma_k(\X(t)\X(t)^\top\uu_k\uu_k^\top\X(t)\X(t)^\top-\uu_k\uu_k^\top)\|_\fro\\
        &~~~\leq\sum_{k=1}^{j}\sigma_k\|\X(t)\X(t)^\top\uu_k\uu_k^\top\X(t)\X(t)^\top-\uu_k\uu_k^\top\X(t)\X(t)^\top+\uu_k\uu_k^\top\X(t)\X(t)^\top-\uu_k\uu_k^\top\|_\fro\\
        &~~~\leq\sum_{k=1}^{j}\sigma_k(\|(\X(t)\X(t)^\top\uu_k-\uu_k)\uu_k^\top\X(t)\X(t)^\top\|_\fro+\|\uu_k(\uu_k^\top\X(t)\X(t)^\top-\uu_k^\top)\|_\fro)\\
        &~~~\stackrel{(a)}{\leq}2\sum_{k=1}^{j}\sigma_k\|\X(t)\X(t)^\top\uu_k-\uu_k\|_\fro\\
        &~~~=2\sum_{k=1}^{j}\sigma_k\sqrt{1-\phi_{k,k}(t)}\\
        &~~~\leq2\sqrt{\varepsilon}\sum_{k=1}^{j}\sigma_k,
    \end{align*}
    where $(a)$ is by $\|\X(t)\|,\|\uu_k\|\leq1$.
    
    For the second term,
    \begin{align*}
        \|\sum_{k=j+1}^{r_A}\sigma_k\X(t)\X(t)^\top\uu_k\uu_k^\top\X(t)\X(t)^\top\|_\fro&\leq\sum_{k=j+1}^{r_A}\sigma_k\|\X(t)\X(t)^\top\uu_k\uu_k^\top\X(t)\X(t)^\top\|_\fro\\
        &=\sum_{k=j+1}^{r_A}\sigma_k\|\uu_k^\top\X(t)\X(t)^\top\|_\fro^2\\
        &=\sum_{k=j+1}^{r_A}\sigma_k\phi_{k,k}(t)\\
        &\leq\sqrt{\varepsilon}\sum_{k=j+1}^{r_A}\sigma_k.
    \end{align*}

    Combining these upper bounds, we arrive at
    \begin{align*}
        \|\X(t)\bfTheta(t)\X(t)^\top-\A_j\|_\fro\leq2\sqrt{\varepsilon}\sum_{k=1}^{r_A}\sigma_k\leq\frac{2\beta}{\beta-1}\sqrt{\varepsilon},
    \end{align*}
    i.e., $\|\X(t)\bfTheta(t)\X(t)^\top-\A_j\|_\fro^2\leq\frac{4\beta^2}{(\beta-1)^2}\cdot\sigma_1^2\varepsilon$.
\end{proof}

\begin{lemma}\label{apdx.lemma.crossed_terms}
    Let $\{\uu_i\}_{i=1}^m\subset\mathbb{R}^m$ be an orthonormal basis of $\mathbb{R}^m$. Let $\X(t)\in\st(m,r)$ and $\phi_{i,j}(t):=\uu_i^\top\X(t)\X(t)^\top\uu_j$. Then it holds that for any $i\neq j$, $\phi_{i,j}^2(t)\leq\min\{\phi_{i,i}(t)\phi_{j,j}(t),$\\
    $(1-\phi_{i,i}(t))(1-\phi_{j,j}(t))\}$.
\end{lemma}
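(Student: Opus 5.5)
Write $\PP:=\X(t)\X(t)^\top$, drop the argument $t$, and set $\phi_{i,j}=\uu_i^\top\PP\uu_j$. The lemma bounds $\phi_{i,j}^2$ by two products, and both come from the fact that $\PP$ is an orthogonal projector: $\PP$ is symmetric and $\PP^2=\X\X^\top\X\X^\top=\X\X^\top=\PP$ because $\X^\top\X=\I_r$. The plan is to prove each bound by a single Cauchy--Schwarz inequality, applied once to $\PP$ and once to the complementary projector $\I_m-\PP$.

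\textbf{First bound.} Since $\PP$ is symmetric and idempotent, $\phi_{i,j}=\uu_i^\top\PP\PP\uu_j=(\PP\uu_i)^\top(\PP\uu_j)$. Cauchy--Schwarz then gives
$$\phi_{i,j}^2\leq\|\PP\uu_i\|^2\,\|\PP\uu_j\|^2 .$$
Using idempotence again, $\|\PP\uu_i\|^2=\uu_i^\top\PP\uu_i=\phi_{i,i}$, and likewise for $j$. Hence $\phi_{i,j}^2\leq\phi_{i,i}\phi_{j,j}$.

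\textbf{Second bound.} The matrix $\I_m-\PP$ is also a symmetric idempotent projector. For $i\neq j$ the basis vectors are orthonormal, so $\uu_i^\top\uu_j=0$. This gives
$$\uu_i^\top(\I_m-\PP)\uu_j=-\phi_{i,j}.$$
Applying the same Cauchy--Schwarz argument to $\I_m-\PP$ yields $\phi_{i,j}^2\leq\|(\I_m-\PP)\uu_i\|^2\,\|(\I_m-\PP)\uu_j\|^2$. Because $\|\uu_i\|=1$, we have $\|(\I_m-\PP)\uu_i\|^2=1-\phi_{i,i}$, and similarly $\|(\I_m-\PP)\uu_j\|^2=1-\phi_{j,j}$. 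Hence $\phi_{i,j}^2\leq(1-\phi_{i,i})(1-\phi_{j,j})$.

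Taking the smaller of the two bounds gives the claimed minimum. No step is a real obstacle. The only point to state explicitly is where $i\neq j$ is used, namely the orthogonality $\uu_i^\top\uu_j=0$ in the second bound. As an alternative route, one could apply Lemma~\ref{apdx.lemma.control_second_order} to the $2\times2$ principal submatrices of $\PP$ and of $\I_m-\PP$, both of which are positive semidefinite, but the direct Cauchy--Schwarz argument is shorter.
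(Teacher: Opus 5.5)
Your proof is correct and is essentially the paper's argument. The first bound is Cauchy--Schwarz for the projector $\X\X^\top$. For the second, you use $\uu_i^\top\uu_j=0$ to rewrite $\phi_{i,j}$, up to sign, as $\uu_i^\top(\I_m-\X\X^\top)\uu_j$, then apply idempotence of $\I_m-\X\X^\top$ and Cauchy--Schwarz to get $(1-\phi_{i,i})(1-\phi_{j,j})$, exactly as the paper does.
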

\begin{proof}
    By the Cauchy-Schwartz inequality, we can directly derive that
    \begin{align*}
        \phi_{i,j}^2(t)&=(\uu_i^\top\X(t)\X(t)^\top\uu_j)^2\\
        &\leq(\uu_i^\top\X(t)\X(t)^\top\uu_i)(\uu_j^\top\X(t)\X(t)^\top\uu_j)\\
        &=\phi_{i,i}(t)\phi_{j,j}(t).
    \end{align*}
    We just need to prove that $\phi_{i,j}^2(t)\leq(1-\phi_{i,i}(t))(1-\phi_{j,j}(t))$.

    Since $i\neq j$, we have that $\uu_i^\top\uu_j=0$, which leads to
    \begin{align*}
        \phi_{i,j}^2(t)&=(\uu_i^\top\X(t)\X(t)^\top\uu_j)^2\\
        &=[\uu_i^\top(\I_m-\X(t)\X(t)^\top)\uu_j]^2\\
        &=[\uu_i^\top(\I_m-\X(t)\X(t)^\top)(\I_m-\X(t)\X(t)^\top)\uu_j]^2\\
        &\stackrel{(a)}{\leq} \|\uu_i^\top(\I_m-\X(t)\X(t)^\top)\|^2\cdot \|\uu_j^\top(\I_m-\X(t)\X(t)^\top)\|^2\\
        &=(1-\phi_{i,i}(t))(1-\phi_{j,j}(t)),
    \end{align*}
    where $(a)$ is by Cauchy-Schwartz inequality.

    This completes the proof.
\end{proof}

\begin{lemma}\label{apdx.lemma.singular_ascending_cont}
    Denote the orthonormal complement of $\U$ as $\U_\perp\in\mathbb{R}^{m\times (m-r_A)}$. Define the $(m-r_A)\times r$ matrix $\bfPsi(t):=\U_\perp^\top\X(t)$ to characterize the alignment of $\X(t)$ and $\U_\perp$. We have that for any $t_2\geq t_1\geq0$, $\bfPsi(t_2)\bfPsi(t_2)^\top\preceq\bfPsi(t_1)\bfPsi(t_1)^\top$. Moreover, if $r_A\leq\frac{m}{2}$, it is guaranteed to have $\sigma_{r_A}^2(\bfPhi(t_2))\geq\sigma_{r_A}^2(\bfPhi(t_1))$.
\end{lemma}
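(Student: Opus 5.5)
The plan is to mirror the discrete-time argument of Lemma \ref{lemma.singular_ascending} in continuous time, by differentiating $\bfPsi(t)\bfPsi(t)^\top$ along the flow \eqref{dynamics_cont}. Since $\U_\perp^\top\A=\bm{0}$, we have
\[
\dot{\bfPsi}(t)=\U_\perp^\top(\I_m-\X(t)\X(t)^\top)\A\X(t)=-\bfPsi(t)\X(t)^\top\A\X(t)=-\bfPsi(t)\bfTheta(t).
\]
The matrix $\bfTheta(t)=\X(t)^\top\A\X(t)$ is PSD because $\A$ is PSD. Hence
\[
\frac{d}{dt}\bigl(\bfPsi(t)\bfPsi(t)^\top\bigr)=-2\bfPsi(t)\bfTheta(t)\bfPsi(t)^\top\preceq\bm{0}.
\]
For any fixed vector $\bfv$, the scalar function $t\mapsto\bfv^\top\bfPsi(t)\bfPsi(t)^\top\bfv$ therefore has nonpositive derivative. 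It is thus nonincreasing, which gives $\bfPsi(t_2)\bfPsi(t_2)^\top\preceq\bfPsi(t_1)\bfPsi(t_1)^\top$ for all $t_2\geq t_1$. This needs only that the flow keeps $\X(t)\in\st(m,r)$. That holds because the vector field $(\I_m-\X\X^\top)\A\X$ satisfies $\X^\top\dot{\X}+\dot{\X}^\top\X=\bm{0}$ whenever $\X^\top\X=\I_r$, so $\frac{d}{dt}(\X^\top\X)=\bm{0}$ and $\X^\top\X\equiv\I_r$.

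For the second claim, the Loewner ordering implies that every eigenvalue of $\bfPsi(t)\bfPsi(t)^\top$ is nonincreasing, by Weyl's monotonicity (Courant--Fischer). Its nonzero eigenvalues coincide with those of $\bfPsi(t)^\top\bfPsi(t)$, so each $\sigma_i^2(\bfPsi(t))$ is nonincreasing in $t$. I then apply Lemma \ref{lemma.apdx.sin-cos} with $i=r_A$, which gives $\sigma_{r_A}^2(\bfPhi(t))=1-\sigma_{r+1-r_A}^2(\bfPsi(t))$. Since the right-hand side is nondecreasing in $t$, we obtain $\sigma_{r_A}^2(\bfPhi(t_2))\geq\sigma_{r_A}^2(\bfPhi(t_1))$.

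The assumption $r_A\leq\frac{m}{2}$ enters exactly as in the discrete proof. It ensures that the index $r+1-r_A$ is a valid singular-value index of $\bfPsi(t)$, which has $m-r_A\geq r_A$ rows, so the identity of Lemma \ref{lemma.apdx.sin-cos} applies at $i=r_A$.

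The main obstacle is minor. It lies in justifying rigorously that the Loewner-monotonicity survives integration, and in handling the index bookkeeping in Lemma \ref{lemma.apdx.sin-cos}, for example when $r>m-r_A$. I would address the first by the scalar quadratic-form argument above, which avoids any matrix-valued mean value theorem.
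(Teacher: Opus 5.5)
Your proof is correct and follows essentially the same route as the paper. Both differentiate $\bfPsi(t)\bfPsi(t)^\top$ along the flow to get a negative semidefinite derivative, then transfer the monotonicity to $\sigma_{r_A}^2(\bfPhi(t))$ via Lemma~\ref{lemma.apdx.sin-cos}. Your form $-2\bfPsi(t)\bfTheta(t)\bfPsi(t)^\top$ is the dimensionally correct version of what the paper writes as $-2\bfPsi(t)\A\bfPsi(t)^\top$.

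One small quibble: $r_A\le\frac{m}{2}$ is not what makes the index $r+1-r_A$ valid, since that would instead require $r<m$. If you read $\sigma_j^2(\bfPsi)$ as the $j$-th eigenvalue of the $r\times r$ matrix $\bfPsi^\top\bfPsi=\I_r-\bfPhi^\top\bfPhi$ (zero-padded), the identity and your monotonicity argument hold for every index without that assumption.
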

\begin{proof}
    Given the definition of $\bfPsi(t)$, we have that
    \begin{align*}
        \dot{\bfPsi}(t)&=\U_\perp^\top\dot{\X}(t)\\
        &\stackrel{(a)}{=}-\U_\perp^\top\X(t)\X(t)^\top\A\X(t),
    \end{align*}
    where $(a)$ is from the dynamics of $\X(t)$.

    With this dynamic, we further have
    \begin{align*}
        \frac{\mathsf{d}\bfPsi(t)\bfPsi(t)^\top}{\mathsf{d}t}=-2\bfPsi(t)\A\bfPsi(t)^\top\preceq\bm{0}.
    \end{align*}

    Thus, for any $t_2\geq t_1\geq0$, $\bfPsi(t_2)\bfPsi(t_2)^\top\preceq\bfPsi(t_1)\bfPsi(t_1)^\top$.
    
    To show $\sigma_{r_A}^2(\bfPhi(t_2))\geq\sigma_{r_A}^2(\bfPhi(t_1))$, notice that given $2r_A\leq m$, we have from Lemma \ref{lemma.apdx.sin-cos}, which shows $\sigma_{r_A}^2(\bfPhi(t_2))=1-\sigma_{r+1-r_A}^2(\bfPsi(t_2))$ and $\sigma_{r_A}^2(\bfPhi(t_1))=1-\sigma_{r+1-r_A}^2(\bfPsi(t_1))$. The conclusion is straightforward.
\end{proof}

\begin{lemma}\label{apdx.lemma.diag_lower_bound}
    For any $t\geq0$, $1\leq k\leq r_A$, it is guaranteed that $$\uu_k^\top\X(t)\X(t)^\top\uu_k\geq\sigma_{r_A}(\U^\top\X(0)\X(0)^\top\U).$$
\end{lemma}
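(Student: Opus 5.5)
The plan is to reduce the diagonal entry to a Rayleigh quotient of $\bfPhi(t)\bfPhi(t)^\top$. Since $\uu_k$ is the $k$-th column of $\U$, we have $\uu_k=\U\e_k$ with $\e_k\in\mathbb{R}^{r_A}$ the $k$-th standard basis vector. Hence
\[
\uu_k^\top\X(t)\X(t)^\top\uu_k=\e_k^\top\U^\top\X(t)\X(t)^\top\U\e_k=\e_k^\top\bfPhi(t)\bfPhi(t)^\top\e_k .
\]
By the Courant--Fischer characterization, the Rayleigh quotient of the PSD matrix $\bfPhi(t)\bfPhi(t)^\top\in\mathbb{S}_+^{r_A}$ at a unit vector is at least its smallest eigenvalue. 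That eigenvalue is $\lambda_{r_A}(\bfPhi(t)\bfPhi(t)^\top)=\sigma_{r_A}^2(\bfPhi(t))=\sigma_{r_A}(\U^\top\X(t)\X(t)^\top\U)$. So
\[
\uu_k^\top\X(t)\X(t)^\top\uu_k\ \ge\ \sigma_{r_A}^2(\bfPhi(t)).
\]

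Next I would invoke Lemma \ref{apdx.lemma.singular_ascending_cont} with $t_1=0$ and $t_2=t$. It gives $\sigma_{r_A}^2(\bfPhi(t))\ge\sigma_{r_A}^2(\bfPhi(0))=\sigma_{r_A}(\U^\top\X(0)\X(0)^\top\U)$. Chaining the two inequalities yields the claim for every $t\ge0$ and every $1\le k\le r_A$.

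The only delicate point is the hypothesis $r_A\le m/2$ needed in Lemma \ref{apdx.lemma.singular_ascending_cont}. This holds in every place the present lemma is used, since it is an assumption of Theorem \ref{theorem_saddle_to_saddle} and of Lemma \ref{lemma_saddle_lower_bound}.

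If one wants to avoid that hypothesis, there is a direct argument. Differentiate $\bfPhi(t)\bfPhi(t)^\top$ along the flow \eqref{dynamics_cont}:
\[
\frac{\mathsf{d}}{\mathsf{d}t}\bfPhi\bfPhi^\top=(\I_{r_A}-\bfPhi\bfPhi^\top)\bfSigma\bfPhi\bfPhi^\top+\bfPhi\bfPhi^\top\bfSigma(\I_{r_A}-\bfPhi\bfPhi^\top).
\]
Take a unit eigenvector $\bfv$ attaining the smallest eigenvalue $\lambda$. The derivative of $\lambda$ (in the Dini sense) equals $2\lambda(1-\lambda)\bfv^\top\bfSigma\bfv\ge0$, because $\lambda\in[0,1]$. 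So the smallest eigenvalue is nondecreasing, which again gives the result. I would expect no real obstacle here; the main step is just recognizing the Rayleigh-quotient reduction.
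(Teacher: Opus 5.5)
Your argument is correct and is essentially the paper's proof. The paper likewise combines the monotonicity $\sigma_{r_A}(\U^\top\X(t)\X(t)^\top\U)\geq\sigma_{r_A}(\U^\top\X(0)\X(0)^\top\U)$ from Lemma \ref{apdx.lemma.singular_ascending_cont} with the Rayleigh-quotient bound $\e_k^\top\U^\top\X(t)\X(t)^\top\U\e_k\geq\sigma_{r_A}(\U^\top\X(t)\X(t)^\top\U)$. Your optional direct computation $\bfv^\top\tfrac{\mathsf{d}}{\mathsf{d}t}(\bfPhi\bfPhi^\top)\bfv=2\lambda(1-\lambda)\bfv^\top\bfSigma\bfv\geq0$ on the bottom eigenspace is also valid, and it yields the monotonicity without invoking $r_A\le m/2$.
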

\begin{proof}
    From Lemma \ref{apdx.lemma.singular_ascending_cont}, we have that 
    $$\sigma_{r_A}(\U^\top\X(t)\X(t)^\top\U)\geq\sigma_{r_A}(\U^\top\X(0)\X(0)^\top\U).$$

   Together with 
   \begin{align*}
       \uu_k^\top\X(t)\X(t)^\top\uu_k&=\e_k^\top\U^\top\X(t)\X(t)^\top\U\e_k\\
       &\geq\sigma_{r_A}(\U^\top\X(t)\X(t)^\top\U)\e_k^\top\e_k\\
       &=\sigma_{r_A}(\U^\top\X(t)\X(t)^\top\U),
   \end{align*} the conclusion is straightforward.
\end{proof}

\begin{lemma}\label{apdx.lemma.aux888}
   Let $\A \in \mathbb{R}^{m \times n}$ be a matrix with full column rank and $\B \in \mathbb{R}^{n \times p}$ be a non-zero matrix. Let $\sigma_{\min}(\cdot)$ denote the smallest non-zero singular value. Then it holds that $\sigma_{\min}(\A\B) \geq\sigma_{\min}(\A) \sigma_{\min}(\B)$.
\end{lemma}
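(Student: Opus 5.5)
The plan is to reduce the claim to a statement about the column space of $\A\B$ together with a variational (Courant--Fischer type) characterization of the smallest nonzero singular value restricted to the relevant subspace. Let $k:=\rank(\B)\ge 1$. Since $\A$ has full column rank, the map $\y\mapsto \A\y$ is injective, so $\rank(\A\B)=\rank(\B)=k$. Consequently $\sigma_{\min}(\A\B)=\sigma_k(\A\B)$ and $\sigma_{\min}(\B)=\sigma_k(\B)$. Also $\sigma_{\min}(\A)=\sigma_n(\A)>0$, and this quantity satisfies $\|\A\y\|\ge\sigma_n(\A)\|\y\|$ for every $\y\in\mathbb{R}^n$.

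The main step is to work on the row space. Let $\mathcal{W}:=\range(\B^\top)=\ker(\B)^\perp\subseteq\mathbb{R}^p$, which has dimension $k$. Note that $\ker(\A\B)=\ker(\B)$, again by injectivity of $\A$. Hence $\mathcal{W}$ is also the row space of $\A\B$.

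For any matrix $\M$ of rank $k$ with row space $\mathcal{W}$, the smallest nonzero singular value is
\[
\sigma_k(\M)=\min_{\z\in\mathcal{W},\,\|\z\|=1}\|\M\z\|.
\]
This follows by writing $\M$ in terms of its compact SVD: the right singular vectors span exactly $\mathcal{W}$.

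With this characterization in hand, take any unit $\z\in\mathcal{W}$. Then
\[
\|\A\B\z\|\ \ge\ \sigma_n(\A)\,\|\B\z\|\ \ge\ \sigma_n(\A)\,\sigma_k(\B).
\]
Minimizing the left-hand side over unit $\z\in\mathcal{W}$ gives $\sigma_k(\A\B)\ge\sigma_{\min}(\A)\sigma_{\min}(\B)$, which is the claim.

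The only point needing care is justifying the restricted min characterization and the equality of the kernels. These are what make the indices line up, so that we compare the $k$-th singular values rather than the $p$-th ones, which could be zero. Both follow directly from the compact SVD and from the injectivity of $\A$, so I expect no real obstacle.
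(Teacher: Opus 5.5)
Your proposal is correct and follows essentially the same route as the paper. Both arguments characterize $\sigma_{\min}(\A\B)$ as the minimum of $\|\A\B\z\|$ over unit vectors $\z$ in the row space of $\B$, and then chain $\|\A\B\z\|\ge\sigma_{\min}(\A)\|\B\z\|\ge\sigma_{\min}(\A)\sigma_{\min}(\B)$.

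Your version is in fact cleaner in two places. First, you correctly take the minimization domain to be $\range(\B^\top)$ and justify via $\ker(\A\B)=\ker(\B)$ that it is also the row space of $\A\B$; the paper instead writes $\text{ColSpan}(\B)$, a subspace of $\mathbb{R}^n$ rather than $\mathbb{R}^p$. Second, the paper states its change-of-variables decoupling step as an equality, when it is only an inequality, though in the harmless direction.
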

\begin{proof}
    Using the min-max principle for singular values,
    \begin{align*}
        \sigma_{\min}(\A\B) & = \min_{\| \x \|=1, \x \in \text{ColSpan}(\B)} \| \A\B\x \| \\
        & = \min_{\| \x \|=1, \x \in \text{ColSpan}(\B)} \Big{\|} \A \frac{\B\x }{\| \B\x  \|} \Big{\|} \cdot \| \B\x  \|\\
        & \stackrel{(a)}{=} \min_{\| \x \|=1, \| \y\|=1, \x \in \text{ColSpan}(\B), \y \in \text{ColSpan}(\B)} \| \A\y \|  \cdot \| \B\x  \| \\
        & \geq  \min_{\| \y\|=1, \y \in \text{ColSpan}(\B)} \| \A\y \|  \cdot \min_{\| \x \|=1,  \x \in \text{ColSpan}(\B)}  \| \B\x  \| \\
        & \geq  \min_{\| \y\|=1 } \| \A\y \|  \cdot \min_{\| \x \|=1,  \x \in \text{ColSpan}(\B)}  \| \B\x  \| \\
        & =\sigma_{\min}(\A) \sigma_{\min}(\B),
    \end{align*}
    where $(a)$ is by changing of variables, i.e., $\y = \B\x/ \| \B\x \|$.
\end{proof}

\begin{lemma}[\textbf{Theorem 2.2.1 of \cite{chikuse2012statistics}}]\label{lemma.unitform-st}
	If $\,\Z \in \mathbb{R}^{m \times r}$ has entries drawn i.i.d. from Gaussian distribution ${\mathcal N}(0,1)$, then $\X  = \Z (\Z^\top \Z)^{-1/2}$ is a random matrix uniformly distributed on $\st(m, r)$.	
\end{lemma}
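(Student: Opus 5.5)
The statement to prove is Lemma~\ref{lemma.unitform-st}. The standard route is invariance of the distribution under the orthogonal group together with uniqueness of the invariant measure on $\st(m,r)$.

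\textbf{Step 1: the map is well defined.} Since $\Z$ has i.i.d.\ $\mathcal N(0,1)$ entries and $r\le m$, the matrix $\Z$ has full column rank almost surely. The set of rank-deficient matrices is a proper algebraic subvariety and so has Lebesgue measure zero. Hence $\Z^\top\Z\succ 0$ a.s., and $\X=\Z(\Z^\top\Z)^{-1/2}$ is a.s.\ well defined. It lies on $\st(m,r)$ because
\[
\X^\top\X=(\Z^\top\Z)^{-1/2}\Z^\top\Z(\Z^\top\Z)^{-1/2}=\I_r .
\]

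\textbf{Step 2: left orthogonal invariance.} Fix any orthogonal $\Q\in\mathbb{R}^{m\times m}$. The columns of $\Z$ are i.i.d.\ $\mathcal N(0,\I_m)$, and this law is rotation invariant, so $\Q\Z\overset{d}{=}\Z$. Moreover $(\Q\Z)^\top(\Q\Z)=\Z^\top\Z$, which gives
\[
\Q\X=\Q\Z(\Z^\top\Z)^{-1/2}=(\Q\Z)\big((\Q\Z)^\top\Q\Z\big)^{-1/2}.
\]
This is the same measurable function applied to $\Q\Z$. Therefore $\Q\X\overset{d}{=}\X$ for every $\Q\in O(m)$.

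\textbf{Step 3: uniqueness of the invariant measure.} The group $O(m)$ is compact and acts transitively on $\st(m,r)$: any orthonormal $r$-frame can be completed to an orthogonal matrix, and that matrix maps $[\I_r;\bm 0]$ to the frame. Consequently $\st(m,r)\cong O(m)/O(m-r)$ is a homogeneous space. On such a space there is a unique $O(m)$-invariant Borel probability measure, which by definition is the uniform distribution.

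\textbf{Step 4: proof of uniqueness.} To keep the argument self-contained, let $\nu$ be any invariant probability measure, let $H$ be the Haar probability measure on $O(m)$, and fix $\X_\star\in\st(m,r)$. For a bounded measurable $g$,
\[
\int g\,d\nu=\int\!\!\int g(\Q\X)\,dH(\Q)\,d\nu(\X).
\]
By transitivity write $\X=\Q_\X\X_\star$. Right invariance of $H$ shows the inner integral equals $\int g(\Q\X_\star)\,dH(\Q)$, which does not depend on $\X$. Hence $\nu$ is the pushforward of $H$ under $\Q\mapsto\Q\X_\star$, and this pushforward is unique.

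Combining Steps 2 and 3 gives the claim. The only delicate step is uniqueness, which Step 4 handles through the Haar-averaging argument. Everything else is a direct computation.
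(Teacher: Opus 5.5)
Your proof is correct. The paper does not prove Lemma~\ref{lemma.unitform-st}; it imports it as Theorem~2.2.1 of \cite{chikuse2012statistics}. Your argument therefore gives a self-contained justification where the paper relies on the reference. Its three ingredients are left $O(m)$-invariance of the law of $\X$, transitivity of $O(m)$ on $\st(m,r)$, and uniqueness of the invariant probability measure via Haar averaging.

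Both of your key steps already appear elsewhere in the appendix. The computation $\Q\X=\Q\Z\big((\Q\Z)^\top\Q\Z\big)^{-1/2}$ from your Step~2 is reproduced essentially verbatim in the proof of Lemma~\ref{apdx.lemma.beta_dist}. Your Step~4 conclusion, that the uniform law is the pushforward of Haar measure under $\Q\mapsto\Q\X_\star$, is in its Grassmannian form the fact that proof invokes from Section~1.4.2 of \cite{chikuse2012statistics}.

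A common alternative computes the law of $\X$ through the Jacobian of the polar decomposition $\Z=\X(\Z^\top\Z)^{1/2}$. Compared with it, your argument needs no change of variables. It also applies verbatim to any $\Z$ whose law is invariant under $\Z\mapsto\Q\Z$ and which has full column rank almost surely. Likewise it covers any orthonormalization map with $F(\Q\Z)=\Q F(\Z)$, such as Gram--Schmidt. The Jacobian route is heavier but also yields the independence of $\X$ and $\Z^\top\Z$, and, for non-isotropic Gaussian columns, an explicit density for $\X$; the paper uses neither.

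If the uniform distribution is also required to be invariant under right multiplication by $O(r)$, your uniqueness result covers that too. The right action commutes with the left one, so it maps left-invariant probability measures to left-invariant ones, which must then equal $\nu$.
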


\begin{lemma}\textbf{\textup{\cite{vershynin2010introduction}}}\label{lemma.largest-sigma}
    If $\,\Z \in \mathbb{R}^{m \times r}$ is a matrix whose entries are independently drawn from ${\mathcal N}(0,1)$. Then for every $\tau \geq 0$, with probability at least $1 - \exp(-\tau^2/2)$, we have 
    \begin{align*}
	\sigma_1(\Z) \leq \sqrt{m} + \sqrt{r} + \tau.
    \end{align*}
\end{lemma}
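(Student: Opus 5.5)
This is a standard Gaussian concentration fact; the paper cites \cite{vershynin2010introduction} for it. If I were to prove it rather than cite it, I would use Gordon's inequality for the expectation and Gaussian concentration of Lipschitz functions for the tail.

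\textbf{Expectation bound.} Write
\[
\sigma_1(\Z)=\max_{\uu\in S^{m-1},\,\bfv\in S^{r-1}} \uu^\top\Z\bfv .
\]
This exhibits $\sigma_1(\Z)$ as the supremum of the Gaussian process $X_{\uu,\bfv}=\uu^\top\Z\bfv$. Compare it with the auxiliary process $Y_{\uu,\bfv}=\mathbf{g}^\top\uu+\mathbf{h}^\top\bfv$, where $\mathbf{g}\sim\mathcal N(0,\I_m)$ and $\mathbf{h}\sim\mathcal N(0,\I_r)$ are independent. A direct computation on unit vectors gives
\begin{align*}
\mathbb{E}(X_{\uu,\bfv}-X_{\uu',\bfv'})^2
&=\|\uu\bfv^\top-\uu'\bfv'^\top\|_\fro^2\\
&\le \|\uu-\uu'\|^2+\|\bfv-\bfv'\|^2\\
&=\mathbb{E}(Y_{\uu,\bfv}-Y_{\uu',\bfv'})^2 .
\end{align*}
The inequality reduces to $(1-\uu^\top\uu')(1-\bfv^\top\bfv')\ge0$. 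Slepian's (Sudakov–Fernique) inequality then yields
\[
\mathbb{E}\,\sigma_1(\Z)\le \mathbb{E}\|\mathbf{g}\|+\mathbb{E}\|\mathbf{h}\|\le\sqrt m+\sqrt r ,
\]
where the last step uses Jensen's inequality.

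\textbf{Tail bound.} The map $\Z\mapsto\sigma_1(\Z)$ is $1$-Lipschitz with respect to the Frobenius norm, since $|\sigma_1(\Z)-\sigma_1(\Z')|\le\|\Z-\Z'\|\le\|\Z-\Z'\|_\fro$. The entries of $\Z$ form a standard Gaussian vector in $\mathbb{R}^{mr}$. Gaussian concentration for Lipschitz functions (Tsirelson–Ibragimov–Sudakov) therefore gives
\[
\mathbb{P}\big(\sigma_1(\Z)\ge\mathbb{E}\sigma_1(\Z)+\tau\big)\le\exp(-\tau^2/2).
\]
Combining this with the expectation bound proves the claim.

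The only nontrivial step is verifying the increment comparison for Slepian's inequality, and that reduces to the elementary identity above. Everything else is a direct application of standard tools.
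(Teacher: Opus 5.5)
Your proof is correct. The increment comparison reduces to $2(1-\uu^\top\uu')(1-\bfv^\top\bfv')\ge 0$, as you say, and because the two processes have unequal variances ($1$ versus $2$), the Sudakov--Fernique form you invoke is exactly the comparison needed. The paper gives no proof of its own and simply cites \cite{vershynin2010introduction}, whose argument is the same as yours: Gaussian comparison to get $\mathbb{E}\,\sigma_1(\Z)\le\sqrt m+\sqrt r$, then one-sided concentration of the $1$-Lipschitz map $\Z\mapsto\sigma_1(\Z)$.
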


\begin{lemma}\textbf{\textup{\cite{rudelson2009smallest}}}\label{lemma.smallest-sigma}
    If $\,\Z\in \mathbb{R}^{m \times r}$ is a matrix whose entries are independently drawn from ${\mathcal N}(0,1)$. Suppose that $m \geq r$. Then for every $\tau \geq 0$, we have for two universal constants $C_1 > 0$ and $C_2 > 0$ that
    \begin{align*}
	\mathbb{P}\Big(  \sigma_r(\Z) \leq \tau (\sqrt{m} - \sqrt{r-1})  \Big) \leq (C_1 \tau)^{m -r +1} + \exp(-C_2 m).
    \end{align*}
\end{lemma}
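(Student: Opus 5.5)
We need to prove the last statement: the Rudelson–Vershynin smallest singular value bound for a Gaussian matrix. It is cited from \cite{rudelson2009smallest}, so the plan is to take it as a known result and only sketch the standard argument for completeness.

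The approach follows Rudelson and Vershynin. We bound $\sigma_r(\Z)=\min_{\|\x\|=1}\|\Z\x\|$ from below by splitting the unit sphere $S^{r-1}$ into two classes. The \emph{compressible} vectors are those within a small distance $c_0$ of a $c_1 r$-sparse vector. The \emph{incompressible} vectors are the rest. We then show that $\|\Z\x\|$ is small with only tiny probability on each class separately, and combine the two failure probabilities.

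\textbf{Compressible vectors.} For a fixed unit $\x$, $\Z\x\sim\mathcal N(0,\I_m)$, so $\mathbb P(\|\Z\x\|\le c\sqrt m)\le e^{-c'm}$. We take an $\epsilon$-net of the compressible set, whose cardinality is about $(C/\epsilon)^{c_1 r}$. Combining it with the operator norm bound of Lemma \ref{lemma.largest-sigma}, $\|\Z\|\le 3\sqrt m$ with probability $1-e^{-m/2}$, a union bound gives $\inf_{\mathrm{comp}}\|\Z\x\|\ge c\sqrt m$ except with probability $e^{-C_2m}$. This contributes the $\exp(-C_2 m)$ term in the statement.

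\textbf{Incompressible vectors.} This class carries the $(C_1\tau)^{m-r+1}$ term and is the main obstacle. First we reduce to distances: for incompressible $\x$, a constant fraction of the coordinates satisfy $|x_k|\gtrsim 1/\sqrt r$. This gives
\[
\mathbb P\big(\inf_{\mathrm{incomp}}\|\Z\x\|\le \tau\rho\big)\lesssim \frac{1}{r}\sum_k \mathbb P\big(\mathrm{dist}(\Z_k,H_k)\le C\tau\rho\sqrt r\big),
\]
where $\Z_k$ is the $k$-th column and $H_k$ is the span of the other columns. In the Gaussian case $\mathrm{dist}(\Z_k,H_k)$ is exactly a chi variable with $m-r+1$ degrees of freedom, by rotation invariance and independence from $H_k$. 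Its small-ball probability is $\mathbb P(\chi_{m-r+1}\le s\sqrt{m-r+1})\le (Cs)^{m-r+1}$.

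The delicate point is matching scales. Naively this gives $\tau(\sqrt m-\sqrt{r-1})/\sqrt r$, which is right only when $m-r\gtrsim r$. For nearly square matrices, where $\sqrt m-\sqrt{r-1}\approx (m-r+1)/\sqrt m$, one needs Rudelson–Vershynin's refined argument. That argument applies a random-normal / Littlewood–Offord-type estimate to $\langle \Z_k,\mathbf n\rangle$ restricted to a decomposition into $m-r+1$ approximately independent directions. This is exactly the content of \cite{rudelson2009smallest}, and there we would invoke their theorem rather than reprove it.

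Adding the two failure probabilities yields $\mathbb P(\sigma_r(\Z)\le\tau(\sqrt m-\sqrt{r-1}))\le (C_1\tau)^{m-r+1}+\exp(-C_2m)$, which is the claimed bound.
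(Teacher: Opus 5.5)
Your proposal matches the paper. The paper gives no proof of this lemma and simply cites Rudelson--Vershynin's theorem with $N=m$ and $n=r$; since your argument also rests on invoking that theorem, it is correct as stated.

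One caveat on the heuristic part of your sketch. The single-column distance reduction loses a factor of order $\sqrt{r(m-r+1)/m}$. So it is essentially sharp when $m-r=O(1)$ and off by about $\sqrt r$ when $m-r\gtrsim r$, a regime that a net over all of $S^{r-1}$ already handles. The refined argument is only needed in the intermediate range $1\ll m-r\ll m$, which is not quite the picture your scale discussion gives.
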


\begin{lemma}\label{lemma.phi0}
    If $\,\U \in \st(m, r_A)$ is a fixed matrix, $\X \in \st(m, r)$ is uniformly sampled from $\st(m, r)$ using methods described in Lemma \ref{lemma.unitform-st}, and $r > r_A$, then we have that with probability at least $1 - \exp(-m/2) - (C_1 \tau)^{r -r_A +1} - \exp(-C_2 r)$,
	\begin{align*}
		\sigma_{r_A}(\U^\top \X) \geq  \frac{\tau(r - r_A +1)}{6\sqrt{mr}}.
	\end{align*}
\end{lemma}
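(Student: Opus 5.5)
The plan is to reduce $\U^\top\X$ to a statement about a Gaussian matrix. By Lemma \ref{lemma.unitform-st}, write $\X=\Z(\Z^\top\Z)^{-1/2}$ with $\Z\in\mathbb{R}^{m\times r}$ having i.i.d. $\mathcal N(0,1)$ entries. Then
\[
\U^\top\X=(\U^\top\Z)(\Z^\top\Z)^{-1/2}.
\]
By rotation invariance of the Gaussian distribution, $\U^\top\Z\in\mathbb{R}^{r_A\times r}$ again has i.i.d. $\mathcal N(0,1)$ entries.

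The next step is to lower bound the smallest nonzero singular value of this product. Transposing the matrices in Lemma \ref{apdx.lemma.aux888}, which only changes the roles of left and right factors, gives
\[
\sigma_{r_A}(\U^\top\X)\ \ge\ \sigma_{r_A}(\U^\top\Z)\,\sigma_{\min}\bigl((\Z^\top\Z)^{-1/2}\bigr)=\frac{\sigma_{r_A}(\U^\top\Z)}{\sigma_1(\Z)}.
\]
Here $(\Z^\top\Z)^{-1/2}$ is square and invertible almost surely, so the full-rank hypothesis holds. Alternatively, one can just use $\|\y^\top \M\|\ge \|\y^\top\|\sigma_{\min}(\M)$ for an invertible $\M$.

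The two factors are then controlled separately.
\begin{itemize}
\item \emph{Denominator.} Apply Lemma \ref{lemma.largest-sigma} with deviation parameter $\sqrt m$. This gives $\sigma_1(\Z)\le 2\sqrt m+\sqrt r\le 3\sqrt m$ with probability at least $1-\exp(-m/2)$.
\item \emph{Numerator.} Apply Lemma \ref{lemma.smallest-sigma} to the transpose $(\U^\top\Z)^\top\in\mathbb{R}^{r\times r_A}$, which is tall since $r>r_A$. This gives $\sigma_{r_A}(\U^\top\Z)\ge \tau(\sqrt r-\sqrt{r_A-1})$, except on an event of probability at most $(C_1\tau)^{r-r_A+1}+\exp(-C_2 r)$.
\end{itemize}

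Finally, use the elementary bound
\[
\sqrt r-\sqrt{r_A-1}=\frac{r-r_A+1}{\sqrt r+\sqrt{r_A-1}}\ge \frac{r-r_A+1}{2\sqrt r}.
\]
Combining it with the two bounds above and a union bound yields
\[
\sigma_{r_A}(\U^\top\X)\ge \frac{\tau(r-r_A+1)}{2\sqrt r\cdot 3\sqrt m}=\frac{\tau(r-r_A+1)}{6\sqrt{mr}},
\]
which holds with the claimed probability $1-\exp(-m/2)-(C_1\tau)^{r-r_A+1}-\exp(-C_2 r)$.

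The only delicate point is the product inequality: Lemma \ref{apdx.lemma.aux888} is stated with the full-column-rank factor on the left, so it must be applied after transposing (or replaced by the direct argument above). Everything else is bookkeeping of constants.
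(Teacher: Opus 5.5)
Your proposal is correct and matches the paper's proof step for step. The shared steps are the Gaussian representation $\X=\Z(\Z^\top\Z)^{-1/2}$, the product bound $\sigma_{r_A}(\U^\top\X)\ge\sigma_{r_A}(\U^\top\Z)/\sigma_1(\Z)$, Lemma~\ref{lemma.largest-sigma} with deviation $\sqrt m$ for the denominator, Lemma~\ref{lemma.smallest-sigma} for the numerator, and the same elementary bound on $\sqrt r-\sqrt{r_A-1}$. Your explicit transpositions, which apply Lemma~\ref{apdx.lemma.aux888} with an invertible left factor and Lemma~\ref{lemma.smallest-sigma} to a tall matrix, are slightly more careful than the paper, which applies both lemmas directly to the wide matrix $\U^\top\Z$.
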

\begin{proof}
	Since $\X \in \st(m, r)$ is uniformly sampled from $\st(m, r)$ using methods described in Lemma \ref{lemma.unitform-st}, we can write $\X = \Z(\Z^\top \Z)^{-1/2}$, where $\Z \in \mathbb{R}^{m \times r}$ has entries i.i.d. sampled from ${\mathcal N}(0,1)$. We thus have
	\begin{align*}
		\sigma_{r_A}(\U^\top \X) = \sigma_{r_A}\big(\U^\top \Z(\Z^\top \Z)^{-1/2} \big).
	\end{align*}
	
	We now consider $\U^\top \Z \in \mathbb{R}^{r_A \times r}$. It is clear that the entries of $\U^\top \Z$ are also i.i.d ${\mathcal N}(0,1)$ random variables. As a consequence of Lemma \ref{lemma.smallest-sigma}, we have that with probability at least $1 - (C_1 \tau)^{r -r_A +1} - \exp(-C_2 r)$, 
	\begin{align*}
		\sigma_{r_A}\big(\U^\top \Z \big) \geq \tau (\sqrt{r} - \sqrt{r_A- 1}).
	\end{align*}

	We also have from Lemma \ref{lemma.largest-sigma} that with probability at least $1 - \exp(-m/2)$,
	\begin{align*}
		\sigma_1(\Z^\top \Z) = \sigma_1^2(\Z) \leq (2\sqrt{m} + \sqrt{r})^2.
	\end{align*}

    Taking union bound, we have with probability at least $1 - \exp(-m/2) - (C_1 \tau)^{r -r_A +1} - \exp(-C_2 r)$, 
    \begin{align*}
	\sigma_{r_A}(\U^\top \X) \stackrel{(a)}{\geq} \frac{ \sigma_{r_A}\big(\U^\top \Z) }{\sigma_1(\Z)} = \frac{\tau (\sqrt{r} - \sqrt{r_A- 1})}{2\sqrt{m} + \sqrt{r}} \geq \frac{\tau(r - r_A +1)}{ 3 \sqrt{m} \cdot 2 \sqrt{r}}= \frac{\tau(r - r_A +1)}{6\sqrt{mr}},
    \end{align*}
    where $(a)$ comes from Lemma \ref{apdx.lemma.aux888}.
\end{proof}

\begin{lemma}[\textbf{Theorem 1.5.6 of \cite{muirhead2009aspects}}]\label{apdx.lemma.muirhead}
    Let $\p\sim\mathcal{N}(0,1)^m$. It is guaranteed that $\frac{\p}{\|\p\|}$ is uniformly random on the unit sphere $\mathcal{S}^{m-1}:=\{\x\in\mathbb{R}^m|\|\x\|=1\}$.
\end{lemma}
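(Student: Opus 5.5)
The statement to prove is Lemma \ref{apdx.lemma.muirhead}: if $\p\sim\mathcal{N}(0,1)^m$, then $\p/\|\p\|$ is uniform on $\mathcal{S}^{m-1}$. The plan is to use rotation invariance of the standard Gaussian together with uniqueness of the rotation-invariant probability measure on the sphere.

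\textbf{Step 1: rotation invariance of $\p$.} The density of $\p$ is $(2\pi)^{-m/2}\exp(-\|\x\|^2/2)$, which depends on $\x$ only through $\|\x\|$. For any orthogonal $\Q\in\mathbb{R}^{m\times m}$, the change of variables $\x\mapsto\Q\x$ has Jacobian $|\det\Q|=1$ and preserves norms. Hence $\Q\p$ has the same law as $\p$.

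\textbf{Step 2: transfer to the normalized vector.} Note that $\p\neq\bm{0}$ almost surely, so $\p/\|\p\|$ is well defined. Since $\|\Q\p\|=\|\p\|$, we have
\[
\Q\,\frac{\p}{\|\p\|}=\frac{\Q\p}{\|\Q\p\|}.
\]
By Step 1 the right-hand side has the same law as $\p/\|\p\|$. Therefore the law of $\p/\|\p\|$ is a Borel probability measure on $\mathcal{S}^{m-1}$ that is invariant under every orthogonal transformation.

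\textbf{Step 3: identify the invariant measure.} By definition, the uniform distribution on $\mathcal{S}^{m-1}$ is the normalized surface measure, and it is the unique $O(m)$-invariant Borel probability measure there. This is a standard uniqueness-of-Haar-measure fact for the transitive action of $O(m)$ on the sphere.

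\textbf{Main obstacle.} The only nontrivial point is this uniqueness claim. If a self-contained argument is preferred, I would use polar coordinates. Write $\p=R\,\boldsymbol{\omega}$ with $R=\|\p\|$ and $\boldsymbol{\omega}=\p/\|\p\|$. The Gaussian density in these coordinates factorizes as a function of $R$ alone, times surface measure on $\mathcal{S}^{m-1}$. Consequently $R$ and $\boldsymbol{\omega}$ are independent, and $\boldsymbol{\omega}$ has density proportional to $1$ with respect to surface measure, i.e.\ $\boldsymbol{\omega}$ is uniform. This polar-coordinate computation sidesteps the Haar uniqueness step.
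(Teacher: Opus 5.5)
Your proof is correct. The paper gives no argument of its own for this lemma and simply cites Theorem~1.5.6 of Muirhead. That theorem is stated for an arbitrary spherically symmetric random vector with no atom at the origin. The standard argument behind it is precisely your Steps~1--3: the law of $\p/\|\p\|$ is $O(m)$-invariant because $\Q\p/\|\Q\p\| = \Q(\p/\|\p\|)$ and $\Q\p$ has the same law as $\p$. Normalized surface measure is then the only $O(m)$-invariant Borel probability measure on $\mathcal{S}^{m-1}$. So your main route reproduces what the citation supplies, and it holds for any spherical law, not just the Gaussian.

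Your polar-coordinate fallback is also valid and more hands-on, though it is specific to densities that depend only on the norm. Writing Lebesgue measure as $r^{m-1}\,dr\,d\sigma$ shows directly that $\p/\|\p\|$ has law $\sigma/\sigma(\mathcal{S}^{m-1})$. It also shows that $\p/\|\p\|$ is independent of $\|\p\|$, which is the other half of Muirhead's theorem. The cost is relying on the polar integration formula rather than uniqueness of the invariant measure.
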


\begin{lemma}\label{apdx.lemma.beta_dist}
    Let $\Z\sim\mathcal{N}(0,1)^{m\times r}$ with $m>r$ and define $\X=\Z(\Z^\top\Z)^{-1/2}$. Let $\phi_{i,i}=\uu_i^\top\X\X^\top\uu_i$, $1\leq i\leq r_A$. Then, $\phi_{i,i}\stackrel{d}{=}\frac{\chi_r^2}{\chi_r^2+\chi_{m-r}^2}$, i.e., $\phi_{i,i}\sim\mathsf{Beta}\,(\frac{r}{2},\frac{m-r}{2})$.
\end{lemma}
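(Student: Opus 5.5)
The plan is to reduce the statement to a classical distributional fact about uniformly random subspaces, using rotation invariance. Since $\{\uu_i\}$ is an orthonormal family, complete $\uu_i$ to an orthogonal matrix $\bar{\U}\in\mathbb{R}^{m\times m}$ whose first column is $\uu_i$. Because $\Z$ has i.i.d.\ $\mathcal{N}(0,1)$ entries, $\Z':=\bar{\U}^\top\Z$ has the same distribution as $\Z$. Also $\Z'^\top\Z'=\Z^\top\Z$, so $\bar{\U}^\top\X=\Z'(\Z'^\top\Z')^{-1/2}$. It follows that
\[
\phi_{i,i}=\e_1^\top\bar{\U}^\top\X\X^\top\bar{\U}\e_1\stackrel{d}{=}\e_1^\top\Z(\Z^\top\Z)^{-1}\Z^\top\e_1 .
\]
Hence it suffices to treat $\uu_i=\e_1$. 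Note that $\X\X^\top=\Z(\Z^\top\Z)^{-1}\Z^\top$ is the orthogonal projection $\PP$ onto $\range(\Z)$.

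Next, I swap the roles of the fixed vector and the random subspace. The law of $\range(\Z)$ is invariant under every orthogonal $\Q$, because $\Q\Z\stackrel{d}{=}\Z$. Therefore $\e_1^\top\PP\e_1\stackrel{d}{=}\bfv^\top\PP_0\bfv$, where $\PP_0$ is the projection onto a fixed $r$-dimensional coordinate subspace and $\bfv$ is uniform on the sphere. To justify this, take $\Q$ Haar-distributed and independent of $\Z$, and write $\PP\stackrel{d}{=}\Q\PP_0\Q^\top$; then $\bfv=\Q^\top\e_1$ is uniform on the sphere.

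By Lemma~\ref{apdx.lemma.muirhead}, I can realize $\bfv=\p/\|\p\|$ with $\p\sim\mathcal{N}(0,1)^m$. This gives
\[
\bfv^\top\PP_0\bfv=\frac{\sum_{j=1}^r p_j^2}{\sum_{j=1}^r p_j^2+\sum_{j=r+1}^m p_j^2}.
\]
The numerator is $\chi_r^2$ and the second sum in the denominator is an independent $\chi^2_{m-r}$, which is the claimed ratio. The standard identity that $U/(U+V)\sim\mathsf{Beta}(a/2,b/2)$ for independent $U\sim\chi^2_a$ and $V\sim\chi^2_b$ then yields the Beta law. If I cite that identity, no computation is needed; otherwise it follows from the Gamma--Beta change of variables.

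The main point needing care is the subspace/vector swap. I would make it rigorous through the Haar-matrix coupling described above rather than through informal symmetry, and note that $m>r$ ensures $\Z$ has full column rank almost surely, so that $\PP$ has rank $r$ and the Beta parameters are positive.
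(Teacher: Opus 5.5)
Your proposal is correct and follows essentially the same route as the paper: rotate $\uu_i$ to a standard basis vector using the rotational invariance of $\Z$, then represent the uniformly random subspace $\range(\Z)$ as a Haar rotation of a fixed coordinate subspace. This turns the problem into projecting a uniform unit vector $\p/\|\p\|$ onto $r$ fixed coordinates, from which the ratio $\chi_r^2/(\chi_r^2+\chi_{m-r}^2)$ can be read off. The only difference is cosmetic: you justify the subspace/vector swap with an independent Haar coupling, whereas the paper cites Section~1.4.2 of \cite{chikuse2012statistics}.
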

\begin{proof}
    Since $\uu_i^\top\uu_i=1$, we can always find an orthogonal matrix $\Q$, s.t., $\Q\uu_i=\e_i$.

    Since $\Z\sim\mathcal{N}(0,1)^{m\times r}$, rotational invariance of the Gaussian distribution implies that $\Q\Z\sim\mathcal N(0,1)^{m\times r}$.

    By the construction of $\X$, we obtain $\Q\X=\Q\Z(\Z^\top\Z)^{-1/2}=\Q\Z((\Q\Z)^\top\Q\Z)^{-1/2}$. Therefore, $\X$ and $\Q\X$ share the same distribution.

    Since $\phi_{i,i}=\uu_i^\top\X\X^\top\uu_i=(\Q\uu_i)^\top(\Q\X)(\Q\X)^\top(\Q\uu_i)=\e_i^\top(\Q\X)(\Q\X)^\top\e_i$, without loss of generality, we just need to consider $\hat{\phi}_{i,i}:=\e_i^\top\X\X^\top\e_i$.

    From Lemma \ref{lemma.unitform-st}, it follows that $\X$ is uniformly random on $\st(m,r)$. Then, the random subspace $\Span(\X)$ has a uniform distribution on the Grassmannian $\gr(m,r)=\{\mathcal{S}\subset\mathbb{R}^m|\mathsf{dim}(\mathcal{S})=r\}$.

    As stated in Section 1.4.2 of \cite{chikuse2012statistics}, there exists a random orthogonal matrix $\bfO$, which is drawn uniformly random from the orthogonal group $\mathcal{O}(m)$ in the Haar measure, s.t., $\Span(\X)$ shares the same distribution with $\bfO\Xi$, where $\Xi=\Span\{\e_1,\e_2,\ldots,\e_r\}$.

    Thus, $\hat{\phi}_{i,i}=\|\mathsf{Proj}_{\Span(\X)}(\e_i)\|^2\stackrel{d}{=}\|\mathsf{Proj}_{\bfO\Xi}(\e_i)\|^2=\|\mathsf{Proj}_{\Xi}(\bfO^\top\e_i)\|^2$. By the invariance of Haar measure, we have that $\vv:=\bfO^\top\e_i$ is uniformly random on the unit sphere $\mathcal{S}^{m-1}:=\{\x\in\mathbb{R}^m|\|\x\|=1\}$.

    From Lemma \ref{apdx.lemma.muirhead}, it follows that $\vv\stackrel{d}{=}\frac{\p}{\|\p\|}$, where $\p\sim\mathcal{N}(0,1)^m$. Combining this and the distribution of $\hat{\phi}_{i,i}$ obtained above, we arrive at $\hat{\phi}_{i,i}\stackrel{d}{=}\sum_{j=1}^{r}\vv_j^2\stackrel{d}{=}\sum_{j=1}^{r}\frac{\p_j^2}{\|\p\|^2}$, where $\vv_j,\p_j$ are the $j$-th entries of $\vv$ and $\p$, respectively.

    Since $\p_1,\p_2,\ldots,\p_m$ are i.i.d. standard Gaussian, we have that $$\sum_{j=1}^r\p_j^2\sim\chi_r^2,\quad\sum_{j=r+1}^m\p_j^2\sim\chi_{m-r}^2.$$ Therefore, $\phi_{i,i}\stackrel{d}{=}\hat{\phi}_{i,i}\stackrel{d}{=}\frac{\chi_r^2}{\chi_r^2+\chi_{m-r}^2}$, i.e., $\phi_{i,i}\sim\mathsf{Beta}(\frac{r}{2},\frac{m-r}{2})$.
\end{proof}

\begin{lemma}\label{apdx.lemma.exp_markov}
    Let $X,Y$ be two random variables and $X\perp Y$. Then, $P(X-Y\geq0)\leq E(e^{\lambda X})E(e^{-\lambda Y})$ holds for all $\lambda>0$.
\end{lemma}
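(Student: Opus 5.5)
Since $X$ and $Y$ are independent and $\lambda>0$, I will prove this with the exponential Markov (Chernoff) inequality applied to $X-Y$.

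First I rewrite the event. Because $t\mapsto e^{\lambda t}$ is strictly increasing for $\lambda>0$, we have $\{X-Y\geq0\}=\{e^{\lambda(X-Y)}\geq1\}$. The variable $e^{\lambda(X-Y)}$ is nonnegative, so Markov's inequality gives
\[
P(X-Y\geq0)=P\big(e^{\lambda(X-Y)}\geq1\big)\leq E\big(e^{\lambda(X-Y)}\big)=E\big(e^{\lambda X}e^{-\lambda Y}\big).
\]

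Next I use independence. Since $X\perp Y$, the variables $e^{\lambda X}$ and $e^{-\lambda Y}$ are measurable functions of independent variables, so they are independent. They are also nonnegative. Hence $E(e^{\lambda X}e^{-\lambda Y})=E(e^{\lambda X})E(e^{-\lambda Y})$, and this factorization holds even when some of these expectations are $+\infty$. Combining the two displays gives the claim.

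The only subtlety is integrability. Both Markov's inequality and the product formula for expectations of nonnegative independent variables remain valid in $[0,\infty]$, so no finiteness assumption is needed. If either factor is infinite, the bound holds trivially. There is no real obstacle here.
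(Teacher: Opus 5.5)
Your proof is correct and follows essentially the same route as the paper: rewrite the event as $\{e^{\lambda(X-Y)}\geq 1\}$, apply Markov's inequality, and factor the expectation using independence. Your remark that both Markov's inequality and the product formula hold in $[0,\infty]$ is a harmless refinement that the paper leaves implicit; there is no $0\cdot\infty$ ambiguity because both factors are strictly positive.
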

\begin{proof}
    Since $\lambda>0$, we have that $X-Y\geq0$ if and only if $e^{\lambda(X-Y)}\geq1$. Then, $P(X-Y\geq0)=P(e^{\lambda(X-Y)}\geq1)\stackrel{(a)}{\leq} E(e^{\lambda(X-Y)})$, where $(a)$ is by Markov's inequality.

    From $X\perp Y$, we can write $E(e^{\lambda(X-Y)})$ as $E(e^{\lambda X})E(e^{-\lambda Y})$.

    Therefore, $P(X-Y\geq0)\leq E(e^{\lambda X})E(e^{-\lambda Y})$.
\end{proof}

\begin{lemma}\label{apdx.lemma.log_approx}
    Assuming $-\frac{1}{4}\leq x\leq\frac{1}{4}$, then $\log(1+x)\geq x-x^2$.
\end{lemma}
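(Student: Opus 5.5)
The plan is to prove the elementary inequality $\log(1+x)\geq x-x^2$ on $[-\frac14,\frac14]$ by a single-variable calculus argument. Define
\[
h(x):=\log(1+x)-x+x^2,\qquad x\in[-\tfrac14,\tfrac14].
\]
This is well defined because $1+x\geq\frac34>0$. Note that $h(0)=0$, so it suffices to show that $x=0$ is the global minimizer of $h$ on the interval.

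The first step is to compute the derivative:
\[
h'(x)=\frac{1}{1+x}-1+2x=\frac{1-(1+x)+2x(1+x)}{1+x}=\frac{x(1+2x)}{1+x}.
\]
The second step is a sign analysis on the interval. There $1+x>0$ and $1+2x\geq\frac12>0$, so $h'(x)$ has the same sign as $x$. It follows that $h$ is nonincreasing on $[-\frac14,0]$ and nondecreasing on $[0,\frac14]$. Hence $h(x)\geq h(0)=0$ for all such $x$, which is exactly the claim.

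As a cross-check, I could instead use the Taylor expansion with Lagrange remainder. This gives $\log(1+x)=x-\frac{x^2}{2(1+\xi)^2}$ for some $\xi$ between $0$ and $x$. Since $(1+\xi)^2\geq\frac{9}{16}$, we get $\frac{1}{2(1+\xi)^2}\leq\frac89\leq1$, which again yields the bound.

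There is no real obstacle here. The only point needing care is that the factor $1+2x$ stays positive, which holds because $x\geq-\frac14>-\frac12$; this is where the lower endpoint of the interval is used. The upper endpoint $\frac14$ is not actually needed, since the inequality holds for all $x>-\frac12$.
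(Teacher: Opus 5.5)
Your proof is correct and follows essentially the same route as the paper. Both define $\log(1+x)-x+x^2$, compute its derivative $\frac{x(1+2x)}{1+x}$, and use the sign change at $x=0$ to show that the minimum on the interval is attained at $0$, where the function vanishes; your Lagrange-remainder cross-check and your remark that only $x>-\frac12$ is actually needed are both valid as well.
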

\begin{proof}
    Define $f(x):=\log(1+x)-x+x^2$, $x\in[-\frac{1}{4},\frac{1}{4}]$.

    The derivative of $f(x)$ is $f'(x)=\frac{1}{1+x}-1+2x=\frac{x+2x^2}{1+x}$. Since $x+2x^2\begin{cases}
        <0,\,x\in[-\frac{1}{4},0)\\
        \geq0,\, x\in[0,\frac{1}{4}]
    \end{cases}$, we have that $f(x)\geq f(0)=0$.
\end{proof}

\begin{lemma}\label{apdx.lemma.init_r_m}
    Let $\Z\sim\mathcal{N}(0,1)^{m\times r}$, $\X=\Z(\Z^\top\Z)^{-1/2}$, $\phi_{i,i}=\uu_i^\top\X\X^\top\uu_i$, $1\leq i\leq r_A$, $m\geq 12r$, $\varepsilon=\frac{1}{2}$.

    Then, $P(\max\limits_{1\leq i\leq r_A}|\phi_{i,i}-\frac{r}{m}|\geq\varepsilon\cdot\frac{r}{m})\leq2r_A\exp(-c_3\varepsilon^2r)$ for some universal constant $c_3>0$.
\end{lemma}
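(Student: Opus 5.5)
By Lemma \ref{apdx.lemma.beta_dist}, each $\phi_{i,i}$ has the same distribution as $\frac{S}{S+T}$, where $S\sim\chi_r^2$ and $T\sim\chi_{m-r}^2$ are independent. So I will prove a two-sided tail bound for a single ratio and then take a union bound over $i=1,\ldots,r_A$. The union bound gives the factor $r_A$. The factor $2$ comes from the two tails. The target is therefore
\[
P\Big(\tfrac{S}{S+T}\geq(1+\varepsilon)\tfrac rm\Big)\leq\exp(-c_3\varepsilon^2 r)
\quad\text{and}\quad
P\Big(\tfrac{S}{S+T}\leq(1-\varepsilon)\tfrac rm\Big)\leq\exp(-c_3\varepsilon^2 r).
\]

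\textbf{Upper tail.} The event $\frac{S}{S+T}\geq a$ with $a=(1+\varepsilon)\frac rm$ is equivalent to $(1-a)S-aT\geq0$. I apply Lemma \ref{apdx.lemma.exp_markov} with $X=(1-a)S$ and $Y=aT$, using the chi-square moment generating function $E e^{\lambda\chi_k^2}=(1-2\lambda)^{-k/2}$ for $\lambda<\tfrac12$. This gives
\[
P\leq\big(1-2\lambda(1-a)\big)^{-r/2}\big(1+2\lambda a\big)^{-(m-r)/2}.
\]
I will choose $\lambda$ of order $\varepsilon$, for instance $\lambda=\varepsilon/8$, so that all the logarithm arguments lie in $[-\tfrac14,\tfrac14]$. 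Since $\varepsilon=\tfrac12$ and $a\leq\tfrac{3r}{2m}\leq\tfrac18$ (using $m\geq12r$), this holds. Lemma \ref{apdx.lemma.log_approx} then gives a quadratic lower bound on each $\log(1+x)$. The exponent becomes
\[
-\tfrac r2\log\big(1-2\lambda(1-a)\big)-\tfrac{m-r}{2}\log(1+2\lambda a).
\]
The linear terms combine to $\lambda\big[r(1-a)-(m-r)a\big]=\lambda(r-ma)=-\lambda\varepsilon r$. The quadratic terms are $O(\lambda^2 r)$, because $(m-r)a^2\lesssim r^2/m\leq r$. With $\lambda=c\varepsilon$ for a small constant $c$, the linear term dominates, and the exponent is at most $-c'\varepsilon^2 r$.

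\textbf{Lower tail.} This is symmetric. The event is $aT-(1-a)S\geq0$ with $a=(1-\varepsilon)\frac rm$. I apply Lemma \ref{apdx.lemma.exp_markov} with $X=aT$ and $Y=(1-a)S$, and use the same MGF and logarithm estimates. This requires $2\lambda a\leq\tfrac14$, which holds easily. The linear term is $\lambda\big[(m-r)a-r(1-a)\big]=\lambda(ma-r)=-\lambda\varepsilon r$, and the quadratic remainder is again $O(\lambda^2 r)$.

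\textbf{Main obstacle.} The delicate step is the bookkeeping in the quadratic remainder. The $(m-r)$ term carries a large coefficient, but it is multiplied by $a^2\sim r^2/m^2$. Together with $m\geq12r$, this is what keeps the remainder of order $\lambda^2 r$ rather than $\lambda^2 m$. It is also what justifies the range conditions needed to invoke Lemma \ref{apdx.lemma.log_approx}.

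Once the constants are fixed, set $c_3$ to the resulting universal constant divided by $\varepsilon^2$; this is legitimate because $\varepsilon=\tfrac12$ is fixed. The union bound over $i$ and over the two tails then gives $2r_A\exp(-c_3\varepsilon^2 r)$.
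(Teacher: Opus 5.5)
Your proposal is correct and takes essentially the same route as the paper. Both write $\phi_{i,i}$ as $\chi^2_r/(\chi^2_r+\chi^2_{m-r})$, bound each tail by a Chernoff argument via Lemma \ref{apdx.lemma.exp_markov} and the chi-square MGF, apply the estimate of Lemma \ref{apdx.lemma.log_approx}, and union-bound over the two tails and over $i$. The only difference is the choice of $\lambda$: you fix the small value $\lambda=\varepsilon/8$ and note that the linear term is exactly $-\lambda\varepsilon r$, whereas the paper uses the stationary point $\lambda^*=\frac{m}{10m-12r}$ for the upper tail and lets $\lambda\to\frac12^-$ for the lower tail, which yields the explicit exponents $r/50$ and $r/20$.
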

\begin{proof}
    From Lemma \ref{apdx.lemma.init_r_m}, we have that $\phi_{i,i}\stackrel{d}{=}\frac{\chi_r^2}{\chi_r^2+\chi_{m-r}^2}$.
    
    Let $X\sim\chi_r^2,Y\sim\chi_{m-r}^2,X\perp Y$. Then, $\phi_{i,i}=\frac{X}{X+Y}$. We use Chernoff bounds to prove this.

    For the up tail, $\phi_{i,i}\geq(1+\varepsilon)\frac{r}{m}$ if and only if $\frac{X}{X+Y}\geq(1+\varepsilon)\frac{r}{m}$, i.e., $(1-\alpha)X-\alpha Y\geq0$, where $\alpha:=(1+\varepsilon)\frac{r}{m}\leq\frac{1}{2}$. This is equivalent to $X-\frac{\alpha}{1-\alpha}Y\geq0$.

    For any $\lambda\in(0,\frac{1}{2})$, $\lambda\cdot\frac{\alpha}{1-\alpha}<\frac{1}{2}$, we can obtain that 
    \begin{align*}
        P(X-\frac{\alpha}{1-\alpha}Y\geq0)&=P(e^{\lambda(X-\frac{\alpha}{1-\alpha}Y)}\geq1)\\
        &\stackrel{(a)}{\leq}E(e^{\lambda X})E(e^{-\lambda\cdot\frac{\alpha}{1-\alpha}Y})\\
        &=(1-2\lambda)^{-\frac{r}{2}}(1+2\lambda\cdot\frac{\alpha}{1-\alpha})^{-\frac{m-r}{2}},
    \end{align*}
    where $(a)$ is from Lemma \ref{apdx.lemma.exp_markov}.

    Let $g(\lambda):=(1-2\lambda)^{-\frac{r}{2}}(1+2\lambda\cdot\frac{\alpha}{1-\alpha})^{-\frac{m-r}{2}}$. A direct calculation of the derivative shows that the stationary point of $\log(g(\lambda))$ is $\lambda^*=\frac{\frac{\alpha}{1-\alpha}(m-r)-r}{2[r+\frac{\alpha}{1-\alpha}(m-r)]}\in(0,\frac{1}{2})$. 
    
    Substituting $\varepsilon=\frac{1}{2}$ in, it follows that $\frac{\alpha}{1-\alpha}=\frac{3r}{2m-3r}$, which leads to $\lambda^*=\frac{m}{10m-12r}\in[\frac{1}{10},\frac{1}{9}]$.

    Expanding the expression of $\log(g(\lambda))$, we have that
    \begin{align*}
        \log(g(\lambda^*))&=-\frac{r}{2}\log(1-2\lambda^*)-\frac{m-r}{2}\log(1+\lambda^*\cdot\frac{3r}{2m-3r})\\
        &\stackrel{(b)}{\leq}-\frac{r}{2}[-2\lambda^*-4(\lambda^*)^2]-\frac{m-r}{2}[2\lambda^*\cdot\frac{3r}{2m-3r}-4(\lambda^*\cdot\frac{3r}{2m-3r})^2]\\
        &=-\frac{r}{2}[(-2+\frac{6(m-r)}{2m-3r})\lambda^*+(-4-\frac{36r(m-r)}{(2m-3r)^2})(\lambda^*)^2]\\
        &\stackrel{(c)}{\leq}-\frac{r}{2}[\lambda^*-5(\lambda^*)^2]\\
        &\stackrel{(d)}{\leq}-\frac{r}{50},
    \end{align*}
    where $(b)$ is from Lemma \ref{apdx.lemma.log_approx}; $(c)$ is by $m\geq12r$; and $(d)$ follows from $\lambda^*\in[\frac{1}{10},\frac{1}{9}]$.

    Thus, the up tail bound is $P(\phi_{i,i}\geq\frac{3r}{2m})\leq e^{-\frac{r}{50}}$.

    For the down tail, $\phi_{i,i}\leq(1-\varepsilon)\frac{r}{m}$ if and only if $\frac{X}{X+Y}\leq(1-\varepsilon)\frac{r}{m}$, i.e., $(1-\beta)X-\beta Y\leq0$, where $\beta:=(1-\varepsilon)\frac{r}{m}\leq\frac{1}{2}$. This is equivalent to $X-\frac{\beta}{1-\beta}Y\leq0$.

    For any $\lambda\in(0,\frac{1}{2})$, $\lambda\cdot\frac{\beta}{1-\beta}<\frac{1}{2}$, we can obtain that 
    \begin{align*}
        P(X-\frac{\beta}{1-\beta}Y\leq0)&=P(e^{\lambda(-X+\frac{\beta}{1-\beta}Y)}\geq1)\\
        &\stackrel{(e)}{\leq}E(e^{-\lambda X})E(e^{\lambda\cdot\frac{\beta}{1-\beta}Y})\\
        &=(1+2\lambda)^{-\frac{r}{2}}(1-2\lambda\cdot\frac{\beta}{1-\beta})^{-\frac{m-r}{2}},
    \end{align*}
    where $(e)$ is from Lemma \ref{apdx.lemma.exp_markov}.

    Let $h(\lambda):=(1+2\lambda)^{-\frac{r}{2}}(1-2\lambda\cdot\frac{\beta}{1-\beta})^{-\frac{m-r}{2}}$.

    A direct calculation of the derivative shows that the stationary point of $\log(h(\lambda))$ is $\hat{\lambda}=\frac{r-\frac{\beta}{1-\beta}(m-r)}{\frac{2\beta}{1-\beta}m}>0$.

    Substituting $\varepsilon=\frac{1}{2}$ in, it follows that $\frac{\beta}{1-\beta}=\frac{r}{2m-r}$, which leads to $\hat{\lambda}=\frac{1}{2}$.

    Taking the limitation as $\lambda\to\frac{1}{2}^{-}$, we have that 
    \begin{align*}
        \lim\limits_{\lambda\to\frac{1}{2}^-}h(\lambda)&=-\frac{\log(2)}{2}r-\frac{m-r}{2}\log(1-\frac{r}{2m-r})\\
        &\stackrel{(f)}{\leq}-\frac{\log(2)}{2}r-\frac{m-r}{2}(-\frac{r}{2m-r}-\frac{r^2}{(2m-r)^2})\\
        &=-\frac{r}{2}[\log(2)-\frac{m-r}{2m-r}-\frac{r(m-r)}{(2m-r)^2}]\\
        &\stackrel{(g)}{\leq}-\frac{r}{2}[\log(2)-\frac{1}{2}-\frac{1}{20}]\\
        &\leq-\frac{r}{20},
    \end{align*}
    where $(g)$ is from Lemma \ref{apdx.lemma.log_approx}; and $(f)$ is by $m\geq12r$.

    Thus, the down tail bound is $P(\phi_{i,i}\leq\frac{r}{2m})\leq e^{-\frac{r}{20}}$.

    Then, $P(|\phi_{i,i}-\frac{r}{m}|\geq\frac{r}{2m})=P(\phi_{i,i}\geq\frac{3r}{2m})+P(\phi_{i,i}\leq\frac{r}{2m})\leq e^{-\frac{r}{50}}+e^{-\frac{r}{20}}\leq2e^{-\frac{r}{50}}$.

    Therefore, $P(\max\limits_{1\leq i\leq r_A}|\phi_{i,i}-\frac{r}{m}|\geq\frac{r}{2m})\leq\sum_{i=1}^{r_A} P(|\phi_{i,i}-\frac{r}{m}|\geq\frac{r}{2m})\leq2r_A\exp(-\frac{r}{50})$.
\end{proof}

\begin{lemma}\label{apdx.lemma.init_vec}
    Let $\uu\in\mathbb{R}^m,m\geq3$, $\uu^\top\uu=1$. Let $\z\sim\mathcal{N}(0,1)^m$, and define $\x=\z(\z^\top\z)^{-1/2}$. It is guaranteed that $P(\uu^\top\x\x^\top\uu\geq\frac{1}{cm})\geq1-\frac{2}{\sqrt{cm}\cdot\mathsf{Beta}(\frac{1}{2},\frac{m-1}{2})}$ for any constant $c>1$.
\end{lemma}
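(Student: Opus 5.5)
We reduce the statement to an explicit one-dimensional distributional computation and then bound a small-ball probability. Because $\z$ is standard Gaussian, Lemma \ref{apdx.lemma.muirhead} shows that $\x=\z/\|\z\|$ is uniform on $\mathcal{S}^{m-1}$. By rotational invariance, as in the proof of Lemma \ref{apdx.lemma.beta_dist}, we may take $\uu=\e_1$. Then $\uu^\top\x\x^\top\uu=x_1^2\stackrel{d}{=}\frac{p_1^2}{\|\p\|^2}\stackrel{d}{=}\frac{\chi_1^2}{\chi_1^2+\chi_{m-1}^2}$, so $x_1^2\sim\mathsf{Beta}(\frac12,\frac{m-1}{2})$. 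This is exactly the special case $r=1$ of Lemma \ref{apdx.lemma.beta_dist}.

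Next we bound the lower tail directly from the Beta density
\[
f(s)=\frac{s^{-1/2}(1-s)^{\frac{m-3}{2}}}{\mathsf{Beta}(\frac12,\frac{m-1}{2})},\qquad s\in(0,1).
\]
Since $m\geq3$, the exponent $\frac{m-3}{2}$ is nonnegative, so $(1-s)^{\frac{m-3}{2}}\leq1$ on $(0,1)$. Setting $a:=\frac{1}{cm}<1$, this gives
\[
P\Big(x_1^2<\tfrac{1}{cm}\Big)\leq\frac{1}{\mathsf{Beta}(\frac12,\frac{m-1}{2})}\int_0^{a}s^{-1/2}\,ds=\frac{2\sqrt{a}}{\mathsf{Beta}(\frac12,\frac{m-1}{2})}=\frac{2}{\sqrt{cm}\,\mathsf{Beta}(\frac12,\frac{m-1}{2})}.
\]
Taking complements yields the claimed bound. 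Note that the event $\{x_1^2=\frac{1}{cm}\}$ has probability zero, so it does not matter whether the inequality is strict.

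The argument has no real obstacle. The only point requiring care is the distributional identification, which we import from Lemma \ref{apdx.lemma.beta_dist} or rederive via Lemma \ref{apdx.lemma.muirhead}. We may also remark that $\mathsf{Beta}(\frac12,\frac{m-1}{2})\asymp\sqrt{2\pi/m}$, so the failure probability is $\mathcal{O}(1/\sqrt{c})$. This constant-probability guarantee is the one used in the proof of Theorem \ref{theorem_DMD_rank1}.
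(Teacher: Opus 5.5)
Your proposal is correct and follows the paper's proof essentially step for step: you identify $\uu^\top\x\x^\top\uu\sim\mathsf{Beta}(\frac12,\frac{m-1}{2})$ via Lemma \ref{apdx.lemma.beta_dist} (the case $r=1$), drop the factor $(1-s)^{\frac{m-3}{2}}\le 1$ using $m\ge 3$, and integrate $s^{-1/2}$ up to $\frac{1}{cm}$. Your closing remark that the failure probability is $\mathcal{O}(1/\sqrt{c})$ matches the paper's final Gautschi-inequality step, which makes this explicit as $\sqrt{2/(\pi c)}$.
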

\begin{proof}
    From Lemma \ref{apdx.lemma.beta_dist}, we have that $y:=\uu^\top\x\x^\top\uu\sim\mathsf{Beta}(\frac{1}{2},\frac{m-1}{2})$.

    For any $\varepsilon\in(0,1)$, $P(y\leq\varepsilon)=\frac{1}{\mathsf{Beta}(\frac{1}{2},\frac{m-1}{2})}\mathlarger{\int}_{0}^{\varepsilon}y^{-\frac{1}{2}}(1-y)^{\frac{m-3}{2}}\,\text{d}y$.

    Since $m\geq3$ and $y\leq\varepsilon$, it follows that
    \begin{align*}
        P(y\leq\varepsilon)\leq\frac{1}{\mathsf{Beta}(\frac{1}{2},\frac{m-1}{2})}\mathlarger{\int}_{0}^{\varepsilon}y^{-\frac{1}{2}}\,\text{d}y=\frac{2\varepsilon^{\frac{1}{2}}}{\mathsf{Beta}(\frac{1}{2},\frac{m-1}{2})}.
    \end{align*}

    Substituting $\varepsilon=\frac{1}{cm}$ in, we arrive at $P(y\leq\frac{1}{cm})\leq\frac{2}{\sqrt{cm}\cdot\mathsf{Beta}(\frac{1}{2},\frac{m-1}{2})}$.

    Note that $\mathsf{Beta}(\frac{1}{2},\frac{m-1}{2})=\sqrt{\pi}\frac{\Gamma(\frac{m-1}{2})}{\Gamma(\frac{m}{2})}$, and by Gautschi's inequality, we have that
    \begin{align*}
        \sqrt{\frac{m}{2}-1}\leq\frac{\Gamma(\frac{m}{2})}{\Gamma(\frac{m-1}{2})}\leq\sqrt{\frac{m}{2}}.
    \end{align*}
    Then, $\frac{1}{\mathsf{Beta}(\frac{1}{2},\frac{m-1}{2})}\leq\frac{\sqrt{\frac{m}{2}}}{\sqrt{\pi}}$, which leads to $P(y\geq\frac{1}{cm})\geq1-\sqrt{\frac{2}{\pi c}}$.
\end{proof}

    \begin{lemma}\label{apdx.lemma.sin_x_y}
        Assume that $\x,\y\in\mathbb{R}^m$ are two unit vectors, i.e., $\x^\top\x=\y^\top\y=1$. Let $\X=\x\x^\top,\Y=\y\y^\top\in\mathbb{S}^m$. Then, $\|\X-\Y\|=\sqrt{1-|\x^\top\y|^2}$.
    \end{lemma}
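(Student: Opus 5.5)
The claim is that for unit vectors $\x,\y$ the spectral norm of $\X-\Y=\x\x^\top-\y\y^\top$ equals $\sqrt{1-|\x^\top\y|^2}$. My approach is to reduce to a $2\times 2$ eigenvalue computation on $\Span\{\x,\y\}$.

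First I dispose of the degenerate case. If $\x$ and $\y$ are parallel, then $\y=\pm\x$, so $\X=\Y$ and both sides vanish.

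Otherwise $\x,\y$ span a two-dimensional subspace $\mathcal{S}$, and $\X-\Y$ is symmetric with range in $\mathcal{S}$, so it is zero on $\mathcal{S}^\perp$. I write $c:=\x^\top\y$ and pick an orthonormal basis of $\mathcal{S}$: $\e_1=\x$ and $\e_2=(\y-c\x)/\sqrt{1-c^2}$. In this basis $\y=c\,\e_1+s\,\e_2$ with $s=\sqrt{1-c^2}$, so the restriction of $\X-\Y$ is
\[
\begin{bmatrix}1&0\\0&0\end{bmatrix}-\begin{bmatrix}c^2&cs\\cs&s^2\end{bmatrix}
=\begin{bmatrix}s^2&-cs\\-cs&-s^2\end{bmatrix}.
\]

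This matrix has trace $0$ and determinant $-s^4-c^2s^2=-s^2(s^2+c^2)=-s^2$. Hence its eigenvalues are $\pm s$. Since $\X-\Y$ is symmetric, its spectral norm is the largest absolute eigenvalue, which is $s=\sqrt{1-|\x^\top\y|^2}$.

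An alternative route, avoiding coordinates, is to compute $(\X-\Y)^2$ directly. Expanding gives
\[
(\X-\Y)^2=\X+\Y-c(\x\y^\top+\y\x^\top),
\]
and one checks that this acts on $\mathcal{S}$ as $(1-c^2)\I$. Either way there is no real obstacle; the only point needing care is the parallel case, where the basis vector $\e_2$ is undefined, and it is handled separately above.
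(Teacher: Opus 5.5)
Your proof is correct and follows essentially the paper's route: the paper also handles $|\x^\top\y|=1$ separately, then works on the span of $\x$ and $(\y-c\x)/\sqrt{1-c^2}$ and shows that $(\X-\Y)^2$ acts there as $(1-c^2)$ times the identity, which is exactly your ``alternative route.'' Your primary trace/determinant computation on the $2\times 2$ block is only a minor variant; it also shows that the eigenvalues of $\X-\Y$ itself are $\pm\sqrt{1-c^2}$, rather than only those of its square.
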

    \begin{proof}
        We first consider the case that $|\x^\top\y|=1$.

        Since $\x$ and $\y$ are unit vectors, it follows that $\x=\y$ or $\x=-\y$, which guarantees that $\|\X-\Y\|=0$. Thus, we just need to consider cases that $|\x^\top\y|<1$.
        
        Let $c:=\x^\top\y\in(-1,1)$. Since $\x$ and $\y$ are unit vectors, we have that 
        \begin{align*}
            \X^2=\X,\quad\Y^2=\Y.
        \end{align*}
        Let $\A:=\X-\Y\in\mathbb{S}^m$. Then, $\|\A\|=\sqrt{\lambda_{max}(\A^\top\A)}=\sqrt{\lambda_{max}(\A^2)}$.

        Directly expanding the expression of $\A$, we obtain
        \begin{align*}
            \A^2&=(\X-\Y)^2\\
            &=\X^2+\Y^2-\X\Y-\Y\X\\
            &=\X+\Y-\X\Y-\Y\X.
        \end{align*}
        Together with $\X\Y=\x\x^\top\y\y^\top=c\x\y^\top$ and $\Y\X=\y\y^\top\x\x^\top=c\y\x^\top$, it follows that
        \begin{align*}
            \A^2=\X+\Y-c(\x\y^\top+\y\x^\top).
        \end{align*}
        For any $\uu\in\mathbb{R}^m$, $\uu\perp\x,\y$, we have that
        $\X\uu=\x\x^\top\uu=\bm{0}$ and $\Y\uu=\y\y^\top\uu=\bm{0}$. Then, $\A^2\uu=\bm{0}$, which means that $\A^2$ can only have non-zero eigenvalues on the space $\Span\{\x,\y\}$.

        Let $\z=\frac{\y-c\x}{\sqrt{1-c^2}}\in\mathbb{R}^m$. We can easily verify that $\z\perp\x$ and $\z\in\Span\{\x,\y\}$. Thus, $\Span\{\x,\y\}=\Span\{\x,\z\}$.

        Through simple calculation, we obtain
        \begin{align*}
            \A^2\x=(1-c^2)\x,\quad\A^2\uu=(1-c^2)\uu.
        \end{align*}
        Thus, the non-zero eigenvalue of $\A^2$ is $1-c^2$, which means that
        \begin{align*}
            \|\X-\Y\|=\|\A\|=\sqrt{\lambda_{max}(\A^2)}=\sqrt{1-c^2}=\sqrt{1-|\x^\top\y|^2}.
        \end{align*}
        This completes the proof.
    \end{proof}
    
\end{document}